\newcommand{\beqn}{\begin{equation}}
\newcommand{\eeqn}{\end{equation}}
\newcommand{\cS}{\mathcal{S}}
\newcommand{\cD}{\mathcal{D}}
\newcommand{\bR}{\mathbb{R}}
\newcommand{\bd}{{\boldsymbol d}}
\newcommand{\bv}{{\boldsymbol v}}
\newcommand{\bp}{{\boldsymbol p}}
\newcommand{\bw}{{\boldsymbol w}}
\newcommand{\bC}{{\bf C}}
\newcommand{\bB}{{\bf B}}
\newcommand{\bL}{\mathbb{L}}
\newcommand{\bx}{\boldsymbol{x}}
\newcommand{\bn}{\boldsymbol{n}}
\newcommand{\by}{\boldsymbol{y}}
\newcommand{\bh}{\boldsymbol{h}}
\newcommand{\eps}{\varepsilon}
\newcommand{\sinb}{\sin{(\pi \beta)}}
\newcommand{\sint}{\sin{(\theta)}}
\newcommand{\cost}{\cos{(\theta)}}
\newcommand{\bbmat}{\begin{bmatrix}}
\newcommand{\ebmat}{\end{bmatrix}}
\newcommand{\spt}[1]{\sin{(\pi #1)}}
\newcommand{\sptsq}[1]{\sin^2{(\pi #1)}}
\newcommand{\sinsq}[1]{\sin^2{(#1)}}
\newcommand{\Adir}{\mathcal{A}_{\text{dir}}}
\newcommand{\Aneu}{\mathcal{A}_{\text{neu}}}
\newcommand{\Kdir}{\mathcal{K}_{\text{dir}}}
\newcommand{\Kneu}{\mathcal{K}_{\text{neu}}}
\newcommand{\snneu}{S^{\text{neu}}_{N}}
\newcommand{\tsnneu}{\tilde{S}^{\text{neu}}_{N}}
\newcommand{\bCdiag}{\boldsymbol C_{\text{diag}}}
\newcommand{\kjer}{\boldsymbol k_{\textrm{AB}}}
\newcommand{\smin}{s_{\text{min}}}
\newcommand{\smax}{s_{\text{max}}}
\newtheorem{definition}{Definition}[section]
\newtheorem{remark}{Remark}[section]
\newtheorem{conjecture}{Conjecture}[section]
\newtheorem{theorem}{Theorem}[section]
\newtheorem{lemma}{Lemma}[section]
\title{On the solution of Laplace's equation in the vicinity of
triple-junctions}
\author{Jeremy Hoskins\thanks{
Applied Mathematics Program, Yale University, USA.\\ 
email: jeremy.hoskins@yale.edu} ,\, 
Manas Rachh\thanks{Center for Computational Mathematics,
Flatiron Institute, USA. \\
email: mrachh@flatironinstitute.org}}
\date{}
\begin{document}

\maketitle
%%%%%%%%%%%%%%%%%%%%%%%%%%%%%%%%%%%%%%%%%%%%%%%%%%%%%%%%%%%%
\abstract{
In this paper we characterize the behavior of solutions to systems of boundary 
integral equations associated with Laplace transmission problems 
in composite media consisting of regions with polygonal boundaries. In particular 
 we consider triple junctions, i.e. points at which three distinct
 media meet. We show that, under suitable conditions, solutions to the boundary integral equations in the vicinity 
 of a triple junction are well-approximated by linear combinations of functions of the form $t^\beta,$ where $t$ is the distance of the point from the junction and the powers $\beta$ depend only on the material properties of the media and the angles at which their boundaries meet. Moreover, we use this analysis to design 
 efficient discretizations of boundary integral equations for Laplace transmission problems 
 in regions with triple junctions and demonstrate the accuracy and efficiency of this algorithm 
 with a number of examples.
}

% flatex input: [01intro.tex]
\section{Introduction \label{sec:intro}}

Composite media, i.e.  media consisting of multiple materials in close proximity 
or contact, are both ubiquitous in nature and fascinating in applications since 
their macroscopic properties can be substantially different than those of 
their components. One property of particular interest is the electrostatic 
response of composite media, typically the  
electric potential in the medium which is produced by an externally-applied 
time-independent electric field. 
In such situations one often assumes that the associated electric 
potential satisfies 
Laplace's equation in the interior of each medium and that along each edge
where two media meet one prescribes the jump in the normal derivative of the 
potential. 
Typically the potentials in these jump relations appear multiplied by 
coefficients depending on the electric permittivity. 
This leads to a collection of coupled partial differential equations (PDEs). In addition to classical electrostatics problems, the same equations also arise in, among other things, percolation theory, homogenization theory, and 
the study field enhancements in vacuum insulators
(see, for example, ~\cite{lee2008pseudo,fredkin2003resonant,milton2002theory,tully2007electrostatic,tuncer2002dielectric,fel2000isotropic,ovchinnikov2004conductivity}).

Using classical potential theory this set of partial differential equations (PDEs) can be 
reduced to a system of second-kind boundary integral equations (BIEs). 
In particular, the solution to the PDE in each region is represented as a 
linear combination of a single-layer and a double-layer potential on the 
boundary of each subregion. 
If the edges of the media are smooth then the corresponding kernels in the 
integral equation are as well. Near corners, however, the solutions to both the differential equations
 and the integral equations can 
develop singularities. 

Analytically, the behavior of solutions to both the PDEs and BIEs have
been the subject of extensive analysis
(see, for example
~\cite{craster2004three,keller1987effective,helsing1991transport,chung2005theoretical,schachter1998analytic,berggren2001new,techaumnat2002effect,afanas2004computation,greengard2012stable,claeys2015second}). 
In particular, the existence and uniqueness of solutions in an $\mathbb{L}^2$-sense 
is well-known, under certain natural assumptions on the material 
properties \cite{claeys2017second,mclean2000strongly}. 
Moreover, the asymptotic form of the singularities in the vicinity of a 
junction has been determined for the solutions of both the PDE and its 
corresponding BIE  \cite{chung2005theoretical,schachter1998analytic,craster2004three,milton1981transport,helsing2011effective}. 

Computationally the singular nature of the solutions poses significant 
challenges for many existing numerical methods for solving both the PDEs 
and BIEs. 
Typical approaches involve introducing many additional degrees of freedom 
near the junctions which can impede the speed of the solver and impose 
prohibitive limits on the size and complexity of geometries which can 
be considered. 
{\it Recursive compressed inverse preconditioning (RCIP)} is one way of circumventing the 
difficulty introduced by the presence of junctions in the BIE formulation~\cite{helsing2013solving}. 
In this approach, the extra degrees of freedom introduced by the refinement 
near the junctions are eliminated from the linear system. 
Moreover, the compression and refinement are performed concomitantly 
for multiple junctions in parallel. 
This approach gives an algorithm which scales linearly 
in the number of degrees of freedom added to resolve the singularities 
near the junction. 
The resulting linear system has essentially the same number of 
degrees of freedom as it would if the junctions were absent.

In this paper we restrict our attention to the case of triple junctions, 
extending the existing analysis by showing that under suitable restrictions 
the solution to the BIEs can be well-approximated in the vicinity of a 
triple junction by a linear combination of $t^{\beta_{j}}$, where
$t$ is the distance from the triple junction and the $\beta_{j}$'s are a countable 
collection of real numbers defined implicitly by an equation 
depending only on the angles at which the interfaces meet and the material 
properties of the corresponding media. 
This analysis enables the construction of an efficient computational 
algorithm for solving Laplace's equation in regions with multiple junctions. 
In particular, using this representation we construct an accurate and 
efficient quadrature scheme for the BIE which requires no refinement 
near the junction. 
The properties of this discretization are illustrated with a number 
of numerical examples.

This paper is organized as follows. 
In~\cref{sec:boundary} we
state the boundary value problem for the Laplace triple junction transmission problem, 
summarize relevant properties of layer potentials, and describe the reduction
 of the boundary value problem to a system of boundary integral equations.
In~\cref{sec:mainres} we present the main theoretical results
of this work, the proofs of which 
are given in~\cref{sec:appendix-dir,sec:appendix-neu}. 
In~\cref{sec:conj} we discuss two conjectures extending the results of~\cref{sec:mainres} 
based on extensive numerical evidence.
In~\cref{sec:numdis}, we describe a Nystr\"{o}m discretization which exploits explicit knowledge of the structure
of solutions to the integral equations in the vicinity of triple junctions, and
in~\cref{sec:numres} we demonstrate its effectiveness of numerical solvers. 
Finally, in~\cref{sec:conc} we summarize the results and outline 
directions for future research.

% flatex input end: [01intro.tex]

%%%%%%%%%%%%%%%%%%%%%%%%%%%%%%%%%%%%%%%%%%%%%%%%%%%%%%%%%%%%
% flatex input: [02boundary.tex]
 \section{Boundary value problem \label{sec:boundary}}

Consider a composite medium consisting of a set of $n$ polygonal domains 
$\Omega_1,\dots,\Omega_n$ (see Figure \ref{fig:comp_fig}) with boundaries 
consisting  of $m$ edges $\Gamma_1,\dots,\Gamma_m$ and $k$ vertices 
$\bv_1,\dots,\bv_k$. 
For a given edge $\Gamma_i$ let $L_i$ denote its length, 
$\bn_i$ its normal,  $\ell(i),r(i)$ the polygons to the left and right, 
respectively, and $\gamma_i$ be an arclength parameterization of $\Gamma_i$.
Finally we denote the union of the regions $\Omega_1,\dots,\Omega_n$ by $\Omega$ 
and denote the complement of $\Omega$ by $\Omega_0.$

Given positive constants $\mu_1,\dots,\mu_n$ and $\nu_1,\dots,\nu_n$ 
we consider the following boundary value problem
\begin{equation}
\label{eqn_BVP}
\begin{aligned}
  \Delta u_i &= 0\, \,\,\,\quad x \in \Omega_i,\,\,i=0,1,2,\dots,n,\\
  \mu_{\ell(i)} u_{\ell(i)} - \mu_{r(i)} u_{r(i)} &= f_i, \quad x \in \Gamma_i, \,\,i=1,\dots,m,\\
    \nu_{\ell(i)} \frac{\partial u_{\ell(i)}}{\partial n_i} - \nu_{r(i)}\frac{\partial u_{r(i)}}{\partial n_i} &= g_i, \quad x \in \Gamma_i, \,\,i=1,\dots,m,\\
    \lim_{|r|\to\infty} \left( r \log(r) u_0'(r) - u_0(r)\right)& = 0,
  \end{aligned}
\end{equation}
  where $f_i$ and $g_i$ are analytic functions on $\Gamma_i,$ $i=1,\dots,m$,
  and $\ell(i),r(i)$ denote the regions on the left and right with respect
  to the normal of edge $\Gamma_{i}$.

\begin{remark}
In this work we assume that all the normals $\bn_1,\dots,\bn_m$ to $\Gamma_1,\dots,\Gamma_m$ are
positively oriented with respect
to the parameterization $\gamma_{i}(t)$ 
of the edge $\Gamma_{i}$. 
Specifically, if $\Gamma_{i}$ is a line segment between vertices
$\bv_{\ell}$, $\bv_{r}$, and  $\gamma_{i}(t): [0,L_{i}] \to \Gamma_{i}$ is a parameterization
of $\Gamma_{i}$, given by
\begin{equation}
\gamma_{i}(t) = \bv_{\ell} + t\frac{\bv_{r}-\bv_{\ell}}{\|\bv_r-\bv_\ell \|} \, .
\end{equation}
Then the normal on edge $\Gamma_{i}$, is given by
\begin{equation}
\bn_{i} = \frac{(\bv_{r}-\bv_{\ell})^{\perp}}{\|\bv_r-\bv_\ell \|} \, ,
\end{equation}
where for a point $\bx=(x_{1},x_{2}) \in \mathbb{R}^{2}$,  
$\bx^{\perp} = (x_{2},-x_{1})$.
\end{remark}

\begin{figure}
\centering
{ \includegraphics[width=4cm]{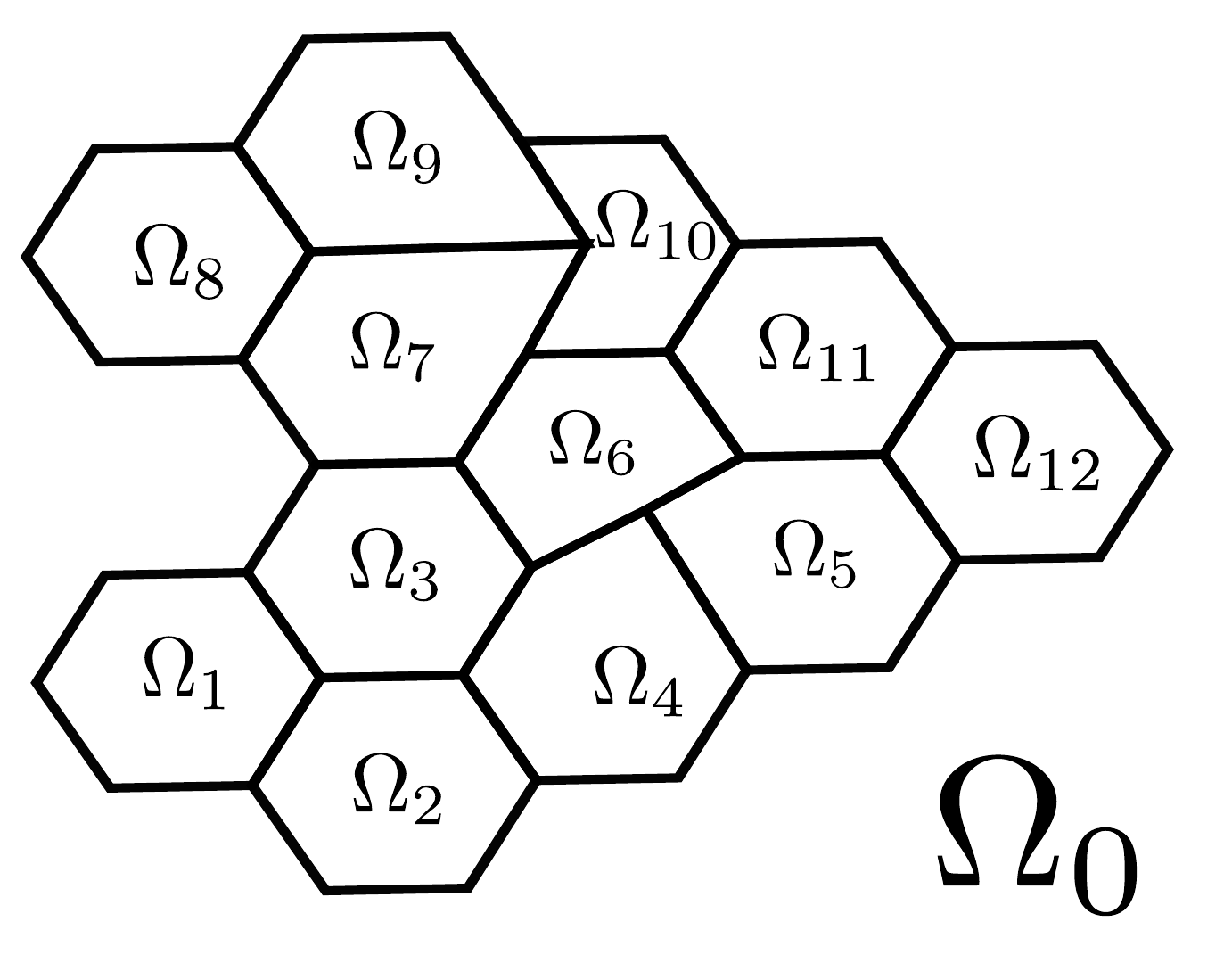} } \caption{Example of a composite region}\label{fig:comp_fig}
\end{figure}

\begin{remark}
The existence and uniqueness of solutions to (\ref{eqn_BVP}) 
is a classical result~\cite{mclean2000strongly}.
\end{remark}
\begin{remark}
In this paper we assume that no more than three edges meet at each vertex. 
Similar analysis holds for domains with higher-order junctions and will 
be published at a later date.
\end{remark}
\begin{remark}
Here we assume that $\mu_1,\dots,\mu_n$, and $\nu_1,\dots,\nu_n$ are 
positive constants. 
In principle the analysis presented here extends to the case where the 
constants are negative or complex provided the the constants $(\mu_{j} \nu_{i} + \mu_{i} \nu_{j})/(\mu_{j} \nu_{i} - \mu_{i} \nu_{j})$ across each edge are outside the closure of the essential spectrum of the double layer potential
defined on the boundary, and the underlying differential equation admits a unique solution.
Note that for non-negative coefficients this is always true, since these constants are all in magnitude greater than $1$, and
the spectral radius of the double layer potential is bounded by $1$.
\end{remark}

 \subsection{Layer potentials}
Before reducing the boundary value problem (\ref{eqn_BVP}) to a boundary integral equation we first introduce the layer potential operators and summarize their relevant properties.
\begin{definition}
Given a density $\sigma$ defined on $\Gamma_i,$ $i=1,\dots,m,$ the single-layer potential is defined by
\beqn
\cS_{\Gamma_i}[\sigma](\by) = -\frac{1}{2\pi} \int_{\Gamma_i} \log{\|\bx - \by\|} \sigma(\bx) dS_{\bx}
\eeqn
and the double-layer potential is defined via the formula
\beqn
\cD_{\Gamma_i}[\sigma](\by) = \frac{1}{2\pi} \int_{\Gamma_i} \frac{\bn(\bx) \cdot( \by-\bx)}{\|\bx-\by\|^2}\sigma(\bx) dS_{\bx},
\eeqn
\end{definition}

\begin{remark}
In light of the previous definition, evidently the adjoint of the 
double-layer potential is given by the formula
\beqn
\cD^*_{\Gamma_i}[\sigma](\by) = \frac{1}{2\pi} \int_{\Gamma_i} \frac{\bn(\by) \cdot( \bx-\by)}{\|\bx-\by\|^2}\sigma(\bx) dS_{\bx},
\eeqn
\end{remark}
\begin{definition}
For $\bx\in \Gamma$ we define the kernel $K(\bx,\by)$ by
\begin{equation}
K(\bx,\by) =\frac{1}{2\pi}  \frac{\bn(\bx) \cdot( \by-\bx)}{\|\bx-\by\|^2} \, .
\end{equation}
\end{definition}

The following theorems describe the limiting values of the single
and double layer potential on the boundary $\Gamma_{i}$.

\begin{theorem}\label{thm_reps}
Suppose that $\bx_{0}$ is a point in the interior of the segment 
$\Gamma_{i}$.
Suppose the point $\bx$ approaches a point $\bx_0$ 
along a path such that 
\begin{align}
-1+\alpha<\frac{\bx-\bx_0}{\|\bx-\bx_0\|} \cdot \gamma_i'(t_0) <1-\alpha
\end{align}
for some $\alpha >0$. 
If $(\bx - \bx_{0}) \cdot \bn_{i} < 0$,
we will refer to this limit as $\bx \to \bx_{0}^{-}$,
and if $(\bx-\bx_{0}) \cdot \bn_{i} > 0$, 
we will refer to this limit as $\bx \to \bx_{0}^{+}$.

Then
\begin{align}
&\lim_{\bx \to \bx_0^{\pm}} \cS_{\Gamma_i}[\sigma](\bx) =\, \cS_{\Gamma_i}[\sigma](\bx_0)\\
&\lim_{\bx \to \bx_0^{\pm}} \cD_{\Gamma_i}[\rho](\bx) = {\rm p.v.}\, \cD_{\Gamma_i}[\rho](\bx_0) \mp \frac{\rho(\bx_0)}{2}\\
&\lim_{\bx \to \bx_0^{\pm}} \bn_{i} \cdot \nabla \cS_{\Gamma_i}[\rho](\bx) ={\rm p.v.}\, \cD^*_{\Gamma_i}[\rho](\bx_0) \pm \frac{\rho(\bx_0)}{2},
\end{align}
where p.v. refers to the fact that the principal value of the integral 
should be taken.

Moreover, both the limits
\begin{align}
\lim_{\bx \to \bx_0^{\pm}} \bn_{i} \cdot \nabla \cD_{\Gamma_i}[\rho](\bx) ,
\end{align}
exist and are equal.
\end{theorem}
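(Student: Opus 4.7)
The first three identities are the classical jump relations for a smooth interior point of a flat segment, and I sketch them only briefly. For the single layer one uses the splitting $\sigma = \sigma(\bx_0) + (\sigma-\sigma(\bx_0))$ combined with local integrability of the logarithm: the first piece is uniformly continuous in $\by$ across $\bx_0$ and the remainder is controlled by dominated convergence using the regularity of $\sigma$. For the double layer the same splitting reduces the computation to evaluating $\cD_{\Gamma_i}[1](\by)$, which on the flat segment $\Gamma_i$ equals $(2\pi)^{-1}$ times the signed angle subtended by $\Gamma_i$ at $\by$; this angle tends to $\pm\pi$ as $\by$ approaches a smooth interior point from either side, producing the $\mp\rho(\bx_0)/2$ jump. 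The jump for $\bn_i \cdot \nabla \cS_{\Gamma_i}[\rho]$ then follows from the identical argument applied to the adjoint kernel.

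For the final, hypersingular claim, I introduce local coordinates in which $\Gamma_i$ lies on the $s$-axis, $\bx_0 = \gamma_i(t_0)$, and the target point takes the form $\by = \bx_0 + h\,\bn_i$. In these coordinates the double layer becomes
\begin{equation*}
\cD_{\Gamma_i}[\rho](\by) \;=\; \frac{1}{2\pi}\int_0^{L_i}\frac{h}{(s-t_0)^2 + h^2}\,\rho(\gamma_i(s))\,ds,
\end{equation*}
and a single differentiation in $h$, which equals $\bn_i \cdot \nabla_\by$, yields
\begin{equation*}
\bn_i \cdot \nabla_\by \cD_{\Gamma_i}[\rho](\by) \;=\; \frac{1}{2\pi}\int_0^{L_i}\frac{(s-t_0)^2 - h^2}{\bigl((s-t_0)^2 + h^2\bigr)^2}\,\rho(\gamma_i(s))\,ds.
\end{equation*}
The decisive structural observation is that this kernel is an \emph{even} function of $h$. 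Hence once I establish that the one-sided limit as $h \to 0^+$ exists, the $h \to 0^-$ limit automatically exists and takes the same value; flatness of the segment is precisely what enforces this parity.

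Existence of the limit I obtain from the identity $(u^2-h^2)/(u^2+h^2)^2 = -(d/du)\bigl(u/(u^2+h^2)\bigr)$ with $u = s-t_0$. Integrating by parts transfers one derivative onto $\rho$ and produces
\begin{equation*}
\bn_i \cdot \nabla_\by \cD_{\Gamma_i}[\rho](\by) \;=\; -\frac{1}{2\pi}\!\left[\frac{s-t_0}{(s-t_0)^2 + h^2}\,\rho(\gamma_i(s))\right]_{s=0}^{s=L_i} + \frac{1}{2\pi}\int_0^{L_i}\frac{s-t_0}{(s-t_0)^2+h^2}\,\frac{d}{ds}\rho(\gamma_i(s))\,ds.
\end{equation*}
Because $\bx_0$ is interior to $\Gamma_i$, the boundary terms are regular in $h$ at $h = 0$. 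For the remaining Cauchy-type integral, I split $(d/ds)\rho(\gamma_i(s))$ into its value at $s = t_0$ plus a remainder: the constant piece evaluates to $\tfrac{1}{2}\log\bigl(((L_i - t_0)^2 + h^2)/(t_0^2 + h^2)\bigr)$, which has a finite $h \to 0$ limit, while the remainder is absolutely integrable thanks to the Hölder regularity of $\rho$. Extending this argument to paths with $t \to t_0$ and $h\to 0$ (rather than $t=t_0$) uses the same integration-by-parts representation together with continuity of the resulting Cauchy principal value under tangential perturbations.

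The main obstacle is the $\|\bx-\by\|^{-2}$ singularity of the normal derivative of $K(\bx,\by)$, which rules out dominated convergence applied directly to the original integrand. The two techniques that circumvent it are (i) the parity in $h$, which comes for free from the geometry and immediately gives equality of the two one-sided limits, and (ii) the integration by parts, which reduces the hypersingular integral to a Cauchy principal value whose boundary behavior is classical.
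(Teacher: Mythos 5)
The paper gives no proof of this theorem: it is invoked as the classical set of jump relations for harmonic layer potentials, so there is no argument of the authors' to match yours against. Your blind proof is, in outline, the standard direct computation on a flat segment, and it is in the same spirit as the explicit straight-edge evaluations the paper does carry out later for its main results (cf.~\cref{lem:dledge}); the reduction of the hypersingular term via the identity $(u^{2}-h^{2})/(u^{2}+h^{2})^{2}=-\tfrac{d}{du}\bigl(u/(u^{2}+h^{2})\bigr)$, followed by splitting off the value of the derivative of the density at $t_{0}$, is correct and is the usual way to prove the last claim. Three points should be tightened. First, the final statement is not true for a merely continuous (or $\mathbb{L}^{2}$) density: your integration by parts and the subsequent splitting require $\rho\circ\gamma_{i}$ to be $C^{1}$ near $t_{0}$ with H\"older (or Dini) continuous \emph{derivative} --- you write ``H\"older regularity of $\rho$'' where what you actually use is H\"older regularity of $\rho'$; since the theorem leaves the hypothesis implicit, you should state it. Second, your parity-in-$h$ argument only treats approach along the normal, and the extension to general non-tangential paths is dispatched in one sentence; to close it, observe that after the integration by parts the kernel $\frac{s-t}{(s-t)^{2}+h^{2}}$ is the real part of the Cauchy kernel $1/(s-t-ih)$, whose non-tangential boundary values for H\"older densities exist and carry no jump (Plemelj), which both justifies the tangential perturbation step and makes the parity remark redundant. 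Third, in the double-layer jump your one-line ``subtended angle tends to $\pm\pi$'' leaves the sign bookkeeping untracked; with the paper's kernel $\bn(\bx)\cdot(\by-\bx)/\|\bx-\by\|^{2}$ and the orientation of $\bn_{i}$ fixed in the paper's remark on parameterization, you should verify explicitly which side produces which sign, since the direction of the $\mp\rho(\bx_{0})/2$ jump is exactly the kind of convention-dependent detail that propagates into the derivation of the integral equations.
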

\begin{remark}
In the following we will suppress the ${\rm p.v.}$ from expressions 
involving layer potentials evaluated at a point on the boundary. 
Unless otherwise stated, in such cases the principal value should always 
be taken.
\end{remark}

\subsection{Integral representation}
In classical potential theory the boundary value problem (\ref{eqn_BVP}) 
is reduced to a boundary integral equation for a new collection of 
unknowns  $\rho_i, \sigma_i \in \mathbb{L}^2(\Gamma_i)$, $i=1,\dots,m$ 
related to $u_{i}: \Omega_{i} \to \mathbb{R}$, $i=1,\dots,n$ 
in the following manner
\beqn
\label{eq:udef}
u_{i}(\bx) = \frac{1}{\mu_{i}} \sum_{j=1}^m \cS_{\Gamma_{j}}[\rho_{j}](\bx) 
+ \frac{1}{\nu_{i}} \sum_{j=1}^m 
\cD_{\Gamma_{j}}[\sigma_{j}](\bx) \, \quad \bx \in \Omega_{i}.
\eeqn
We note that by construction $u_i$ is harmonic in $\Omega_i$, 
$i=0,1,\dots,n$. 
Enforcing the jump conditions across the edges and applying~\cref{thm_reps} 
yields the following system of integral equations for the unknown 
densities $\rho_i$ and $\sigma_i$
\begin{align}
-\frac{1}{2}
\sigma_{i} +\frac{\mu_{r(i)}\nu_{\ell(i)} - \mu_{\ell(i)}\nu_{r(i)}}{\mu_{r(i)}\nu_{\ell(i)} + \mu_{\ell(i)}\nu_{r(i)}}  \sum_{\ell=1}^m \cD_{\Gamma_{\ell}}[\sigma_{\ell}] &= \frac{\nu_{\ell(i)}\nu_{r(i)} f_{i}}{\mu_{r(i)}\nu_{\ell(i)} + \mu_{\ell(i)}\nu_{r(i)}} 
\label{eq:inteq1} \\
-\frac{1}{2}
\rho_{i} +\frac{\mu_{r(i)}\nu_{\ell(i)} - \mu_{\ell(i)}\nu_{r(i)}}{\mu_{r(i)}\nu_{\ell(i)} + \mu_{\ell(i)}\nu_{r(i)}}  \sum_{\ell=1}^m \cD^{\ast}_{\Gamma_{\ell}}[\rho_{\ell}] &= -\frac{\mu_{\ell(i)}\mu_{r(i)} g_{i}}{\mu_{r(i)}\nu_{\ell(i)} + \mu_{\ell(i)}\nu_{r(i)}} \, , \label{eq:inteq2}
\end{align}
 for $i=1,\dots,m.$

We note that the preceding representation has several advantages. 
Firstly, the kernels of integral equations (\ref{eq:inteq1}) and 
(\ref{eq:inteq2}) are smooth except at the vertices. 
In particular, the weakly-singular and hypersingular terms arising from 
the single-layer potential and the derivative of the double-layer 
potential, respectively, are absent. 
Secondly, the equations for the single-layer density $\rho$ and the 
double-layer density $\sigma$ are completely decoupled and can be analyzed 
separately. 
Moreover, (\ref{eq:inteq2}) is the adjoint of (\ref{eq:inteq1}) and hence 
the structure of solutions to (\ref{eq:inteq2}) can be inferred from the 
behavior of solutions to (\ref{eq:inteq1}). 

\begin{remark}
The above representation also appears in \cite{helsing2011effective} 
and is related to the 
work in \cite{greengard2012stable}. 
It has been shown in \cite{claeys2017second} that
the boundary integral 
equations (\ref{eq:inteq1}),(\ref{eq:inteq2}) are 
well-posed for $f_i,g_i \in \mathbb{L}^2[\Gamma_i]$. 
\end{remark}

\subsection{The single-vertex problem}
The following lemma reduces the problem of analyzing the behavior of 
the densities $\rho$ and $\sigma$ in the vicinity of a triple junction 
with locally-analytic data to the analysis of an integral equation on a 
set of three intersecting line segments.  
\begin{lemma}
Let $\sigma,\rho$ satisfy the boundary integral 
equation (\ref{eq:inteq1}) and (\ref{eq:inteq2}), respectively. 
Consider three edges $\Gamma_i,$ $\Gamma_j,$ and $\Gamma_k$ meeting at a vertex 
$v_p$. 
If $\bx_p$ denotes the coordinates of the vertex $v_m$ 
then there exists an $r>0$ such that
\begin{equation}
 \int_{\Gamma \setminus B_r(\bx_p)} K(\bx,\by) \sigma(\bx) \,dS_{\bx},\quad\quad  \int_{\Gamma \setminus B_r(\bx_p)} K(\by,\bx) \rho(\bx) \,dS_{\bx},
\end{equation}
are analytic functions of $\by$ for all $\by \in B_r(\bx_p)$. 
Here $B_r(\bx_p)$ denotes the ball of radius $r$ centered at $\bx_p$.
\end{lemma}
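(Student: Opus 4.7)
The plan is to choose $r>0$ so small that the integration domain $\Gamma \setminus B_r(\bx_p)$ is strictly separated from the ball $B_r(\bx_p)$, and then establish analyticity of the integral in $\by$ by complex extension of the kernel. I would first fix $r$ small enough that the closed ball $\overline{B_r(\bx_p)}$ contains no vertex other than $\bx_p$ and intersects $\Gamma$ only in the initial portions of the three segments $\Gamma_i, \Gamma_j, \Gamma_k$ incident at $\bx_p$; this is possible because there are only finitely many vertices and edges, so the distance from $\bx_p$ to every other vertex and every non-incident edge is strictly positive.

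With this choice fixed, let $\by_0 \in B_r(\bx_p)$ be arbitrary and set $d_0 := r - \|\by_0 - \bx_p\| > 0$. The triangle inequality immediately gives $\|\bx - \by_0\| \geq d_0$ for every $\bx \in \Gamma \setminus B_r(\bx_p)$. Next, I would complexify: view $\by = (y_1,y_2) \in \mathbb{C}^2$ and extend the quantity $\|\bx-\by\|^2 := (x_1-y_1)^2 + (x_2-y_2)^2$ as a polynomial in $\by$. By continuity and compactness of $\Gamma$, there exists a polydisc $U_{\mathbb{C}} \subset \mathbb{C}^2$ centered at $\by_0$ on which $\left|(x_1-y_1)^2 + (x_2-y_2)^2\right| \geq d_0^2/2$ uniformly in $\bx \in \Gamma \setminus B_r(\bx_p)$. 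On $U_{\mathbb{C}}$ the extended kernel $K(\bx,\by)$ is therefore holomorphic in $\by$ and bounded uniformly in $\bx$. Combining this uniform bound with Cauchy--Schwarz (using $\sigma \in \mathbb{L}^2(\Gamma)$) yields absolute integrability of the integrand for each $\by \in U_{\mathbb{C}}$, and Fubini together with Morera's theorem then show that $\int_{\Gamma \setminus B_r(\bx_p)} K(\bx,\by)\sigma(\bx)\,dS_{\bx}$ is holomorphic on $U_{\mathbb{C}}$, hence real-analytic at $\by_0$. Since $\by_0$ was arbitrary, the integral is real-analytic throughout $B_r(\bx_p)$.

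The adjoint integral $\int K(\by,\bx)\rho(\bx)\,dS_{\bx}$ is handled identically, provided one fixes a convention for $\bn(\by)$ on the two-dimensional set $B_r(\bx_p)$: taking $\bn(\by)$ to be the constant normal of whichever segment $\by$ lies on (or, for $\by$ off $\Gamma$, any polynomial-in-$\by$ extension, e.g.\ a fixed vector) keeps the kernel rational in $\by$ with the same denominator, so the preceding complex-analytic argument transfers verbatim. I expect the only real obstacle to be the geometric step: one must verify that $r$ really can be chosen so that $B_r(\bx_p)$ isolates a clean local picture of the three straight segments meeting at $\bx_p$ and excludes every other vertex. Once that is in place, the analytic content amounts to a routine holomorphic-parameter integration argument.
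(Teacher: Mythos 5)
Your argument is correct: the paper states this lemma without giving a proof, treating it as an immediate consequence of the fact that the kernel $K$ is analytic away from its singularity, and your write-up supplies exactly that standard argument (separation $\|\bx-\by\|\geq r-\|\by-\bx_p\|>0$ between source and target, holomorphic extension of the rational kernel in $\by$ on a small polydisc, uniform bound plus Cauchy--Schwarz with $\sigma,\rho\in\mathbb{L}^{2}$, and Morera/Fubini to conclude holomorphy, hence real-analyticity). Your handling of the two side issues is also sound: analyticity is a local property, so the $\by_0$-dependent separation constant is harmless, and for the adjoint kernel the normal $\bn(\by)$ is constant on each straight segment through the vertex, so the restriction to each incident edge (which is all the lemma is used for) is analytic in the arclength parameter.
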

\begin{remark}
We note that by choosing $r$ sufficiently small we can assume that the 
intersection of all three-edges with $B_r(\bx_p)$ are of length $r$. 
Moreover, since Laplace's equation is invariant under scalings the 
subproblem associated with the corner can be mapped to an integral 
equation on three intersecting edges of unit length.
\end{remark}

In light of the preceding remark, in the remainder of this paper we 
restrict our attention to the geometry shown in Figure \ref{fig:trip_geom}.

\begin{figure}
\begin{center}
\begin{tikzpicture}
\path[draw,thick] (2.87,-0.089)--(4,4);
\path[draw,thick] (4,4)--(0.197,2.11838);
\path[draw,thick] (4,4)--(8.236,4.238);
\path[draw, dashed] (4,4) circle (1.5);

\node at (3.2,5.8) {$\theta_1$};
\node at (2.8,2.4) {$\theta_2$};
\node at (5.5,2.7) {$\theta_3$};
\node at (2.65,-0.3) {$\Gamma_{(2,3)}$};
\node at (0,1.9) {$\Gamma_{(1,2)}$};
\node at (8.84,4.24) {$\Gamma_{(3,1)}$};
\end{tikzpicture}
\end{center}\caption{Geometry near a triple junction}\label{fig:trip_geom}
\end{figure}
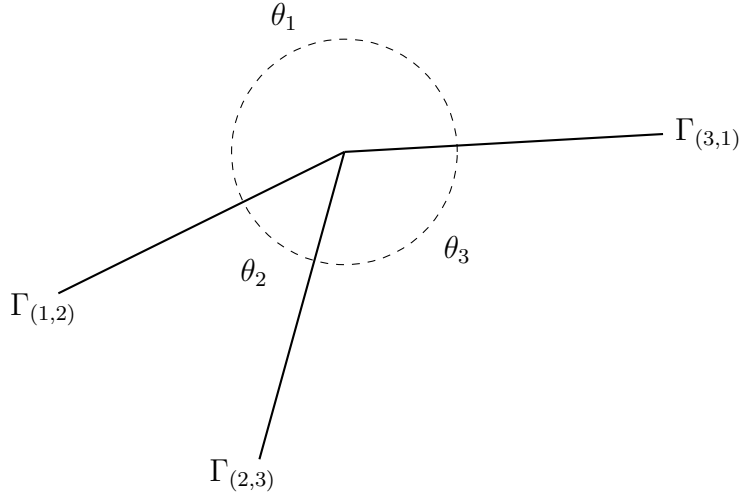

The following notation will be used in our analysis of triple junctions.
\begin{remark}
Suppose that $\Gamma_{(\ell,m)}$ and $\Gamma_{(\ell',m')}$ are two (possibly identical) edges of a triple junction in which all edges are of length one. For $(\ell,m)$ and $(\ell',m')$ in $\{(1,2),(2,3),(3,1)\}$ and $t \in (0,1)$ let
\begin{align}
\cD_{(\ell,m);(\ell',m')}[\sigma](t) ={\rm p.v.} \left.\cD_{\Gamma_{(\ell,m)}}[\sigma]\right|_{\Gamma_{(\ell',m')}}
\label{eq:dsingvertdef}
\end{align}
and
\begin{align}
\cD^*_{(\ell,m);(\ell',m')}[\rho](t) ={\rm p.v.} \left.\cD^*_{\Gamma_{(\ell,m)}}[\rho]\right|_{\Gamma_{(\ell',m')}}
\label{eq:dastsingvertdef}
\end{align} 
for any $\sigma,\rho \in \mathbb{L}^2(\Gamma_{(3,1)}\cup\Gamma_{(1,2)}\cup\Gamma_{(2,3)}).$ Note that if $(\ell,m) = (\ell',m')$ then both quantities are identically zero for any $\sigma$ and $\rho.$ If $(\ell,m) \neq (\ell',m')$ then the principal value is not required. 

Finally, in the following we will also denote the restrictions of $\sigma$ and $\rho$ to an edge $\Gamma_{(\ell,m)}$ by $\sigma_{(\ell,m)}$ and $\rho_{(\ell,m)},$ respectively.
\end{remark}

% flatex input end: [02boundary.tex]

%%%%%%%%%%%%%%%%%%%%%%%%%%%%%%%%%%%%%%%%%%%%%%%%%%%%%%%%%%%%
% flatex input: [03mainres.tex]
\section{Main results}
\label{sec:mainres}
In this section we state several theorems which characterize
the behavior of the solutions $\sigma,\rho$ to ~\cref{eq:inteq1,eq:inteq2} for 
the single-vertex problem with piecewise smooth boundary data $f$ and $g.$
 Before doing so we first introduce some convenient notation. To that end,
let $\Gamma_{(1,2)},\Gamma_{(2,3)}$ and $\Gamma_{(3,1)}$ be three edges 
of unit length meeting at a vertex as in Figure \ref{fig:trip_geom}. 
Let $\theta_1,$ $\theta_2,$ and $\theta_3$ be the 
angles at which they meet and suppose that $0<\theta_1,\theta_2,\theta_3<2\pi$ 
are real numbers summing to $2\pi$. 
Let $\Omega_1$ denote the region bordered by $\Gamma_{(3,1)}$ 
and $\Gamma_{(1,2)},$ $\Omega_2$ the region bordered by 
$\Gamma_{(1,2)}$ and $\Gamma_{(2,3)},$ and $\Omega_3$ 
denote the region bordered by $\Gamma_{(2,3)}$ and $\Gamma_{(3,1)}$.  
Finally, let $\mu_i$ and $\nu_i$ be the parameters corresponding to 
$\Omega_i,$ $i=1,2,3$ and define the constants $d_{(1,2)},d_{(2,3)}$ and 
$d_{(3,1)}$ by
\begin{equation}
\begin{aligned}
d_{(1,2)} &= \frac{\mu_1\nu_2-\mu_2\nu_1}{\mu_1\nu_2+\mu_2\nu_1},\\
d_{(2,3)} &= \frac{\mu_2\nu_3-\mu_3\nu_2}{\mu_2\nu_3+\mu_3\nu_2},\\
d_{(3,1)} &= \frac{\mu_3\nu_1-\mu_1\nu_3}{\mu_3\nu_1+\mu_1\nu_3}.
\end{aligned}
\label{eq:dijdef}
\end{equation}

\begin{remark}
\label{rem:abc}
We note the following properties of 
$d_{(3,1)},d_{(1,2)},d_{(2,3)}$ which, for notational convenience,
we will denote by $a,$ $b,$ and $c,$ respectively.
Firstly, since  $\mu_{i},\nu_{i}$ are positive
real numbers, it follows that
$a,b,c \in (-1,1)$. 
Secondly, a simple calculation shows that
$c = -(a+b)/(1+ab)$. 
Thus, at each triple junction, there are two parameters
$(a,b)$ which encapsulate the relevant information
regarding material properties at that junction. 
For the rest of the paper, in a slight abuse of notation, 
we will refer to $(a,b)$ as the material parameters.
\end{remark}

Next we define several quantities which will be used in the statement 
of the main results. Let $\mathcal{J}$ denote the set of 
indices $\{ (1,2),(2,3),(3,1) \}$ and 
$X= \mathbb{L}^{2} (\Gamma_{(1,2)}) \otimes
\mathbb{L}^{2}(\Gamma_{(2,3)}) \otimes \mathbb{L}^{2}(\Gamma_{(3,1)})$.
Let $\Kdir: X \to X$, and $\Kneu: X \to X$ denote
the bounded operators in~\cref{eq:inteq1} and~\cref{eq:inteq2}
respectively. 
For any operator $A: X\to X$, $\bh \in X$, and $(i,j) \in \mathcal{J}$, 
we denote the restriction of $A[\bh]$ 
to the edge $\Gamma_{(i,j)}$ by $A[\bh]_{(i,j)}$.
For example, given $\bh(t) = [h_{(1,2)}(t), h_{(2,3)}(t), h_{(3,1)}(t)]^{T} 
\in X$, and $(i,j)\in \mathcal{J}$,  
\begin{equation}
\Kdir[\bh]_{(i,j)} =  -\frac{1}{2} h_{(i,j)} + d_{(i,j)} 
\sum_{(\ell,m) \in \mathcal{J}} \mathcal{D}_{(\ell,m);(i,j)}[h_{(\ell,m)}] \, , 
\end{equation}
where the operators $\cD_{(\ell,m);(i,j)}$ are defined 
in~\cref{eq:dsingvertdef}.

We are interested in the following two problems:
\begin{enumerate}
\item for what collection of $\bh \in X$ are $\Kdir[\bh]$, 
and $\Kneu[\bh]$ piecewise smooth functions on each of the edges
$\Gamma_{(i,j)}$, $(i,j) \in \mathcal{J}$; and
\item  given 
$h_{(i,j)} \in \mathcal{P}_{N}$, a polynomial of degree at most $N$, construct an 
explicit basis for $\Kdir^{-1}[\bh]$ and $\Kneu^{-1}[\bh]$ \, .
\end{enumerate}
In~\cref{subsec:dirich}, we address these questions for $\Kdir$, while 
in~\cref{subsec:neu} we present analogous results for $\Kneu$.

\subsection{Analysis of $\Kdir$ \label{subsec:dirich}}
Suppose that $\bh(t) = [h_{(1,2)}(t),h_{(2,3)}(t),h_{(3,1)}(t)]^{T} 
= \bv t^{\beta}$, 
where $t$ denotes the distance along the edge $\Gamma_{(i,j)}$ 
from the triple junction, and
$\bv \in \mathbb{R}^{3}$ and $\beta \in \mathbb{R}$ are constants.

In the following theorem, we derive necessary conditions
on $\beta,\bv$ such that $\Kdir[\bh]_{(i,j)}$ is a smooth
function on each edge $\Gamma_{(i,j)}$, $(i,j) \in \mathcal{J}$.
\begin{theorem}\label{thm_fwd}
Let $\Adir(a,b,\beta) \in \mathbb{R}^{3\times 3}$ denote the matrix
given by
\begin{equation}
\label{eq:defAdir}
\Adir(a,b,\beta) = 
\begin{pmatrix}
\spt{\beta} & b \sin{\beta(\pi-\theta_2)}& -b\sin{\beta (\pi-\theta_1)}\\
(a+b)/(1+ab) \sin{\beta (\pi-\theta_2)} & \spt{\beta} &
-(a+b)/(1+ab)\sin{\beta(\pi-\theta_3)} \\  
a \sin{\pi \beta(1-\theta_1)} & -a\sin{\pi\beta (1-\theta_3)} & \spt{\beta} 
\end{pmatrix} \, .
\end{equation}
Suppose that $\beta$ is a positive real number such that 
$\det{\Adir(d_{(3,1)},d_{(1,2)}, \beta)} =0$
and that $\bv$ is a null-vector of $\Adir(d_{(3,1)},d_{(1,2)},\beta)$.
Let $\bh(t) = \bv t^{\beta}$, $0<t<1$.  
Then $\Kdir[\bh]_{(i,j)}$ is an analytic function of $t$, for $0<t<1$, 
on each of the edges $\Gamma_{(i,j)}$, $(i,j) \in \mathcal{J}$.
\end{theorem}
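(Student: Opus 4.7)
The plan is to compute $\Kdir[\bh]$ for $\bh(t)=\bv t^{\beta}$ explicitly and show that on each edge $\Gamma_{(i,j)}$ the result decomposes as a scalar multiple of $t^{\beta}$ plus an analytic remainder; the theorem then follows once the vector of $t^{\beta}$-coefficients is identified with a nonzero scalar multiple of $\Adir(a,b,\beta)\bv$, which vanishes by hypothesis.

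Parameterize each edge by its arclength $s$ from the triple junction. For two distinct edges meeting at an oriented angle $\phi=\phi_{(\ell,m),(i,j)}$, a direct calculation of the double-layer kernel on straight segments gives
\begin{equation*}
\cD_{(\ell,m);(i,j)}[s^{\beta}](t)=\frac{\pm\sin\phi}{2\pi}\int_{0}^{1}\frac{t\,s^{\beta}}{s^{2}-2st\cos\phi+t^{2}}\,ds,
\end{equation*}
with the sign dictated by the orientation of $\bn_{(\ell,m)}$, while $\cD_{(i,j);(i,j)}[s^{\beta}]$ vanishes identically because the integrand is zero along a single straight segment. The substitution $s=tu$ converts this to $\tfrac{\pm\sin\phi}{2\pi}\,t^{\beta}\int_{0}^{1/t}u^{\beta}/(u^{2}-2u\cos\phi+1)\,du$. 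Writing $\int_{0}^{1/t}=\int_{0}^{\infty}-\int_{1/t}^{\infty}$, regularized by subtracting a finite portion of the large-$u$ Laurent expansion of the integrand when $\beta\ge 1$, isolates the $t^{\beta}$ contribution from an analytic remainder. The full integral $\int_{0}^{\infty}$ is a classical Mellin transform equal to $\pi\sin(\beta(\pi-\phi))/(\sin(\pi\beta)\sin\phi)$, while the tail is expanded using the Chebyshev generating function
\begin{equation*}
\frac{1}{u^{2}-2u\cos\phi+1}=\sum_{k=0}^{\infty}\frac{U_{k}(\cos\phi)}{u^{k+2}},\qquad u>1,
\end{equation*}
and termwise integration followed by multiplication by $t^{\beta}$ produces $\sum_{k\ge 0}U_{k}(\cos\phi)\,t^{k+1}/(\beta-k-1)$, which is analytic on $(-1,1)$. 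Hence
\begin{equation*}
\cD_{(\ell,m);(i,j)}[s^{\beta}](t)=\pm\frac{\sin(\beta(\pi-\phi_{(\ell,m),(i,j)}))}{2\sin(\pi\beta)}\,t^{\beta}+R_{(\ell,m);(i,j)}(t),
\end{equation*}
with $R_{(\ell,m);(i,j)}$ analytic on $(0,1)$.

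Summing these contributions and collecting the $t^{\beta}$ terms gives
\begin{equation*}
\Kdir[\bh]_{(i,j)}(t)=\frac{-1}{2\sin(\pi\beta)}\left[v_{(i,j)}\sin(\pi\beta)-d_{(i,j)}\!\!\sum_{(\ell,m)\ne(i,j)}\!\!\varepsilon_{(\ell,m),(i,j)}\,v_{(\ell,m)}\sin(\beta(\pi-\phi_{(\ell,m),(i,j)}))\right]t^{\beta}+\widetilde R_{(i,j)}(t),
\end{equation*}
with $\widetilde R_{(i,j)}$ analytic and $\varepsilon_{(\ell,m),(i,j)}\in\{\pm 1\}$ the orientation sign. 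Identifying each $\phi_{(\ell,m),(i,j)}$ with the appropriate interior angle $\theta_{1}$, $\theta_{2}$, or $\theta_{3}$, and using Remark~\ref{rem:abc} to substitute $-(a+b)/(1+ab)=c=d_{(2,3)}$ in the row corresponding to $\Gamma_{(2,3)}$, shows that the bracketed vector is exactly $\Adir(a,b,\beta)\bv$. The hypothesis $\Adir(a,b,\beta)\bv=0$ then forces the bracket to vanish, leaving only the analytic remainder.

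The main obstacle is the final geometric bookkeeping: the signs and specific angle arguments appearing in the off-diagonal entries of $\Adir$ reflect the orientations of the outward normals $\bn_{(1,2)},\bn_{(2,3)},\bn_{(3,1)}$ together with which region $\Omega_{1},\Omega_{2},\Omega_{3}$ separates each pair of edges, and the two apparent conventions $\sin(\beta(\pi-\theta_{j}))$ versus $\sin(\pi\beta(1-\theta_{j}))$ visible in different rows of $\Adir$ must be reconciled against a single consistent orientation convention. Getting these signs and angles right uniformly is routine but easy to mishandle. A minor secondary issue is the exceptional case $\sin(\pi\beta)=0$, which requires separate treatment via logarithmic terms; however, this corresponds to a discrete set of $\beta$'s that is generically avoided by solutions of $\det\Adir(a,b,\beta)=0$.
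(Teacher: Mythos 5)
Your proposal follows essentially the same route as the paper: the paper's proof rests on \cref{lem:dledge}, which gives the expansion of the double-layer potential of $s^{\beta}$ on a single straight edge evaluated at a point at angle $\theta_{0}$ (singular part $\tfrac{\sin(\beta(\pi-\theta_{0}))}{2\sin(\pi\beta)}t^{\beta}$ plus the power series $\tfrac{1}{2\pi}\sum_{k\ge 1}\tfrac{\sin(k\theta_{0})}{\beta-k}t^{k}$), and on \cref{lem:pot_fwd}, which assembles these edge-by-edge contributions into $-\tfrac{1}{2\sin(\pi\beta)}\Adir\bv\,t^{\beta}$ plus an analytic series, so that $\Adir\bv=0$ kills the only non-analytic term. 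Your Mellin-transform evaluation of $\int_{0}^{\infty}u^{\beta}/(u^{2}-2u\cos\phi+1)\,du$ together with the Chebyshev expansion of the tail is just a re-derivation of \cref{lem:dledge}, and your final assembly step is \cref{lem:pot_fwd}; the "geometric bookkeeping" you flag is exactly what that lemma records.

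The one genuine flaw is your dismissal of the case $\sin(\pi\beta)=0$. Integer $\beta$ is \emph{not} generically avoided by the hypothesis: the determinant factors as $\det\Adir(a,b,\beta)=\sin(\pi\beta)\,\alpha(a,b,c;\beta)$ (see \cref{eq:detAdir}), so every positive integer $\beta=m$ satisfies $\det\Adir(a,b,m)=0$ for \emph{all} material parameters, with null vector $\bv_{m,0}=[\sin(m\theta_{3}),\sin(m\theta_{1}),\sin(m\theta_{2})]^{T}$; these integer exponents are precisely the basis elements $\bv_{m,0}t^{m}$ that carry the smooth part of the density and are indispensable in \cref{thm_inv}. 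So the theorem must be proved for integer $\beta$ too, and your argument as written does not cover it, while the reason you give for skipping it is false. The repair is short and is what the paper does in \cref{lem:dledge,lem:pot_fwd}: for $\beta=m$ the scale-invariant integral produces a $t^{m}\log t$ term in place of the $t^{\beta}/\sin(\pi\beta)$ term, and after assembling the three edges its vector coefficient is $-\tfrac{(-1)^{m}}{2\pi}\Adir(a,b,m)\bv$, which again vanishes under the null-vector hypothesis, leaving only the analytic series (with the $k=m$ resonant term replaced by the $\bCdiag(\bd,m)\bv\,t^{m}$ contribution). With that case added, your argument is complete and matches the paper's.
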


The above theorem guarantees that for appropriately 
chosen densities $\bh \in X$, 
the potential $\Kdir[\bh]$ 
is an analytic function on each of the edges.  

We now consider the construction of a
basis for 
$\Kdir^{-1}[\bh]$, when $h_{(i,j)} \in \mathcal{P}_{N}$,
$(i,j) \in \mathcal{J}$ for some $N>0$.

In order to prove this result, we require a collection 
of $\beta, \bv$ satisfying the conditions of~\cref{thm_fwd}.
The following lemma states the existence of a countable
collection of $\beta,\bv$ which are analytic on a subset of 
$(-1,1)^2$.
\begin{lemma}
\label{lem:betavs_existence_dir}
Suppose that $\theta_{1},\theta_{2},\theta_{3}$ are irrational numbers
summing to $2\pi$,
and $(a,b) \in (-1,1)^2$.
Then there exists a countable collection of open subsets of $(-1,1)^2,$ 
denoted by $S_{i,j},$ as well as a corresponding set of functions 
$\beta_{i,j}: S_{i,j} \to \mathbb{R}$, 
$i=0,1,2,\ldots$, $j=0,1,2$ such that
$\det{\Adir}(a,b,\beta_{i,j}) = 0$ for all $(a,b) \in S_{i,j}$.
The corresponding 
null-vectors $\bv_{i,j}: S_{i,j} \to \mathbb{R}^{3}$ of 
$\Adir(a,b,\beta_{i,j})$ are also analytic functions.
Finally, for any $N>0$, $|\cap_{i=0}^{N}\cap_{j=0}^{2} S_{i,j} | > 0$.   
\end{lemma}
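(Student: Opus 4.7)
The plan is to exhibit three families of analytic branches and to arrange matters so that a single reference point $(a_0,b_0)\in(-1,1)^2$ lies in every $S_{i,j}$; any finite intersection will then automatically contain an open neighborhood of that point.

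First I would verify the algebraic identity $\det\Adir(a,b,n)\equiv 0$ on $(-1,1)^2$ for every positive integer $n$. At $\beta=n$ the diagonal entries $\sin(\pi\beta)$ vanish, so only the two $3$-cycle terms in the Leibniz expansion of the determinant survive, and these contribute products of off-diagonal entries that cancel identically because of the symmetry in the rows of \cref{eq:defAdir}. This yields the trivial branch $\beta_{i,0}\equiv i$ on $S_{i,0}:=(-1,1)^2$; the corresponding null-vector $\bv_{i,0}(a,b)$ is obtained as any non-vanishing column of the adjugate of $\Adir(a,b,i)$, which is real-analytic on the open complement of the analytic set where all columns of the adjugate vanish simultaneously.

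For the two non-trivial branches near each $n\geq 1$, I would expand $M(a,b,\beta):=\Adir(a,b,\beta)$ around $\beta=n$,
\begin{equation*}
M(a,b,n+\epsilon)=M(a,b,n)+\epsilon\,\pi(-1)^n I+O(\epsilon^2,\epsilon|a|,\epsilon|b|),
\end{equation*}
and compute the characteristic polynomial of $M(a,b,n)$. Since its diagonal vanishes, this polynomial reduces to $\lambda(\lambda^2+S(a,b))$, where $S(a,b):=ab\,s_1(n)^2-bc\,s_2(n)^2-ac\,s_3(n)^2$ with $c=-(a+b)/(1+ab)$ and $s_i(n):=\sin(n(\pi-\theta_i))$. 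Hence the eigenvalues of $M(a,b,n)$ are $0$ and $\pm\sqrt{-S(a,b)}$, and to leading order in $\epsilon$ the three roots of $\det M(a,b,n+\epsilon)=0$ are $\epsilon=0$ and $\epsilon=\pm(-1)^{n+1}\sqrt{-S(a,b)}/\pi$. Choosing the reference point $(a_0,b_0)=(\tau,-\tau)$ with $\tau>0$ small forces $c(a_0,b_0)=0$, so $S(a_0,b_0)=-\tau^2 s_1(n)^2$; the irrationality hypothesis on $\theta_1/\pi$ ensures $s_1(n)\neq 0$ for every $n\geq 1$, so $-S(a_0,b_0)>0$ and the three nearby roots of $\det\Adir(a_0,b_0,\cdot)$ are distinct simple zeros. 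The implicit function theorem then produces two real-analytic branches $\beta_{n,1},\beta_{n,2}$ on open neighborhoods $S_{n,1},S_{n,2}$ of $(a_0,b_0)$, and the corresponding null-vectors $\bv_{n,1},\bv_{n,2}$ are obtained exactly as before, by picking a column of the relevant adjugate that is nonzero at $(a_0,b_0)$ and shrinking $S_{n,j}$ accordingly.

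Since $(a_0,b_0)\in S_{i,j}$ for every $(i,j)$ by construction, any finite intersection $\cap_{i=0}^{N}\cap_{j=0}^{2}S_{i,j}$ is a finite intersection of open sets each containing $(a_0,b_0)$. It is therefore itself an open neighborhood of $(a_0,b_0)$, and in particular has positive Lebesgue measure.

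The main obstacle is locating a single reference point at which, for every integer $n\geq 1$, the three roots of $\det\Adir(a_0,b_0,\beta)=0$ clustered near $\beta=n$ are simultaneously simple and distinct. The choice $(a_0,b_0)=(\tau,-\tau)$ kills the parameter $c$ and collapses the discriminant $-S$ to the single term $\tau^2 s_1(n)^2$, whose non-vanishing for all $n$ is exactly what the irrationality hypothesis on the angles delivers. A secondary technical point is that the IFT neighborhoods $S_{n,1},S_{n,2}$ generally shrink as $n\to\infty$; this is harmless because the lemma asks only about finite intersections.
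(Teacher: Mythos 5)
Your overall strategy is viable and genuinely different from the paper's: you anchor every branch at a single reference point $(a_0,b_0)=(\tau,-\tau)$ on the line $c=0$ and invoke the implicit function theorem there, whereas the paper constructs $\beta_{i,j}$ along the entire degenerate segments $a=0$, $b=0$, $c=0$ (using the scalar transmission result, \cref{lem_transm}, from prior work, plus an explicit computation showing $\partial_\beta\alpha\neq 0$ along the whole segment) and then extends by the IFT to neighborhoods of those segments. Your single-point version is enough for the lemma as literally stated, and it buys a simpler nondegeneracy check (only at one small $\tau$); what it gives up is that the paper's $S_{i,j}$ contain the full segments $R_1,\dots,R_6$, a fact that is actually used later in the proof of \cref{thm_inv}, which establishes invertibility of the coefficient map along $R_1$ ($b=0$) and extends by continuity. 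Also note the algebra: the quadratic coefficient of the characteristic polynomial of $M(a,b,n)$ is $ab\,s_1(n)^2+bc\,s_2(n)^2+ac\,s_3(n)^2$ (i.e.\ $\alpha(a,b,c;n)$ of \cref{eq:defalpha}), not $ab\,s_1^2-bc\,s_2^2-ac\,s_3^2$; the slip is harmless at $c=0$ but should be fixed.

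The one step that is not yet a proof is the assertion that, for the fixed $\tau$ and \emph{every} integer $n\ge 1$, the determinant $\det\Adir(a_0,b_0,\cdot)$ has, near $\beta=n$, exactly three zeros which are real, pairwise distinct, and simple. Your justification is a leading-order expansion in $\epsilon$; but a single $\tau$ must serve all $n$, and $|\sin(n(\pi-\theta_1))|$ has no positive lower bound over $n$ (only nonvanishing), so the predicted root separation $\tau|s_1(n)|/\pi$ can be arbitrarily small and the leading-order picture by itself does not deliver reality or simplicity (three roots near $n$ could a priori be $n$ together with a complex-conjugate pair). The clean way to close this is exactly the paper's mechanism: at $c=0$ the determinant factors as $\sin(\pi\beta)\,\bigl(\sin(\pi\beta)-\tau\sin(\beta(\pi-\theta_1))\bigr)\bigl(\sin(\pi\beta)+\tau\sin(\beta(\pi-\theta_1))\bigr)$, so each nontrivial factor is the scalar transmission equation covered by \cref{lem_transm}, which gives a real root $z_n^{\pm}$ near $n$; simplicity then follows from a short derivative estimate (or a root-count via Rouch\'e against the $\tau=0$ case) together with the observation that the three roots are pairwise distinct because $\sin(n\theta_1)\neq 0$ for irrational $\theta_1/\pi$. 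Two smaller points: the $i=0$ branches ($\beta_{0,j}=0$, $j=0,1,2$) are not produced by your adjugate recipe since $\Adir(a,b,0)\equiv 0$ — there one simply takes the coordinate vectors, as the paper does; and for the integer branches the paper's explicit null vector $[\sin(i\theta_3),\sin(i\theta_1),\sin(i\theta_2)]^T$ is analytic on all of $(-1,1)^2$, which is tidier than selecting an adjugate column and shrinking.
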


In the following theorem, we present the main result of this section
which gives a basis for $\Kdir^{-1}[\bh]$. 
\begin{theorem}
\label{thm_inv}
Consider the same geometry as in~\cref{fig:trip_geom},
where $\theta_{1},\theta_{2}$, and
$\theta_{3}$ sum to $2\pi$ and $\theta_1/\pi,$ $\theta_2/\pi,$ and $\theta_3/\pi$
 are irrational. 
Let $\beta_{i,j},\bv_{i,j},S_{i,j}$, $i=0,1,2,\ldots$, $j=0,1,2$ 
be as defined in~\cref{lem:betavs_existence_dir}, and for any
positive integer $N$, let $S_{N}$ denote the region of common 
analyticity of 
$\beta_{i,j},\bv_{i,j}$, i.e., 
$S_{N} = \cap_{i=0}^{N} \cap_{j=0}^{2} S_{i,j}$.
Finally, suppose that $h_{(i,j)}^{k}$, 
$(i,j) \in \mathcal{J}, k=0,1,2,\ldots N$ 
are real constants, and
define $h_{(i,j)}$ by
\begin{equation}
\begin{aligned}
h_{(i,j)}(t) &= \sum_{k=0}^{N} h_{(i,j)}^{k} t^{k} \, , \end{aligned}
\label{eq:def_h}  
\end{equation}
$0<t<$1.
Then there exists an open region $\tilde{S}_{N} \subset S_{N} \subset 
(-1,1)^2$ 
with $|\tilde{S}_{N}|>0$ such that the following holds.
For all $(a,b) \in \tilde{S}_{N}$, 
there exist constants $p_{i,j}$, $i=0,1,\ldots N$, $j=0,1,2$,
such that
\begin{equation}
\sigma = 
\begin{bmatrix}
\sigma_{1,2}(t) \\
\sigma_{2,3}(t) \\
\sigma_{3,1}(t)
\end{bmatrix} = \sum_{i=0}^{N} \sum_{j=0}^{2} 
p_{i,j} \bv_{i,j} t^{\beta_{i,j}}
\, ,
\end{equation}
satisfies
\begin{equation}
\max_{(i,j)\in \mathcal{J}} 
\left| h_{(i,j)} - \Kdir[\sigma]_{(i,j)} \right|
\leq C t^{N+1} \, ,
\end{equation}
for $0<t<1$, where $C$ is a constant.
\end{theorem}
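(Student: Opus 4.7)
The plan is to turn the approximate functional equation $\Kdir[\sigma] = \bh$ (modulo $t^{N+1}$) into a finite-dimensional linear system on the unknown coefficients $p_{i,j}$ and to show that this system is generically nonsingular. First I would fix $N$ and $(a,b) \in S_N$ and use Theorem~\ref{thm_fwd} together with~\cref{lem:betavs_existence_dir} to conclude that for every $(i,j)$ and every target edge $(l,m)\in\mathcal{J}$, the function
\begin{equation*}
\Kdir[\bv_{i,j}(a,b)\, t^{\beta_{i,j}(a,b)}]_{(l,m)}(t)
\end{equation*}
is analytic on $(0,1)$. A direct inspection of the operators $\cD_{(i',j');(l,m)}[t^{\beta}]$, which can be written in closed form via incomplete beta / Gauss hypergeometric functions, shows analyticity is retained at $t=0$, yielding a Taylor expansion $\sum_{k\ge 0} M^{(i,j)}_{(l,m),k}(a,b)\,t^k$ with coefficients analytic in $(a,b)\in S_N$.

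Next I would form the candidate $\sigma = \sum_{i=0}^{N}\sum_{j=0}^{2} p_{i,j}\bv_{i,j}t^{\beta_{i,j}}$ and impose that the first $N+1$ Taylor coefficients of $\Kdir[\sigma]_{(l,m)}$ match those of $h_{(l,m)}$ for every $(l,m)\in\mathcal{J}$. This produces a square linear system
\begin{equation*}
M(a,b)\,\mathbf{p} = \mathbf{h},
\end{equation*}
of size $3(N+1)$, with $M(a,b)$ depending analytically on $(a,b)\in S_N$. Once invertibility of $M$ is established on an open set $\tilde{S}_N\subset S_N$, the bound on the residual follows: by construction the Taylor coefficients of orders $0,\dots,N$ of $h_{(l,m)}-\Kdir[\sigma]_{(l,m)}$ all vanish, and since $\Kdir[\sigma]_{(l,m)}$ is analytic on $[0,r]$ for some $r>0$ uniform on compact subsets of $\tilde{S}_N$, the Taylor remainder estimate gives $|h_{(l,m)}(t)-\Kdir[\sigma]_{(l,m)}(t)|\le C t^{N+1}$.

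The hard step is showing $\det M(a,b)\not\equiv 0$ on $S_N$; once this is done, the zero set of $\det M$ is a proper real-analytic subvariety of measure zero and $\tilde S_N := S_N\setminus\{\det M=0\}$ is an open set of positive measure, as required. To establish nondegeneracy I would examine the limit of $M(a,b)$ as $(a,b)\to(0,0)$. At that point $\Adir(0,0,\beta)=\sin(\pi\beta)\,I$, so its roots are precisely the non-negative integers, each of multiplicity three, and $\Kdir$ collapses to multiplication by $-\tfrac12$. Picking an approach path along which the branches satisfy $\beta_{i,j}(a,b)\to i$ and $\bv_{i,j}(a,b)$ tend to three linearly independent vectors in $\mathbb{R}^3$, the limiting matrix $M_0$ becomes block diagonal with blocks of the form $-\tfrac12 V_i$, where $V_i$ is a nonsingular $3\times 3$ matrix whose columns are the limiting null-vectors. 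Thus $M_0$ is manifestly invertible, so $\det M$ is not identically zero by analytic continuation.

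The principal obstacle is precisely this limit analysis: the point $(0,0)$ is a degenerate boundary point of $S_N$ where the eigenvalues of $\Adir(0,0,\beta)$ coalesce with multiplicity three at each integer, so the analytic branches $\beta_{i,j},\bv_{i,j}$ furnished by~\cref{lem:betavs_existence_dir} need not extend analytically across $(0,0)$—they may exhibit branch singularities there. Consequently one has to choose a specific curve (for instance $a=b\to 0^+$) along which the perturbation of the triple eigenvalue can be resolved, identify the three limiting eigenvectors, and verify that the resulting limiting matrix is invertible. An equivalent alternative is to exhibit some concrete $(a_*,b_*)\in S_N$ where the structure of $\Adir$ is amenable to direct computation; either route relies on the detailed perturbative information about $\det \Adir(a,b,\cdot)$ established in the proof of~\cref{lem:betavs_existence_dir}.
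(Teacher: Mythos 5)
Your overall framework is the same as the paper's: using \cref{lem:pot_fwd} one writes down the $3(N+1)\times 3(N+1)$ matrix (the paper's $\bB$) mapping the coefficients $p_{i,j}$ in the $\bv_{i,j}t^{\beta_{i,j}}$ basis to the Taylor coefficients of orders $0,\dots,N$ of $\Kdir[\sigma]$ on the three edges, one shows this matrix is invertible somewhere in $S_N$, and one concludes with a remainder estimate. The gap is in the decisive nondegeneracy step, which you only sketch, and whose supporting heuristic is incorrect. The limit of $M(a,b)$ as $(a,b)\to(0,0)$ is \emph{not} obtained by applying the limiting operator $-\tfrac12 I$ to the limiting basis: by \cref{lem:pot_fwd}, once $\bv_{i,j}$ is a null vector of $\Adir$ the potential of $\bv_{i,j}t^{\beta_{i,j}}$ contains no $t^{\beta_{i,j}}$ term at all, and its coefficient of $t^{i}$ is $\frac{1}{\beta_{i,j}-i}\bC(\bd,i)\bv_{i,j}$, a product of a resonant factor that blows up (since $\beta_{i,j}\to i$) with the coupling matrix $\bC(\bd,i)$ of \cref{eq:defcsmooth}, which vanishes linearly in the material parameters. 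The value of this $0\cdot\infty$ limit is controlled by the first-order splitting of $\beta_{i,j}$ away from the integer; for instance along $b=0$ one finds $\beta_{i,1}-i\sim a\sin(i\theta_3)/\pi$ and hence $\frac{1}{\beta_{i,1}-i}\bC(\bd,i)\bv_{i,1}\to-\tfrac12\bv_{i,2}$, $\frac{1}{\beta_{i,2}-i}\bC(\bd,i)\bv_{i,2}\to-\tfrac12\bv_{i,1}$, so the limiting diagonal block is $-\tfrac12$ times a \emph{column-permuted} $[\bv_{i,0}\,|\,\bv_{i,1}\,|\,\bv_{i,2}]$ (the off-diagonal blocks do vanish, since there $\bC\to0$ while the denominators stay bounded away from zero). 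So the conclusion you want is salvageable, but only after exactly the perturbative eigenvalue-splitting computation that you yourself flag as unresolved; as written, the invertibility of the limiting matrix is asserted rather than proved. (Two smaller points: $(0,0)$ lies outside $S_N$ and the branches need not extend there, so any limit must be taken along a path inside one connected component; and since a single point of invertibility plus continuity already yields an open neighborhood, the measure-zero-subvariety language is unnecessary—but also insufficient on other components of $S_N$ if it is disconnected.)

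The paper sidesteps the degenerate corner entirely. It proves invertibility of $\bB$ at \emph{every} point of the segment $R_1=\{(a,0):a>0\}$ (and its five analogues), where the null vectors and the matrices $\bC$, $\bCdiag$ simplify explicitly: after a permutation, $P\bB P^{T}$ acquires a block-triangular structure whose diagonal consists of diagonal matrices $D_1,D_2$ (nonzero precisely because $\theta_1/\pi,\theta_2/\pi,\theta_3/\pi$ are irrational) and Cauchy matrices with entries $1/(\beta_{i,\ell}-j)$, which are invertible because \cref{lem_transm} guarantees the $\beta_{i,\ell}$ are never integers for $a\neq0$; a final $2\times2$ block factorization reduces everything to the two-material wedge analysis of \cite{hoskins2018numerical}. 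Continuity of the determinant on compact subsets then yields an open $\tilde S_N$ containing whole neighborhoods of the segments $a=0$, $b=0$, $c=0$, a stronger and more useful conclusion than an unspecified positive-measure set (it is what the discretization and the conjectures in the paper lean on). To complete your route you must carry out the splitting computation along a specific curve into $(0,0)$ and verify invertibility of the resulting permuted blocks; alternatively, adopt the paper's on-segment factorization, which proves nondegeneracy without any limiting argument.
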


\subsection{Analysis of $\Kneu$ \label{subsec:neu}}
Suppose that $\bh(t) = [h_{(1,2)}(t),h_{(2,3)}(t),h_{(3,1)}(t)]^{T} = 
\bw t^{\beta-1}$,  
where $t$ denotes the distance on the edge $\Gamma_{(i,j)}$ 
from the triple junction, and $\bw \in \mathbb{R}^{3}$ 
and $\beta$ are constants.
In the following theorem, we discuss necessary conditions
on $\beta,\bw$ guaranteeing that $\Kneu[\bh]_{(i,j)}$ is a smooth
function on each edge $\Gamma_{(i,j)}$, $(i,j) \in \mathcal{J}$.
\begin{theorem}\label{thm_fwd_neu}
Let $\Aneu(a,b,\beta) \in \mathbb{R}^{3\times 3}$ denote the matrix
given by
\begin{equation}
\label{eq:defAneu}
\Aneu(a,b,\beta) = 
\begin{pmatrix}
\spt{\beta} & -b \sin{\beta(\pi-\theta_2)}& b\sin{\beta (\pi-\theta_1)}\\
-(a+b)/(1+ab) \sin{\beta (\pi-\theta_2)} & \spt{\beta} &
(a+b)/(1+ab)\sin{\beta(\pi-\theta_3)} \\  
-a \sin{\beta(\pi-\theta_1)} & a\sin{\pi\beta (1-\theta_3)} & \spt{\beta} 
\end{pmatrix} \, .
\end{equation}
Suppose that $\beta$ is a positive real number such that 
$\det{\Aneu(d_{(3,1)},d_{(1,2)}, \beta)} =0$
and let $\bw$ denote a corresponding null-vector of 
$\Aneu(d_{(3,1)},d_{(1,2)},\beta)$.
Let $h = \bw t^{\beta-1}$, $0<t<1$.  
Then $\Kneu[h]_{(i,j)}$ is an analytic function of $t$, for $0<t<1$, 
on each of the edges $\Gamma_{(i,j)}$, $(i,j) \in \mathcal{J}$.
\end{theorem}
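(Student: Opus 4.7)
The plan is to mirror the proof of \cref{thm_fwd}, using the fact that $\Kneu$ is built from the adjoint double layer $\cD^{*}$ in the same way that $\Kdir$ is built from $\cD$. The idea is to insert the ansatz $\bh(t) = \bw t^{\beta-1}$ into $\Kneu[\bh]_{(i,j)}$ and show that the coefficient of every non-analytic term on $\Gamma_{(i,j)}$ is a component of $\Aneu(a,b,\beta)\bw$, which vanishes by hypothesis, so what remains is analytic.

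First I would parameterize the three edges as unit-length rays emanating from the junction with tangent directions determined by $\theta_1,\theta_2,\theta_3$, and reduce each off-diagonal piece $\cD^{*}_{(\ell,m);(i,j)}[t^{\beta-1}]$ to a one-dimensional integral of the form
\begin{equation*}
\frac{1}{2\pi}\int_0^1 \frac{s\cos\psi - t}{s^2 - 2st\cos\psi + t^2}\, s^{\beta-1}\, ds,
\end{equation*}
where $\psi \in \{\theta_1, \theta_2, \theta_3, \pi-\theta_1, \pi-\theta_2, \pi-\theta_3\}$ is read off from the geometry at the junction, together with the diagonal contributions $-\tfrac{1}{2} h_{(i,j)}(t)$ coming from the on-edge jump relation of $\cD^{*}$. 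The substitution $s = tu$ splits each such integral into a scale-invariant piece, in which the upper limit can be extended to $\infty$ and evaluated in closed form via a standard Mellin identity
\begin{equation*}
\int_0^\infty \frac{u^{\beta-1}(u\cos\psi - 1)}{u^2 - 2u\cos\psi + 1}\, du \;=\; \frac{\pi \sin(\beta(\pi-\psi))}{\sin(\pi\beta)},
\end{equation*}
plus a remainder integrated over $[t^{-1},\infty)$. The remainder is analytic in $t$ for $0<t<1$ because the integrand is jointly real-analytic in $(t,s)$ on $[0,1]^2$ away from the junction (the two edges meet only at the origin, and each has unit length).

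Next, I would assemble the coefficients. The scale-invariant pieces contribute a term proportional to $t^{\beta-1}$ on $\Gamma_{(i,j)}$; multiplying by the material prefactor $d_{(i,j)}$ in front of the sum in \eqref{eq:inteq2} and combining with the $-\tfrac{1}{2}$ diagonal jump (which, after pulling out a common factor of $1/\sin(\pi\beta)$ or equivalently rescaling the contribution, supplies the $\sin(\pi\beta)$ on the diagonal) produces exactly the three rows of $\Aneu(a,b,\beta)\bw$ as the vector of coefficients of $t^{\beta-1}$. Because $\cD^{*}$ uses the normal at the target point rather than the source point, the trigonometric arguments in the off-diagonal entries differ from those in $\Adir$: one gets $\sin(\beta(\pi-\theta_j))$ with the sign pattern and choice of $\theta_j$ recorded in \eqref{eq:defAneu}. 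Under the hypothesis $\Aneu(a,b,\beta)\bw = 0$, the $t^{\beta-1}$ coefficient vanishes on each of the three edges and what remains of $\Kneu[\bh]_{(i,j)}$ is the sum of the three analytic remainders, hence analytic on $(0,1)$.

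The main obstacle is the bookkeeping of signs and trigonometric arguments. Three things must be coordinated: the orientation convention $\bn_i = (\bv_r - \bv_\ell)^\perp/\|\bv_r - \bv_\ell\|$ and the resulting assignment $\ell(i),r(i)$, which determines the sign of $d_{(i,j)}$ and hence whether $a$, $b$, or $c = -(a+b)/(1+ab)$ multiplies each off-diagonal entry; the angle $\psi$ appearing in the Mellin integral, which is either the interior or the exterior dihedral angle at the junction depending on which side of the target edge the source edge lies; and the switch from the source-normal $\bn(\bx)$ in $\cD$ to the target-normal $\bn(\by)$ in $\cD^{*}$, which is what flips certain signs in \eqref{eq:defAneu} relative to \eqref{eq:defAdir} and swaps the role of $\theta_1$ and $\theta_3$ in the third row. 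Once these three pieces of bookkeeping are reconciled the theorem follows immediately from the computation above; the analyticity of the remainders, by contrast, is routine given the finite integration domain and the geometry.
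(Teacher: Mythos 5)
Your overall strategy coincides with the paper's: insert $\bw t^{\beta-1}$, rescale $s=tu$ in each edge-to-edge interaction, split into a scale-invariant piece plus an analytic remainder, and identify the vector of $t^{\beta-1}$ coefficients with $-\tfrac{1}{2\sin(\pi\beta)}\Aneu(a,b,\beta)\bw$, which vanishes by hypothesis (this is exactly the content of the paper's analogues of \cref{lem:dledge,lem:pot_fwd} for $\Kneu$). However, the central computation as you wrote it fails at two points. First, the reduced kernel is wrong: placing the target $\by=(t,0)$ on $\Gamma_{(i,j)}$, the normal $\bn(\by)$ is \emph{perpendicular} to that edge, so with source $\bx=s(\cos\psi,\sin\psi)$ one has $\bn(\by)\cdot(\bx-\by)=\pm s\sin\psi$ (the term $\bn(\by)\cdot\by$ vanishes), and the correct one-dimensional integral is $\pm\frac{1}{2\pi}\int_0^1 \frac{s\sin\psi\, s^{\beta-1}}{s^2-2st\cos\psi+t^2}\,ds$. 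The kernel $\frac{s\cos\psi-t}{s^2-2st\cos\psi+t^2}$ you used is the \emph{tangential} derivative of the single layer along the target edge, not $\cD^{*}$. Second, the Mellin identity you invoke is false: in fact
\begin{equation*}
\int_0^\infty \frac{u^{\beta-1}(u\cos\psi-1)}{u^2-2u\cos\psi+1}\,du \;=\; -\frac{\pi\cos\bigl(\beta(\pi-\psi)\bigr)}{\sin(\pi\beta)}\,,
\qquad
\int_0^\infty \frac{u^{\beta}}{u^2-2u\cos\psi+1}\,du \;=\; \frac{\pi\sin\bigl(\beta(\pi-\psi)\bigr)}{\sin(\pi\beta)\sin\psi}\,,
\end{equation*}
(for $0<\beta<1$, $0<\psi<\pi$); the sines populating $\Aneu$ come from the second identity applied to the correct kernel, not from the first. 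Your two errors happen to compose to the coefficient you want, but neither step survives checking, so the identification of the rows of $\Aneu$ is not established as written. Note also that for $\beta>1$ the extension of the upper limit to $\infty$ diverges, so even with the correct kernel an extra argument (analytic continuation in $\beta$, or subtracting the large-$u$ Taylor part, as the explicit expansion in the paper's lemma effectively does) is needed; and $\Aneu$ differs from $\Adir$ only by sign flips of the off-diagonal entries, not by any exchange of $\theta_1$ and $\theta_3$.

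Separately, your argument silently assumes $\sin(\pi\beta)\neq 0$, but the hypothesis of the theorem admits integer $\beta$ (every positive integer is a root of $\det\Aneu$). In that case the scale-invariant integrals do not exist in the form you use them; the expansion instead contains $t^{m-1}\log t$ terms whose coefficient is proportional to $\Aneu(a,b,m)\bw$, and one must check that these vanish under the hypothesis while the remaining part is a genuine power series. The paper's expansion lemma treats this case explicitly, and your proof needs a corresponding branch. The analyticity of the remainders and the orientation/bookkeeping remarks are fine in spirit, but the theorem does not follow until the kernel, the integral evaluation, and the integer case are repaired.
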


Before proceeding a few remarks are in order.
\begin{remark}
We note that $\det{\Adir(a,b,\beta)} = \det{\Aneu(a,b,\beta)}$.
Thus, the existence of $\beta,\bw$, which satisfy the conditions
of~\cref{thm_fwd_neu} is guaranteed by~\cref{lem:betavs_existence_dir}.
\end{remark}

\begin{remark}
For a given $\beta$, if there exists a $\bv \in \mathbb{R}^{3}$ 
such that $\Kdir[\bv t^{\beta}]$ is piecewise smooth then
there also exists a vector $\bw\in \mathbb{R}^{3}$ such that 
$\Kneu[\bw t^{\beta-1}]$ is also a smooth function.
However, the requirement that $\bw t^{\beta-1} \in X$ 
implies that for $\Kneu$, only $\beta$'s which satisfy
$\beta > 1/2$ are admissible.

For $\Kdir$, note that 
$\beta_{0,j} = 0$ for $j=0,1,2$ (see proof of~\cref{lem:betavs_existence_dir}
contained in~\cref{subsec:beta_v_existence}).
These densities are essential for the proof of~\cref{thm_inv}, 
since these are the only basis functions for which
the projection of their image under  
$\Kdir$ onto the constant functions are non-zero.

However, since $\beta_{0,j} \not > 1/2$, 
the densities $\bw_{0,j} t^{\beta_{0,j}-1}$ 
are excluded from the representation for the solution
to the equation $\Kneu[\sigma] = \bh$. 
Note that, unlike $\Kdir[\bv_{i,j} t^{\beta_{i,j}}]$, 
$\Kneu[\bw_{i,j} t^{\beta_{i,j}-1}]$, $i=1,2,\ldots$, $j=0,1,2$,
have a non-zero projection onto the constants (see~\cref{lem:pot_fwd_neu}).
\end{remark}

The following theorem is a converse of~\cref{thm_fwd_neu} under suitable restrictions.
\begin{theorem}
\label{thm_neu_inv}
Consider the same geometry as in~\cref{fig:trip_geom},
where $\theta_{1},\theta_{2}$, and
$\theta_{3}$ are irrational numbers summing to $2.$ 
Let $\beta_{i,j},\bw_{i,j},S_{i,j}$, $i=0,1,2,\ldots$, $j=0,1,2$ 
be as defined in~\cref{lem:betavs_existence_dir}. Let $T_{i,j}$ denote the 
open subset of $(-1,1)^2$ on which $\beta_{i,j}$ and $\bw_{i,j}$ are analytic and 
$\beta_{i,j} >1/2.$
 For any positive integer $N$, let $\snneu$ denote the region of common 
analyticity of 
$\beta_{i,j},\bw_{i,j}$, i.e., 
$\snneu = \cap_{i=1}^{N+1} \cap_{j=0}^{2} T_{i,j}$.
Finally, suppose that $h_{(i,j)}^{k}$, 
$(i,j) \in \mathcal{J}, k=0,1,2,\ldots N$ 
are real constants, and
define $h_{(i,j)}$ by
\begin{equation}
\begin{aligned}
h_{(i,j)}(t) &= \sum_{k=0}^{N} h_{(i,j)}^{k} t^{k} \, , \end{aligned}
\label{eq:def_h2}  
\end{equation}
$0<t<$1.

Then there exists an open region $\tsnneu \subset \snneu \subset 
(-1,1)^2$ 
with $|\tsnneu|>0$ such that the following holds.
For all $(a,b) \in \tsnneu$, 
there exist constants $p_{i,j}$, $i=1,2,\ldots N+1$, $j=0,1,2$,
such that
\begin{equation}
\sigma = 
\begin{bmatrix}
\sigma_{1,2}(t) \\
\sigma_{2,3}(t) \\
\sigma_{3,1}(t)
\end{bmatrix} = \sum_{i=1}^{N+1} \sum_{j=0}^{2} 
p_{i,j} \bw_{i,j} t^{\beta_{i,j}-1}
\, ,
\end{equation}
satisfies
\begin{equation}
\max_{(i,j)\in \mathcal{J}} 
\left| h_{(i,j)} - \Kneu[\sigma]_{(i,j)} \right|
\leq C t^{N+1} \, ,
\end{equation}
for $0<t<1$, where $C$ is a constant.
\end{theorem}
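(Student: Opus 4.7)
The plan is to mirror the structure of the proof of~\cref{thm_inv}, adapted to the Neumann setting. Since the $i=0$ modes are excluded (because $\beta_{0,j}-1\le -1/2$ puts them outside $\mathbb{L}^{2}$), I shift the index range up by one to $i=1,\dots,N+1$, thereby preserving a total of $3(N+1)$ basis functions, and then project the equation $\Kneu[\sigma]=\bh$ onto the first $N+1$ Taylor coefficients at $t=0$ on each of the three edges to obtain a square linear system for the unknown coefficients $p_{i,j}$.

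First I would invoke~\cref{thm_fwd_neu} to conclude that for every $(i,j)$ with $i=1,\ldots,N+1$ and $j\in\{0,1,2\}$, the function $\Kneu[\bw_{i,j}t^{\beta_{i,j}-1}]_{(\ell,m)}$ extends analytically across $t=0$ on each edge $(\ell,m)\in\mathcal{J}$. Writing its expansion as
\[
\Kneu[\bw_{i,j}t^{\beta_{i,j}-1}]_{(\ell,m)}(t) = \sum_{k=0}^{\infty} c^{(\ell,m)}_{i,j,k}(a,b)\, t^{k},
\]
the coefficients $c^{(\ell,m)}_{i,j,k}$ are real-analytic in $(a,b)$ on $\snneu$ since, by~\cref{lem:betavs_existence_dir}, both $\beta_{i,j}$ and $\bw_{i,j}$ are. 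Plugging the ansatz $\sigma=\sum_{i,j} p_{i,j}\bw_{i,j}t^{\beta_{i,j}-1}$ into the equation and matching the $k$-th Taylor coefficient on edge $(\ell,m)$ to the prescribed datum $h^{k}_{(\ell,m)}$ for $k=0,\ldots,N$ gives a $3(N+1)\times 3(N+1)$ linear system with coefficient matrix $M(a,b)$ whose entries are real-analytic on $\snneu$. Once this system is solved, the residual on each edge is analytic with vanishing Taylor coefficients of orders $0,1,\ldots,N$ at $t=0$, and hence is bounded by $Ct^{N+1}$ on $(0,1)$, giving the claimed estimate.

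The main obstacle is to show that $\det M(a,b)\not\equiv 0$ on $\snneu$; combined with real-analyticity of the entries, this yields a non-empty open set $\tsnneu\subset\snneu$ of positive measure on which $M$ is invertible and on which $p_{i,j}(a,b)$ are real-analytic. The argument here differs from its Dirichlet analog because in~\cref{thm_inv} the $i=0$ modes are precisely the ones supplying the constant Taylor coefficients of the right-hand side, whereas here those modes are absent and the constants in $\bh$ must instead be produced by the $i\ge 1$ modes. The required non-degeneracy of their constant-term projections is exactly the content of~\cref{lem:pot_fwd_neu}; I would combine this with a perturbation argument from a convenient point of $\snneu$ (a limit in the material parameters $(a,b)$ in which $M$ acquires a block-triangular structure with nonzero diagonal blocks built from the leading Taylor coefficients of each basis element), and then invoke real-analytic continuation to conclude $\det M\not\equiv 0$ on all of $\snneu$. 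Once invertibility is established the theorem follows immediately from the residual estimate above.
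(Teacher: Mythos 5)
Your plan follows the paper's route: the paper proves \cref{thm_neu_inv} by mirroring the Dirichlet completeness argument of \cref{thm_inv} with the index range shifted to $i=1,\dots,N+1$, using \cref{lem:pot_fwd_neu} in place of \cref{lem:pot_fwd} and \cref{lem:betahalf} to guarantee $\beta_{i,j}>1/2$ on neighborhoods of the degenerate segments, and you correctly identify the structural difference that the constant Taylor coefficients must now be supplied by the $i\geq 1$ modes, which is exactly what \cref{lem:pot_fwd_neu} provides since the smooth part of $\Kneu[\bw t^{\beta-1}]$ begins at $t^{0}$ rather than $t^{1}$.

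The gap is in the one step that carries the whole theorem: you assert, but do not exhibit, ``a convenient point of $\snneu$ in which $M$ acquires a block-triangular structure with nonzero diagonal blocks.'' In the paper this is not a soft perturbation claim; it is a concrete computation along the segments $a=0$, $b=0$, or $c=0$ (the sets $R_{k}$ of \cref{subsec:beta_v_existence}), where the $\beta_{i,j}$, the null vectors, and the matrices $\bC$, $\bCdiag$ take explicit closed forms. After a permutation (cf.\ \cref{fig:smooth_decomp}) the coefficient matrix factors into diagonal matrices with entries proportional to $\sin(m\theta_{3})$ --- nonzero only because $\theta_{3}/\pi$ is irrational --- and Cauchy matrices with entries $1/(\beta_{i,j}-k)$, invertible only because \cref{lem_transm} guarantees the $\beta_{i,1},\beta_{i,2}$ are non-integer there; the residual $2(N+1)\times 2(N+1)$ block is then identified with the two-edge transmission problem and its invertibility is imported from \cite{hoskins2018numerical}. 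Without carrying out this verification (or its Neumann analog, where the shifted index map $p_{i,j}\mapsto$ Taylor coefficients $0,\dots,N$ changes which rows the diagonal and Cauchy blocks occupy), the statement $\det M\not\equiv 0$ is unsupported, and your analytic-continuation step has nothing to continue from. A second, smaller point: the starting point must lie in $\snneu$, i.e.\ one needs $\beta_{i,j}>1/2$ along the chosen segment so that the basis elements are admissible densities; this is precisely \cref{lem:betahalf} and should be invoked explicitly rather than taken for granted.
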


% flatex input end: [03mainres.tex]

%%%%%%%%%%%%%%%%%%%%%%%%%%%%%%%%%%%%%%%%%%%%%%%%%%%%%%%%%%%%
% flatex input: [04conj.tex]
\section{Conjectures \label{sec:conj}}
There are four independent parameters 
that completely describe the triple junction problem, 
any two out of the three angles $\{ \theta_{1}, \theta_2, \theta_3 \},$ and any two of 
the parameters
$\{ d_{(1,2)}, d_{(2,3)}, d_{(3,1)} \}=\{b,c,a\}.$
Let $Y \subset \mathbb{R}^{4}$, denote the subset of $\mathbb{R}^{4}$
associated with the four free parameters 
that completely describe any triple junction given by
\begin{equation}
Y = \{ (\theta_{1},\theta_{2}, a, b) \, : \quad 0<\theta_{1}, \theta_{2}< 2\pi \, ,
\quad \theta_{1}+\theta_{2} < 2 \pi \, , \quad -1 < a,b < 1 \, \} \, .
\end{equation}
When $\theta_{1},\theta_{2}$, are irrational multiples of $\pi$,
and $(a,b)$ are in the neighborhoods of $a=0$, $b=0$, and $c=0$,
the results~\cref{thm_inv,thm_neu_inv}  
construct an explicit basis of non-smooth functions 
for the solutions of $\Kdir[\sigma] = \bh$,
and $\Kneu[\sigma] = \bh$, and show that this basis maps onto 
the space of boundary data given by piecewise polynomials on
each of the edges meeting at the triple junction.
However, extensive numerical studies suggest that both of these
results can be improved significantly. 
In particular, we believe that this analysis extends to 
all $(\theta_{1},\theta_{2},a,b) \in Y$, except for a set of measure
zero. 
Moreover, on the measure zero set where this basis is not sufficient,
we expect the solution to have additional logarithmic singularities;
including functions of the form $t^{\beta} \log{(t)} \bv$
should be sufficient to fix the deficiency of the basis.
We expect the analysis to be similar in spirit 
to the analysis carried out for the solution of Dirichlet
and Neumann problems for Laplace's equations
on vicinity of corners (see~\cite{serkh1,serkh2}).

In this section, we present a few open questions for further
extending the results~\cref{thm_inv,thm_neu_inv}, and present
numerical evidence to support these conjectures.

\subsection{Existence of $\beta_{i,j}$}
The solutions $\beta_{i,j}$, $i=0,1,2,\ldots$, 
$j=0,1,2$, are constructed as the implicit solutions of
$\det{\Adir(a,b,\beta)} = 0$ (recall that 
$\det{\Adir(a,b,\beta)} = \det{\Aneu(a,b,\beta)}$).
Note that $\det{\Adir(a,b,\beta)} = \sinb \cdot \alpha(a,b,c;\beta)$
where $\alpha$ is as defined in~\cref{eq:defalpha}.
From this, it follows that
$\beta_{i,0} = i$ 
always satisfies $\det{\Adir(a,b,\beta)} =0$ 
for all $\theta_{1},\theta_{2}$, and that
$\beta_{0,j} = 0$ results in three linearly independent basis
functions of the form $t^{\beta} \bv$ since $\Adir(a,b,0) = 0$.

The remaining $\beta_{i,j}$, $i=1,2,\ldots$, $j=1,2$, are constructed
in the following manner.
$\alpha(a,b,c;\beta)$ simplifies significantly along
$a=0$, $b=0$, and $c=0$, and the existence of $\beta_{i,j}$
which satisfy $\det{\Adir(a,b,\beta)}=0$ is guaranteed
based on the explicit construction detailed in~\cite{hoskins2018numerical}.
The construction then uses the implicit function theorem to extend
the existence of $\beta_{i,j}$ to a subset of $(a,b) \in (-1,1)^2$.
The implicit function theorem is a local result and 
only guarantees existence in local neighborhoods of the initial points.
However, extensive numerical evidence suggests that the $\beta_{i,j}$
are well-defined and analytic for all $(a,b) \in (-1,1)^2$ 
and all $\theta_{1},\theta_{2}$. 
In~\cref{fig:betas}, we plot a few of these functions to 
illustrate this result.

\begin{conjecture}
\label{conj1}
There exists a countable collection of $\beta_{i,j}$,
$i=1,2,\ldots,$, $j=1,2$ which satisfy
$\alpha(a,b,c;\beta_{i,j})=0$. 
Moreover, these $\beta_{i,j}$ are analytic functions
of $\theta_{1},\theta_{2},a$, and $b$, 
for all $(\theta_{1},\theta_{2},a,b) \in Y$.
\end{conjecture}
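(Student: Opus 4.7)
The plan is to upgrade the local implicit-function argument sketched in \cref{lem:betavs_existence_dir} (which gives analyticity only on the neighborhoods of the coordinate slices $a=0$, $b=0$, $c=0$) into a global statement on all of $Y$ by combining a complex-analytic root-counting argument with an analytic continuation argument. First I would extend $\beta \mapsto \alpha(a,b,c;\beta)$ to an entire function of complex $\beta$. Since $\alpha$ is a finite linear combination of products of $\sin$ and $\cos$ of linear functions of $\beta$ (with coefficients depending analytically on $a,b$, $\theta_1,\theta_2$), it has order one with exponential growth governed by the angles $\theta_1,\theta_2$ and $\pi-\theta_i$. This growth estimate is uniform for $(\theta_1,\theta_2,a,b)$ in compact subsets of $Y$.

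Next I would organize the roots by strips. Using the explicit formulas for $\alpha$ along the slice $a=b=0$ obtained in \cite{hoskins2018numerical}, one sees that the real zeros of $\alpha(0,0,0;\cdot)$ group naturally: in each vertical strip $R_k=\{\beta \in \mathbb{C}: k-\tfrac{1}{2}<\mathrm{Re}\,\beta < k+\tfrac{1}{2},\ |\mathrm{Im}\,\beta|<M\}$ (for $M$ large enough, chosen uniformly on compact subsets of the angle parameters), there are exactly two simple roots, which we label $\beta_{k,1}$ and $\beta_{k,2}$. The argument principle applied to $\partial R_k$ gives a root count that is continuous, hence locally constant, in $(\theta_1,\theta_2,a,b)$, provided $\alpha$ does not vanish on $\partial R_k$. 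The latter is a closed condition on a compact set, and one shows by a direct lower bound on $|\alpha|$ along the horizontal sides (using the imaginary-part exponential growth) and a careful perturbation of the vertical sides (using the fact that the roots on $a=b=0$ are well-separated in $\mathrm{Re}\,\beta$) that the bad set is empty on a neighborhood of the slice, and then extends to all of $Y$ by connectedness.

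The third step is to upgrade existence to analyticity. Simple roots give, by the implicit function theorem, local analytic continuations of $\beta_{i,j}$; these patch together along any path in $Y$ as long as $\partial_\beta \alpha(a,b,c;\beta_{i,j}) \neq 0$. Since $Y$ is simply connected, trivial monodromy (and hence global analyticity on $Y$) follows once one proves that along every such continuation no two branches ever coincide. I would package this as follows: define $D\subset Y$ to be the set on which some $\beta_{i,j}$ is a multiple root; on $Y\setminus D$ each branch is locally analytic, and one must show $D=\emptyset$, or at least that $D$ does not obstruct global analyticity of the individual $\beta_{i,j}$.

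The hard part will be exactly this last item, i.e., showing that branches do not merge. Numerically the authors already observed non-crossing; analytically one could try two complementary strategies. The first is a dimension/genericity argument: the condition $\alpha=\partial_\beta\alpha=0$ cuts a real-codimension-two locus in the real five-dimensional space $Y\times \mathbb{R}_\beta$, so generically $D$ is of real codimension one in $Y$, and one would need to check that it does not separate $Y$; this still permits analytic continuation around $D$ provided the monodromy is trivial, which can be checked by inspecting the Puiseux expansion of $\alpha$ at a putative double root and using the fact that $\alpha$ is real-valued on $Y\times\mathbb{R}$ (forcing the two colliding branches to be complex conjugates before and after collision, so each branch returns to itself). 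The second, cleaner but harder strategy is to exhibit an explicit factorization or monotonicity of $\alpha$ in $\beta$ (perhaps via a Sturm-type oscillation argument exploiting that $A_{\text{dir}}$ has the structure of a scattering matrix built from reflection coefficients $a,b,c \in (-1,1)$) that rules out double real roots outright. Either route, rigorously carrying out the collision analysis for all $(\theta_1,\theta_2,a,b) \in Y$ is the substantive obstacle; everything else reduces to the argument principle and the implicit function theorem.
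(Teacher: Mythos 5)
First, note that the statement you were asked to prove is stated in the paper as a \emph{conjecture}: the authors offer only numerical evidence, the local result of \cref{lem:betavs_existence_dir} (implicit function theorem applied along the one-parameter slices $a=0$, $b=0$, $c=0$, where $\alpha$ factors into the two transcendental factors analyzed in \cite{hoskins2018numerical}), and a suggested geometric reformulation in which the roots are the intersections of the curve $\gamma_{m}$ with the family of hyperboloids $H(x,y,z;a,b)$. So there is no proof in the paper to match your argument against, and your proposal should be judged on whether it closes the conjecture. It does not: you yourself identify the collision analysis as ``the substantive obstacle,'' and that obstacle is precisely the content of the conjecture. Your framework (entire extension in $\beta$, argument principle on strips to get a locally constant root count, implicit function theorem at simple roots, continuation over the simply connected $Y$) is a sensible program and is essentially an elaboration of the paper's own partial strategy, but every genuinely new step is deferred. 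In particular, ruling out double real roots of $\alpha(a,b,c;\cdot)$ (or handling them) is not a removable technicality: if two real branches collide and move off as a complex-conjugate pair, the real roots $\beta_{i,j}$ simply cease to exist locally, so the conjecture's existence claim fails there, not merely its analyticity claim; your monodromy/reality argument cannot rescue that, and your codimension-count only says the bad set is expected to be small, which is weaker than the assertion ``analytic for \emph{all} $(\theta_1,\theta_2,a,b)\in Y$.''

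There are also two concrete inaccuracies in the set-up. At the base slice $a=b=0$ one has $c=0$ and $\alpha(0,0,0;\beta)=\sin^2(\pi\beta)$, so each strip $R_k$ contains a single real \emph{double} root at $\beta=k$, not ``exactly two simple roots''; the two simple roots only split off along the one-parameter slices (e.g.\ $b=0$, $a\neq 0$, where $\alpha=\sin^2(\pi\beta)-a^2\sin^2(\beta(\pi-\theta_3))$ factors), so your strip count must be taken with multiplicity and your ``roots are well-separated on the slice'' justification for keeping $\alpha\neq 0$ on the vertical sides of $\partial R_k$ does not hold at or near the degenerate point $(a,b)=(0,0)$, nor near the boundary of the strips where branches from adjacent strips can approach the half-integer lines as $|a|,|b|\to 1$. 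Second, the conjecture asserts analyticity jointly in $\theta_1,\theta_2,a,b$ on all of $Y$, including angles that are rational multiples of $\pi$ (where, e.g., $\sin(m\theta_3)$ can vanish and the null-space/multiplicity structure degenerates); the paper's lemmas explicitly assume irrational angle ratios, and your proposal never addresses how the continuation survives these loci. In short, the proposal is a reasonable research plan, consistent in spirit with the paper's partial results, but it does not constitute a proof of \cref{conj1}: the decisive step --- excluding branch collisions uniformly on $Y$, e.g.\ via the Sturm-type or hyperboloid-intersection monotonicity argument you and the authors both gesture at --- remains open.
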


An alternate strategy for proving this result is by making the 
following observation.
For fixed $\theta_{1},\theta_{2}$, 
consider the curve $\gamma_{m}: (m,m+1) \to \mathbb{R}^3$ defined by
\begin{equation}
\gamma_{m}(\beta):= \frac{1}{\spt{\beta}}
\left( \sin{\beta(\pi-\theta_{2})}, \sin{\beta(\pi-\theta_{3})}, 
\sin{\beta(\pi-\theta_{1})} \right) \, ,
\end{equation}
where $m$ is an integer.
This defines a curve in $\mathbb{R}^{3}$ for which $|\gamma_{m}| 
\to \infty$ for each $m$.
Then consider the family of hyperboloids parameterized by $(a,b)$ 
given by
\begin{equation}
H(x,y,z;a,b) := -b(a+b)/(1+ab) x^2 -a(a+b)/(1+ab) y^2 + ab z^2 +1=0
\end{equation}
It follows immediately that the solutions to $\alpha(a,b,c;\beta) = 0$ 
can be characterized geometrically as points in the intersection of the
hyperboloid $H(x,y,z;a,b)$ with the curve $\gamma_{m}$.

\subsection{Completeness of the singular basis}
Having identified the $\beta_{i,j}$, and the corresponding null
vectors $\bv_{i,j}$ for $\Adir$, and $\bw_{i,j}$
for $\Aneu$, the 
second part of the proof shows that every set of boundary
data which is a polynomial of degree less than or equal to 
$N$ on each of the edges, has a solution to the integral
equations~\cref{eq:inteq1,eq:inteq2} in the 
$\bv_{i,j}t^{\beta_{i,j}}$ basis for $\Kdir$ 
and $\bw_{i,j}t^{\beta_{i,j}-1}$ for $\Kneu$
which agrees with the boundary data with error 
$O(t^{N+1})$.

This part of the proof relies on constructing an explicit mapping
from the coefficients of the density $\sigma$ 
in the $\bv_{i,j} t^{\beta_{i,j}}$  
to the coefficients of Taylor expansions for $\Kdir[\sigma].$
Then, along $a=0$, $b=0$, or $c=0$, based on the 
results in~\cite{hoskins2018numerical},
we show that this mapping is invertible along these edges.
It then follows from the continuity of determinants that the mapping
is invertible for open neighborhoods of the line segments $a=0$, $b=0$,
$c=0$. 
This implies that in the basis $\bv_{i,j} t^{\beta_{i,j}}$
there exists a $\sigma$  
such that  $|\Kdir[\sigma] - \bh| \leq O(t^{N+1})$,
for all boundary data $f$ 
in the space of polynomials with degree less than or
equal to $N$.

While we prove this result for an open neighborhood $(a,b)$
of the line segments $a=0$, $b=0$, $c=0$, when the angles
$\theta_{1},\theta_{2}$ are irrational multiples of $\pi$, 
we expect the bases to have this property
for all $(\theta_{1},\theta_{2},a,b) \in Y$ except for a measure
zero set. 
Moreover, this measure zero set is the set 
of $(\theta_{1},\theta_{2},a,b)$
for which the multiplicity of $\beta_{i,j}$ as a repeated root of 
$\det{\Adir(a,b,\beta_{i,j})} = 0$ is not the same as
the dimension of the null space of 
$\Adir(a,b,\beta_{i,j})$.

\begin{conjecture}
\label{conj:thm_inv}
Suppose that~\cref{conj1} holds, i.e.
$\beta_{i,j}: Y \to \mathbb{R}$ are analytic functions. 
Suppose further that $h_{(i,j)}^{k}$, 
$(i,j) \in \mathcal{J}, k=0,1,2,\ldots N$ 
are real constants, and
suppose that 
\begin{equation}
\begin{aligned}
h_{(i,j)}(t) &= \sum_{k=0}^{N} h_{(i,j)}^{k} t^{k} \, , \end{aligned}
\label{eq:def_h_conj}  
\end{equation}
$0<t<$1.
Then there exists a measure zero set $S$ such that for 
all $(\theta_{1},\theta_{2},a,b) \in Y \setminus S$ 
the following result holds.
There exist constants $p_{i,j}$, $i=0,1,\ldots N$, $j=0,1,2$,
such that
\begin{equation}
\sigma = 
\begin{bmatrix}
\sigma_{1,2}(t) \\
\sigma_{2,3}(t) \\
\sigma_{3,1}(t)
\end{bmatrix} = \sum_{i=0}^{N} \sum_{j=0}^{2} 
p_{i,j} \bv_{i,j} t^{\beta_{i,j}}
\, ,
\end{equation}
satisfies
\begin{equation}
\max_{(i,j)\in \mathcal{J}} 
\left| h_{(i,j)} - \Kdir[\sigma]_{(i,j)} \right|
\leq C t^{N+1} \, ,
\end{equation}
for $0<t<1$, where $C$ is a constant.
\end{conjecture}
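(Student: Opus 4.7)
The plan is to leverage the real-analytic dependence of all constituent objects on the parameters $(\theta_1,\theta_2,a,b) \in Y$ and to combine it with the invertibility already established in \cref{thm_inv} on a non-trivial open subset, in the spirit of a real-analytic identity principle.

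First, I would define the linear map $M_N : \mathbb{R}^{3(N+1)} \to \mathbb{R}^{3(N+1)}$ which sends a vector of coefficients $(p_{i,j})_{0\le i\le N,\; 0\le j\le 2}$ to the collection of the first $N+1$ Taylor coefficients at $t=0$ of $\Kdir[\sigma]|_{\Gamma_{(\ell,m)}}$ on each of the three edges $(\ell,m) \in \mathcal{J}$, where $\sigma = \sum_{i,j} p_{i,j}\bv_{i,j} t^{\beta_{i,j}}$. By \cref{thm_fwd} each $\Kdir[\bv_{i,j} t^{\beta_{i,j}}]_{(\ell,m)}$ is genuinely analytic in $t$ on $(0,1)$, so its Taylor coefficients are well-defined. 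The explicit formulas computing these coefficients (coming from the integral representations used in the proof of \cref{thm_fwd} and expanded around the junction) are rational in the quantities $\sin(\beta_{i,j}(\pi-\theta_k))$, $(\beta_{i,j}-k)^{-1}$, etc., and are polynomial in the entries of $\bv_{i,j}$ and in $a,b$. Granting \cref{conj1}, both $\beta_{i,j}$ and $\bv_{i,j}$ are real-analytic on all of $Y$, so the matrix of $M_N$ has entries that are real-analytic functions of $(\theta_1,\theta_2,a,b)$ on $Y$, modulo a discrete set of apparent singularities (poles $\beta_{i,j} = k$) which can be shown to be removable exactly as in the proof of \cref{thm_inv}.

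Second, I would consider $\det M_N$ as a real-analytic function on the connected open set $Y$. By \cref{thm_inv}, there exists a non-empty open subset $\tilde S_N \subset Y$ (intersected with the subset of irrational angles) on which $M_N$ is invertible; in particular $\det M_N$ does not vanish identically on $Y$. Because a non-trivial real-analytic function on a connected open subset of $\mathbb{R}^4$ has zero set of Lebesgue measure zero, the set
\begin{equation}
S := \{(\theta_1,\theta_2,a,b) \in Y \,:\, \det M_N(\theta_1,\theta_2,a,b) = 0\}
\end{equation}
has measure zero. For any $(\theta_1,\theta_2,a,b) \in Y \setminus S$ the map $M_N$ is invertible, so the coefficients $p_{i,j}$ are uniquely determined by the Taylor coefficients $h_{(i,j)}^k$ of the prescribed polynomial data. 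Setting $\sigma = \sum_{i,j} p_{i,j}\bv_{i,j} t^{\beta_{i,j}}$ then produces an image $\Kdir[\sigma]_{(i,j)}$ whose Taylor expansion at $t=0$ agrees with $h_{(i,j)}$ through order $N$; since each $\Kdir[\sigma]_{(i,j)}$ is analytic on $(0,1)$ by \cref{thm_fwd}, the remainder is $O(t^{N+1})$ uniformly in $(i,j) \in \mathcal{J}$, giving the desired bound.

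The main obstacle I expect is identifying the zero set of $\det M_N$ with the geometrically natural degeneracy described in the discussion preceding the conjecture, namely the points of $Y$ at which the algebraic multiplicity of some $\beta_{i,j}$ as a root of $\det \Adir(a,b,\cdot)=0$ exceeds the geometric multiplicity of the corresponding null space of $\Adir(a,b,\beta_{i,j})$. At such points one expects the basis $\{\bv_{i,j} t^{\beta_{i,j}}\}$ to fail to be complete and logarithmic corrections $t^{\beta_{i,j}} \log(t)\, \bv$ to appear, analogous to the classical corner analysis of \cite{serkh1,serkh2}. Tracking the precise relationship between the vanishing locus of $\det M_N$, the merging of distinct $\beta_{i,j}$, and the Jordan structure of the symbol matrix is the delicate piece that goes beyond the soft analyticity argument above; handling it rigorously likely requires a careful local Puiseux-type expansion of $\beta_{i,j}(\theta,a,b)$ at the crossing loci together with the construction of generalized null-vectors (and the associated $\log$ densities) to remove the degeneracy.
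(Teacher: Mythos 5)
Note first that the statement you set out to prove is presented in the paper only as a conjecture: the authors give numerical evidence, and their proven result (\cref{thm_inv}) holds only on an open neighborhood of the segments $a=0$, $b=0$, $c=0$ with $\theta_i/\pi$ irrational. So there is no proof in the paper to compare against, and your argument must stand on its own; your strategy (push the invertibility of the coefficient-to-Taylor-coefficient matrix, the paper's $\bB$, from the region covered by \cref{thm_inv} to all of $Y$ minus the zero set of an analytic determinant) is indeed the natural continuation of the appendix argument, but as written it has genuine gaps. (i) \cref{conj1} grants analyticity of the $\beta_{i,j}$ only; it says nothing about the null vectors $\bv_{i,j}$. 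A global analytic (or even continuous) choice of $\bv_{i,j}$ fails precisely at the loci where $\Adir(a,b,\beta_{i,j})$ drops rank or distinct $\beta_{i,j}$ collide, so your assertion that the entries of $M_N$ are real-analytic on all of $Y$ is unsupported; one would need, say, an adjugate/cross-product formula for the null vectors together with an argument that it does not degenerate identically on any relevant component. (ii) By \cref{lem:pot_fwd} the entries contain the factors $1/(\beta_{i,j}-k)$ and $1/\sin(\pi\beta_{i,j})$, and at parameter values where $\beta_{i,1}$ or $\beta_{i,2}$ crosses an integer the expansion itself changes form (the $\bCdiag$ block and a possible $t^{m}\log t$ term appear). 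Hence $\det M_N$ is not a globally defined analytic function on $Y$, and saying the singularities are "removable exactly as in the proof of \cref{thm_inv}" begs the question: the paper's proof operates in a region where \cref{lem_transm} keeps the $\beta_{i,j}$ strictly away from the integers. If instead you work on $Y$ minus these singular loci, the identity-principle step requires non-vanishing of the determinant on \emph{every} connected component of that set, not just the component meeting $\tilde{S}_N$; if you clear the poles by multiplying columns by $\prod_k(\beta_{i,j}-k)$ to obtain a determinant analytic on all of $Y$, you must still verify that non-vanishing of the cleared determinant at a point off the pole locus yields solvability of the original moment equations there.

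On the positive side, your final step is sound: once $M_N$ is invertible at a given parameter point, matching the first $N+1$ Taylor coefficients on each edge combined with the analyticity of $\Kdir[\bv_{i,j}t^{\beta_{i,j}}]$ from \cref{thm_fwd} does give the uniform $O(t^{N+1})$ bound, and the irrational-angle restriction inherited from \cref{thm_inv} is harmless since rational angles form a measure-zero set. Also note that the hard Puiseux/Jordan-structure analysis you defer is not actually required by the statement: the exceptional set $S$ is allowed to contain the collision and integer-crossing loci, so the conjecture would follow from the soft argument alone \emph{provided} items (i)--(ii) are repaired. The real gap is therefore the unproved global analyticity and well-definedness of the determinant whose zero set you propose to take as $S$, not the missing logarithmic analysis on $S$ itself.
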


In~\cref{fig:betas}, we plot sections of the zero measure set 
on which~\cref{conj:thm_inv} does not hold.

\begin{figure}
\begin{center}
\includegraphics[width=\linewidth]{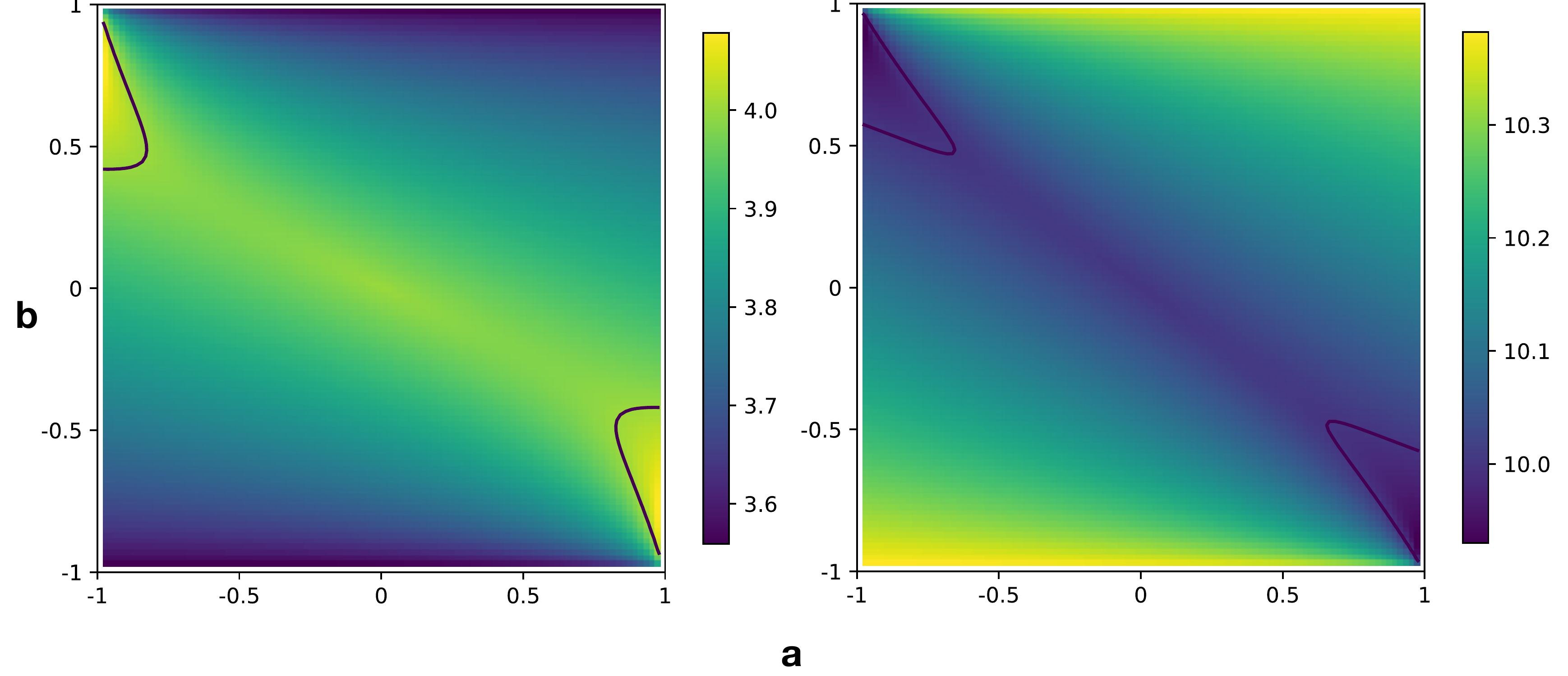}
\end{center}
\caption{Plots for $\beta(a,b)$ 
which satisfy $\det{\Adir(a,b,\beta)}=0$ at a triple
junction with angles $\theta_{1} = \pi/\sqrt{2}$, $\theta_{2} = \pi/\sqrt{3}$. 
$\beta(0,0)=4$ for the figure on the left, and $\beta(0,0) = 10$ for figure
on the right. In both of the figures, the solid black lines indicate sections
of the conjectured
measure zero set $S$ defined in~\cref{conj:thm_inv}.} 
\label{fig:betas}
\end{figure}

% flatex input end: [04conj.tex]

%%%%%%%%%%%%%%%%%%%%%%%%%%%%%%%%%%%%%%%%%%%%%%%%%%%%%%%%%%%%
% flatex input: [05numerical.tex]
\section{Discretization of~\cref{eq:inteq1,eq:inteq2}\label{sec:numdis}}
In this section we discuss a numerical method for solving~\cref{eq:inteq1,eq:inteq2} 
for the unknown 
densities $\sigma,\rho$ which exploits the analysis of
 their behavior in the vicinity of triple
junctions.
There are two general approaches for discretizing these
integral equations:
Galerkin methods, in which the densities $\rho$ and $\sigma$ 
is represented directly in terms of appropriate basis functions,
and Nystr\"{o}m methods, where the solution is represented
in terms of its values at specially chosen discretization nodes.
In this paper, we use a Nystr\"{o}m discretization 
for solving~\cref{eq:inteq1}, though we note that the 
expansions in~\cref{thm_inv,thm_neu_inv}
can also be used to construct efficient Galerkin discretizations. 

In~\cite{bremer2010universal}, the authors developed a Nystr\"{o}m discretization for resolving the singular behavior of solutions to integral equations
in the vicinity of corners. 
In this approach, the authors obtain a basis of solutions to the integral equation in the vicinity of the corner
by solving a small number of local problems.
Based on these families of solutions, discretization nodes capable of interpolating the span of these solutions, coupled with quadratures for handling far-field interactions (inner products of the basis of solutions with smooth functions), and
special quadratures for handling near interactions (for resolving the near singular behavior of the kernel in the vicinity of the corner) are developed. 
This approach was later specialized for the solution of Laplace's equation on polygonal domains to obtain universal discretization
nodes, and quadrature rules~\cite{bremer2010efficient}. 

Recent advances in the analysis of integral equations for Laplace's equation have
provided analytic representations of solutions to integral equations
in the vicinity of the corners~\cite{serkh1,serkh2}, obviating the need for obtaining the span of solutions in the vicinity of corners through numerical means. Based on the approach above, these analytical results have been exploited to construct universal discretization and quadratures for solutions in vicinity of corners~\cite{jeremydis}. 
We briefly discuss the construction of the Nystr\"{o}m discretization in~\cite{jeremydis} below.
Let $\mathcal{F}$ denote the family of functions
\begin{equation}
\mathcal{F} = \{t^{\beta} \quad \text{for all } \beta \in \{0\} \cup [1/2,50],
\quad 0<t<1 \} \, .
\end{equation}
Then there exist $t_{j} \in [0,1]$, $w_{j}>0$, an orthogonal basis $\phi_{j}(t)$, $j=1,2,\ldots \kjer=36$, and a $\kjer \times \kjer$ matrix $V$ whose condition number is $O(1)$, with the following features.
For any $f\in \mathcal{F}$, there exists $c_{j}$, such that 
\begin{equation}
\left|f(t) - \sum_{j=1}^{\kjer} c_{j} \phi_{j}(t) \right|_{\bL^{2}[0,1]} < \eps \, .
\end{equation}
Let $f_{j}=f(t_{j}) \sqrt{w_{j}}$ denote the samples of the function at the discretization nodes scaled by the square root of 
the quadrature weights. 
The matrix $V$ maps $f_{j}$ to its coefficients $c_{j}$ in the $\phi_{j}$ basis.
Finally the weights $w_{j}$ are such that
\begin{equation}
\left|\int_{0}^{1} f(t) dt - \sum_{j=1}^{\kjer} f_{j} \sqrt{w_{j}} \right| =\left|\int_{0}^{1} f(t) dt - \sum_{j=1}^{\kjer} f(t_{j}) w_{j} \right|  \leq \eps \, .
\end{equation}
Specialized quadrature rules for handling the near-singular interaction between corner panels which
meet at the same vertex are also constructed.
The Dirichlet problem for Laplace's equation can then be discretized using panels
with scaled Gauss-Legendre nodes for panels which are away from corners, and using scaled nodes $t_{j}$
for panels at corners.
 
In the vicinity of triple junctions, the behavior of the solution $\sigma$ of~\cref{eq:inteq1} 
can be represented to high-order as a linear combination of functions in $\mathcal{F}$. 
Thus the discretization for the Dirichlet problem discussed above can be used to obtain a Nystr\"{o}m discretization
for~\cref{eq:inteq1}. Unfortunately, the same is not true when solving~\cref{eq:inteq2},
since the singular behavior
of $\rho$ is not contained in the span of $\mathcal{F}$.
In particular, the leading order singularity
in $\rho$ is of the form $t^{\beta}$ 
where $\beta \in (-\frac{1}{2},0)$.
The nature of the singularity of $\rho$ is similar
to the singular behavior of solutions
to integral equations corresponding to the Neumann problem
on polygonal domains.

Recall that ~\cref{eq:inteq2} is 
the adjoint of~\cref{eq:inteq1}.
Thus, formally, one could use the transpose
of the Nystr\"{o}m discretization of~\cref{eq:inteq1} to
solve~\cref{eq:inteq2}. Specifically, if
 $\overline{\rho} = \{\rho_{j}\}_{j=1}^{N}$ are 
the unknown values of $\rho$ at the discretization nodes, 
and $\overline{g} = \{g_{j}\}_{j=1}^{N}$ 
denote the samples of the boundary data for~\cref{eq:inteq2}
at the discretization nodes, then
we solve the linear system 
\begin{equation}
M^{T} \overline{\rho} = \overline{g} \, , 
\end{equation}
where $M$ is the matrix corresponding to
Nystr\"{o}m discretization of~\cref{eq:inteq1}.
The solution $\overline{\rho}$ is a high-order accurate
weak solution for the density $\rho$ which can be used
to evaluate the solution to~\cref{eq:inteq2} 
accurately away from the corner panels of the boundary $\Gamma$. 
This weak solution can be further refined to
obtain accurate approximations of the potentials in 
the vicinity of corner panels through solving a sequence
small linear systems for updating the solution $\rho_{j}$
in the vicinity of the corner panels.
This procedure is discussed in detail in~\cite{hoskins2020discretization}.

\section{Numerical examples \label{sec:numres}}
We illustrate the performance of the algorithm with
several numerical examples. 
In each of the problems let $\Omega_{0}$ denote the exterior
domain and $\Omega_{i}, i=1,2,\ldots N_{r}$ denote the interior
regions.
Let $c_{j,k}$, $k=1,2,\ldots 10$, denote points outside of the region
$\Omega_{j}$ for $j=1,2,\ldots N_{r}$.
The results in~\cref{subsec:cond1,subsec:cond2} have been 
computed using dense linear algebra routines, while the results in~\cref{subsec:acc,subsec:app}
have been computed using GMRES where the matrix vector product computation
has been accelerated using fast multipole methods~\cite{greengard1986fast}.

\subsection{Accuracy\label{subsec:acc}}
In order to demonstrate the accuracy of our method 
we solve the PDE with boundary data corresponding to known
harmonic functions using our discretization of the integral 
equation formulation.
We set $u_{j}(x) = \sum_{k=1}^{10} \log |x - c_{j,k}|$ and set
$u\equiv 0$ for $x \in \Omega_{0}$. 
We then compute the boundary data
\begin{equation}
f_{i} = \mu_{\ell(i)} u_{\ell(i)} - \mu_{r(i)} u_{r(i)} \, ,\quad
g_{i} = \mu_{\ell(i)} \frac{\partial u_{\ell(i)}}{\partial n} - \mu_{r(i)} \frac{\partial u_{r(i)}}{\partial n} \, ,
\end{equation}
and solve for $\sigma,\rho$. 
Given the discrete solution for $\sigma,\rho$, we compare the computed solution 
and plot the error in the computed
at targets in the interior of each of the regions.
In~\cref{fig:2tri,fig:4dia}, we demonstrate the results for two sample
geometries.
\begin{remark}
Note that we do not use special quadratures for handling near boundary targets which is responsible
for the loss of accuracy close to the boundary. 
For panels away from the corner, the potential at near boundary targets 
can be computed accurately using several standard methods such as 
Quadrature by expansion, or product quadrature (see~\cite{qbxquadrature,helsingnear,barnettnear}).
In order to evaluate the solution at points lying close to a corner panel a different 
approach is required. A detailed description of a computationally efficient algorithm for evaluating the
solution accurately arbitrarily close to a corner is presented in~\cite{hoskins2020discretization}.
\end{remark}

\begin{figure}
\begin{center}
\includegraphics[width=\linewidth]{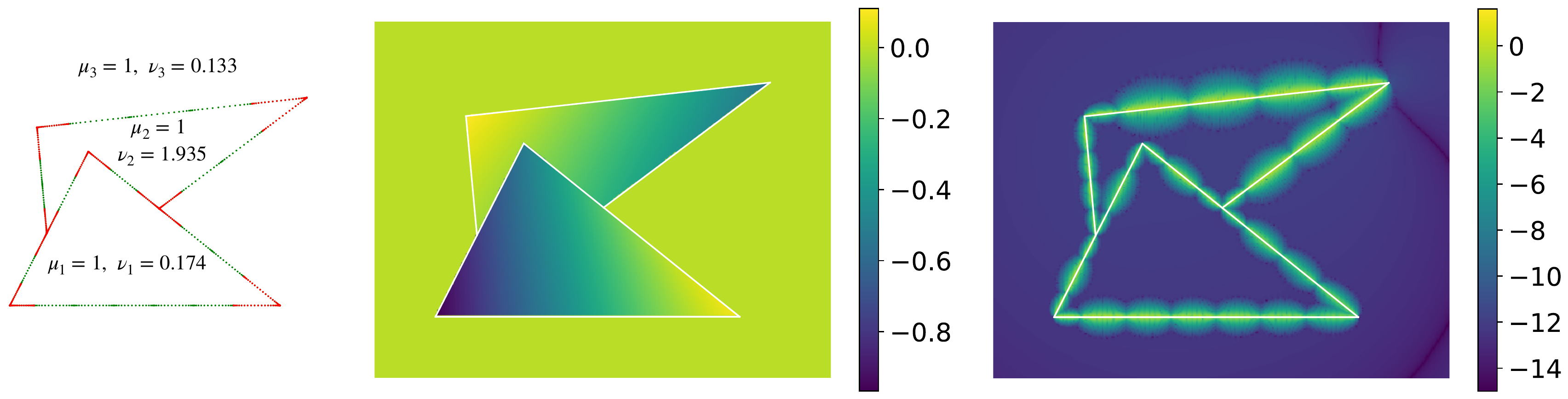}
\end{center}
\caption{(left): Discretization of geometry along with material parameters $\mu_{i},\nu_{i}$, 
the panels at corners/triple junctions are indicated in red, (center) exact solution $u_{j}$ in the domains, 
and (right) $\log_{10}$ of the absolute error in the solution. The geometry consists 7 vertices, 8 edges, 
3 regions, and is discretized with $768$ points. In order for the
solution of the linear system to converge to a residual
of $10^{-16}$, GMRES required $35$ iterations for~\cref{eq:inteq1}, and $48$ iterations for~\cref{eq:inteq2}}
\label{fig:2tri}
\end{figure}

\begin{figure}
\begin{center}
\includegraphics[width=\linewidth]{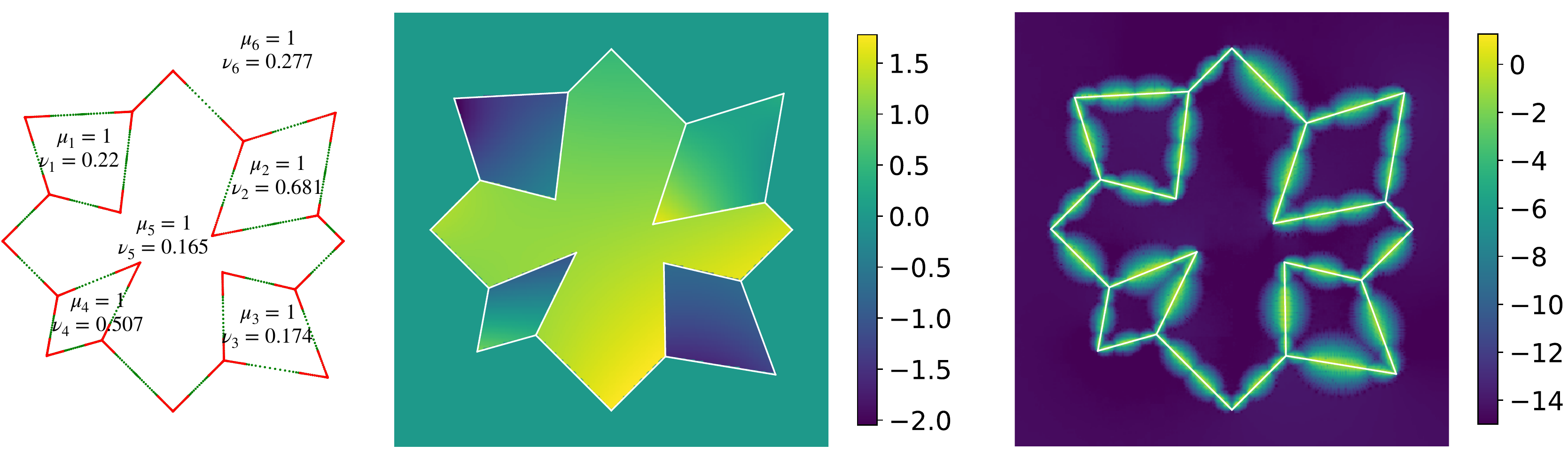}
\end{center}
\caption{(left): Discretization of geometry along with material parameters $\mu_{i},\nu_{i}$, 
the panels at corners/triple junctions are indicated in red, (center) exact solution $u_{j}$ in the domains, 
and (right) $\log_{10}$ of the absolute error in the solution. The geometry consists 20 vertices, 24 edges, 
5 regions, and is discretized with $1952$ points. In order 
for the solution of the linear system to converge to a residual
of $10^{-16}$, GMRES required $22$ iterations for~\cref{eq:inteq1}, and $28$ iterations for~\cref{eq:inteq2}}
\label{fig:4dia}
\end{figure}

\subsection{Condition number dependence on $\mu,\nu$ \label{subsec:cond1}}
In this section, we discuss the dependence of the condition number
of the discretized linear systems as a function of the material parameters
of the regions. 
Recall that the condition number of a linear system $A$,
which we denote by $\kappa(A)$, is the ratio 
of the largest singular value $\smax$ to the the smallest singular value $\smin$, 
i.e. $\kappa(A) = \smax/\smin$.
As discussed in section~\cref{sec:mainres}, for fixed angles the integral equation
and the analytical behavior of integral equations~\cref{eq:inteq1,eq:inteq2}
are solely a function of $d_{(1,2)},d_{(2,3)},d_{(3,1)}$ defined
in~\cref{eq:dijdef}.
Furthermore, $d_{(1,2)}$ can be expressed in terms
of $d_{(3,1)},d_{(2,3)}$ which are contained in the interval
$(-1,1)$.
As before, let $a=d_{(3,1)}$ and $b=d_{(2,3)}$.
Since the discrete linear system corresponding to~\cref{eq:inteq2},
is the adjoint of the linear system corresponding to~\cref{eq:inteq1},
it suffices to study the condition number 
for either linear system.

In~\cref{fig:matcond_nu}, we plot 
the condition number of the discretization of~\cref{eq:inteq1} 
as we vary $(a,b) \in (-1,1)^2$, by holding the values
of $\mu$ in each of the regions to be fixed.
In particular, we set $\mu_{1} = 0.37$,
$\mu_{2} = 0.81$, $\mu_{3} = 1$, and $\nu_{3} = 0.77$.
$\nu_{1}$, $\nu_{2}$ can then be defined in terms of
$(a,b)$ as 
\begin{equation}
\nu_{1} = \frac{\nu_{3}\mu_{1}}{\mu_{3}}\frac{1+a}{1-a} \, ,
\mu_{2} = \frac{\nu_{3}\mu_{2}}{\mu_{3}}\frac{1-b}{1+b} \, .
\end{equation}

We note that the problem is well-behaved for almost all values
of $(a,b)$ and becomes ill-conditioned as we approach the line
$b=-1$ and $a=1$.
This behavior is expected since the underlying physical
problem also has rank-deficiency along these limits 
since these values of the parameters correspond to interior Neumann problems
in regions $1$ and $2$ respectively. 

\begin{figure}
\begin{center}
\includegraphics[width=0.7\linewidth]{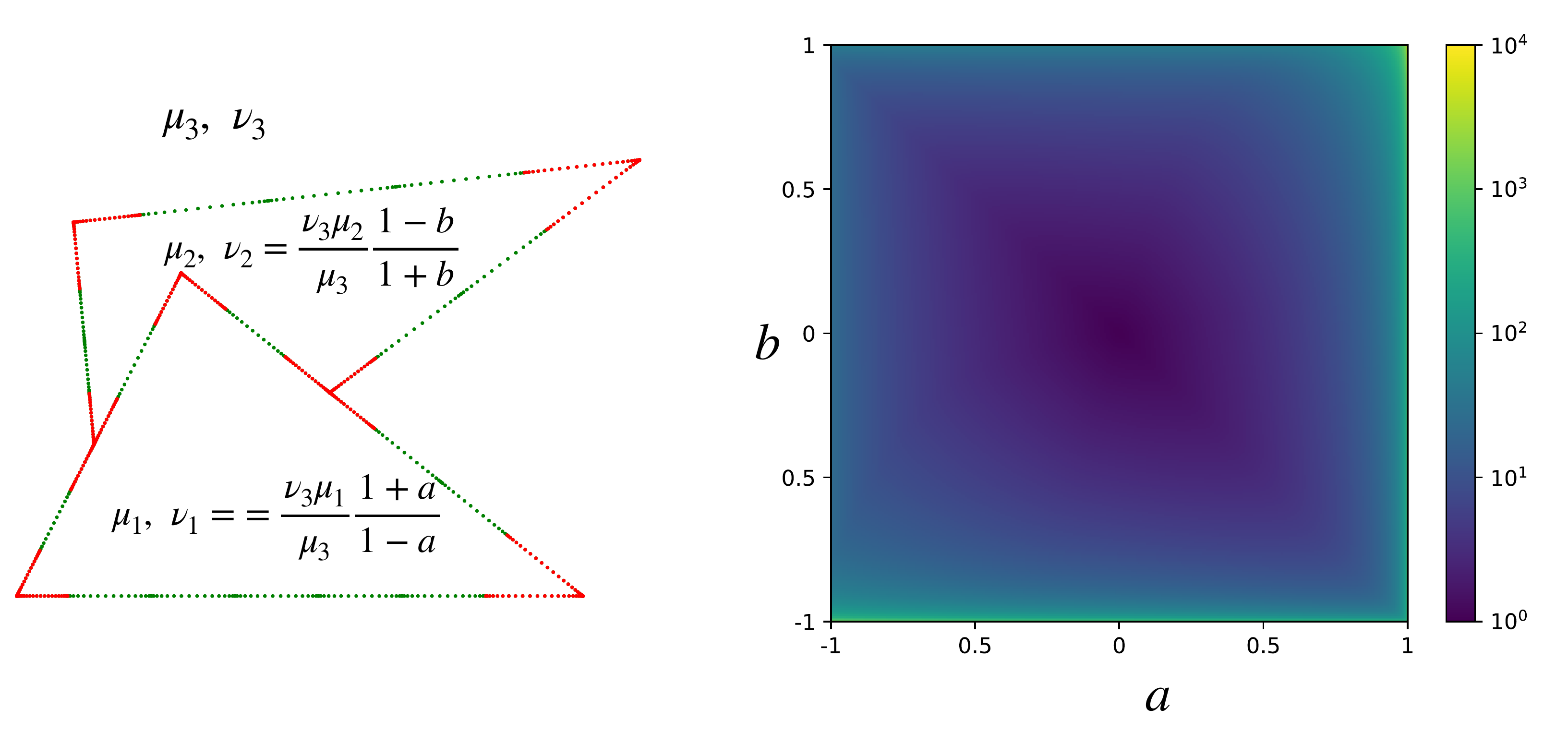}
\end{center}
\caption{(left) Discretization of geometry and material parameters $\mu,\nu$
as a function of $a,b$. 
Condition number as a function of $(a,b)$ with
$\mu_{1}=0.37$, $\mu_{2}=0.81$, $\mu_{3}=1$, and $\nu_{3}=0.77$.}
\label{fig:matcond_nu}
\end{figure}

\subsection{Condition number dependence on angles at the triple junction \label{subsec:cond2}}
In this section we discuss the the dependence of the condition number
of the discretized linear systems as a function of the angles at the triple
junction.
Let $\theta_{1},\theta_{2},\theta_{3}$, denote the angles
at the triple junction, then $\theta_{1} + \theta_{2} + \theta_{3} = 2\pi$.
The three angles at any triple junction can be parameterized by 
$\theta_{1},\theta_{2}$ in the simplex 
$\{(\theta_{1},\theta_{2}) : \theta_{1} > 0, \theta_{2} > 0 \,, \theta_{1}+\theta_{2}< 2\pi\}$.
Suppose that we split this simplex into $4$ regions as shown 
in~\cref{fig:cond_thet}. By symmetry it suffices to vary the angles $(\theta_{1},\theta_{2}) \in (0,\pi)^2$.

The physical problem as either of the angles approach $0$ or $2\pi$
becomes increasingly ill-conditioned due to close-to-touching interactions on 
the entire edge (not just near the corner). 
In order to avoid these issues and to automate geometry generation
as we vary the angles $\theta_{1},\theta_{2}$, we use two different
types of geometries for regions I and IV which are shown
in~\cref{fig:cond_thet}.
\begin{figure}
\begin{center}
\includegraphics[width=\linewidth]{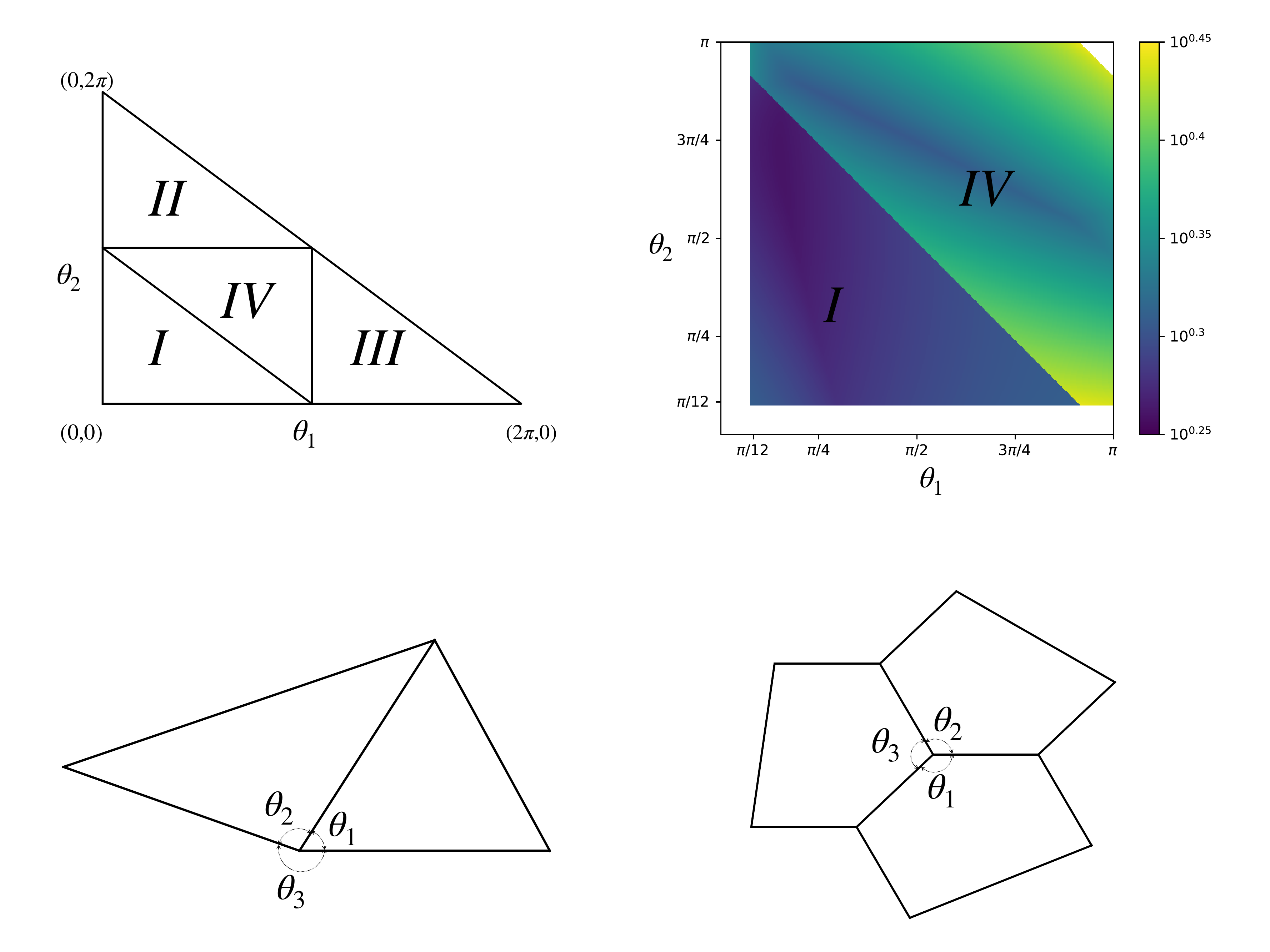}
\end{center}
\caption{(top left) Regions I-IV in $(\theta_{1},\theta_{2})$
simplex;
(top right) condition number of discretized linear system
corresponding to~\cref{eq:inteq1} as a function
of $(\theta_{1},\theta_{2})$;
(bottom left) sample domain for $(\theta_{1},\theta_{2})$ in 
region I; 
(bottom right) sample domain for $(\theta_{1},\theta_{2})$
in region IV.}
\label{fig:cond_thet}
\end{figure}

Resolving the close-to-touching interactions has
numerical consequences as well; due to the increased number
of quadrature nodes 
required as the angles tend to $0$ 
in the universal quadrature rules.
In order for the universal quadrature rules
to remain efficient, they are generated for the range 
$(\theta_{1},\theta_{2}) \in (\frac{\pi}{12}, 2\pi-\frac{\pi}{12})$.
Regions with narrower angles should be handled on case to case basis and 
region with careful
discretization of the boundary coupled with
special purpose quadrature rules 
which account for the specific singular
behavior of the solutions in the vicinity 
of triple junctions.
In~\cref{fig:cond_thet}, the top right missing corner corresponds to $\theta_{3} \in (0,\pi/12)$. 

Referring to~\cref{fig:cond_thet}, we observe that the condition number of the discrete linear
systems varies mildly as we vary the angles $\theta_{1},\theta_{2}$,
with a maximum condition number of $2.8$.
The discontinuity in the plot is explained by the different choice of
geometries for regions I, IV.

\subsection{Application - Polarization computation \label{subsec:app}}
In this section, we demonstrate the efficiency of our approach
for computing polarization tensors for a perturbed hexagonal
lattice with cavities.
The polarization computation corresponds to the following particular
setup of the triple junction problem, $\mu_{i} = 1$,
$f_{i} = 0$, 
$\nu_{i} = \eps_{i}$, where $\eps_{i}$ denotes
the permittivity of the medium, and 
$g_{1}(\bx) = (\eps_{\ell(i)} - \eps_{r(i)}) n_{1}(\bx)$ or $g_{2}(\bx) = (\eps_{\ell(i)} - \eps_{r(i)}) n_{2}(\bx)$, where $\bx=(x_{1},x_{2}) \in \Gamma_{i}$, $\bn(\bx) = (n_{1}(\bx),n_{2}(\bx))$, and $\eps_{\ell(i)}, \eps_{r(i)}$ are the conductivities of the regions on either side of the edge $\Gamma_{i}$.
If $u_{1}$ is the solution corresponding to $g_{1}$
and $u_{2}$ is the solution corresponding to $g_{2}$, 
then the polarization tensor $P$ is the $2\times 2$ matrix given by
\begin{equation}
P = \begin{bmatrix}
\int_{\Gamma} x_{1} \cdot \frac{\partial u_{1}}{\partial n} \, ds &
\int_{\Gamma} x_{2} \cdot \frac{\partial u_{1}}{\partial n} \, ds \\
\int_{\Gamma} x_{1} \cdot \frac{\partial u_{2}}{\partial n} \, ds &
\int_{\Gamma} x_{2} \cdot \frac{\partial u_{2}}{\partial n} \, ds 
\end{bmatrix} \, .
\end{equation}

Note that in this particular setup, we only need to solve the problem
corresponding to the operator $\Kneu$, as the solution $\sigma$
for $\Kdir[\sigma] = 0$ is $\sigma=0$.
Let $\rho_{1},\rho_{2}$ denote the solutions of~\cref{eq:inteq2}
corresponding to boundary data $g_{1}$ and $g_{2}$
respectively.
Using properties of the single layer potential, the integrals 
of the polarization tensor can be expressed in terms of $\rho$
as 
\begin{equation}
P = \begin{bmatrix}
\int_{\Gamma} x_{1} \cdot \rho_{1} \, ds &
\int_{\Gamma} x_{2} \cdot \rho_{1} \, ds \\
\int_{\Gamma} x_{1} \cdot \rho_{2} \, ds &
\int_{\Gamma} x_{2} \cdot \rho_{2} \, ds 
\end{bmatrix} \, .
\end{equation}

We compare the efficiency of our approach to RCIP which to the best of our knowledge
is the state of the art method for such problems. The geometry is generated using 
a regular hexagonal lattice inside the unit square 
whose vertices are perturbed in a random direction by a tenth of the side length, and the
permittivity $\eps$ is region $i$ is given by $10^{c_{i}}$ where $c_{i}$ is a uniform
random number between $[-1,1]$. The choice of parameters for the problem setup
is identical to the setup in section 11 in~\cite{helsing2008corner}. 

We discretize the geometry with $3$ panels
on each edge of roughly equal size, and the reference solution is computed using
$5$ panels on each edge. The geometry contains $10688$ vertices, $15855$ edges, 
and $5189$ regions. There are $1395240$ degrees of freedom for the coarse discretization 
(approximately $88$ degrees of freedom per edge)
and $1902600$ degrees of freedom for the reference solution. 
These discretizations required $131$ iterations for GMRES to converge to a relative residual
of $10^{-16}$, and the absolute error in the polarization tensor when compared to
the reference solution is $5.1 \times 10^{-12}$. 
In comparison, RCIP using approximately $71$ degrees of freedom per edge in a hexagonal
lattice with $5293$ inclusions obtained 
an accuracy of $2 \times 10^{-14}$ in computing the $2,2$ entry of the polarization matrix and required
$105$ GMRES iterations to converge.

The polarization tensor for this configuration, correct to 13 significant
digits, is given by
\begin{equation}
P = \begin{bmatrix}
-0.038291586646 & -0.004056508957 \\
-0.004056508957 & 0.045585776453 
\end{bmatrix} \, .
\end{equation}

\begin{remark}
We note that the performance of our approach is close to current state of the art methods such as RCIP~\cite{helsing2013solving}. In our examples, further improvements in speed can be achieved using additional compression techniques to reduce the degrees of freedom in the resulting linear system~\cite{bremer2015high,greengard2009fast}.
\end{remark}

\begin{figure}
\begin{center}
\includegraphics[width=\linewidth]{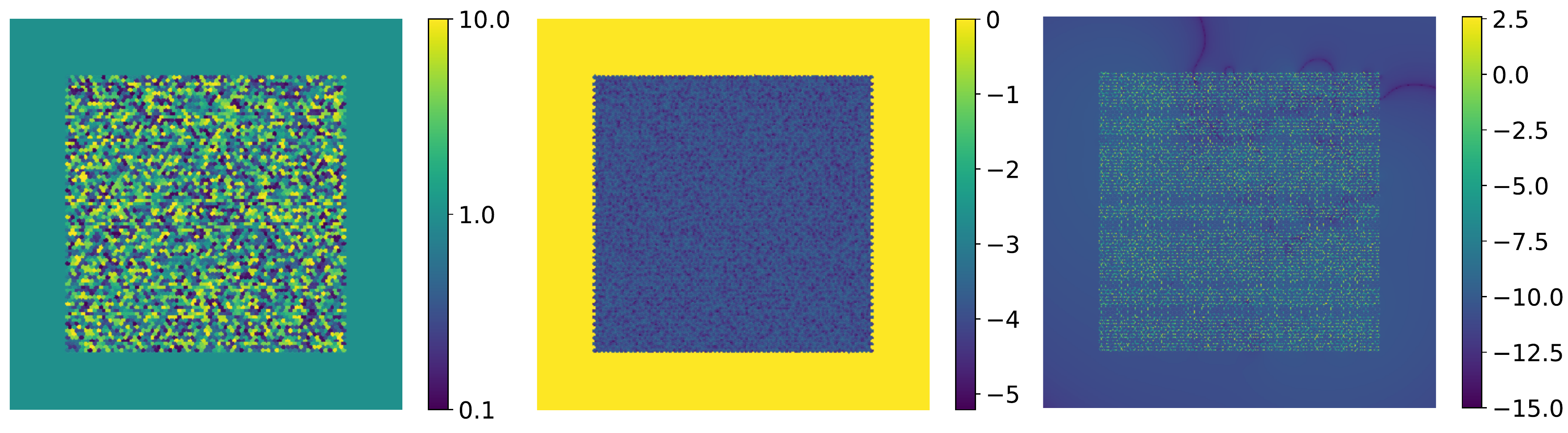}
\end{center}
\caption{(left): Material parameters $\nu_{i}$ for each of the regions, 
(center) exact solution $u_{j}$ in the domains, 
and (right) $\log_{10}$ of the absolute error in the solution. 
The geometry consists $10688$ vertices, $15855$ edges, 
$5189$ regions, and is discretized with $1395240$ points. 
In order for the solution of the linear system to converge to a residual
of $10^{-16}$, GMRES required $138$ iterations for~\cref{eq:inteq1}, and $130$ 
iterations for~\cref{eq:inteq2} in the accuracy tests, 
and $131$ and $130$ iterations (for $g_{1}$, and $g_{2}$ respectively) 
for~\cref{eq:inteq2} in computing the polarization
tensors.}
\label{fig:tetris}
\end{figure}

% flatex input end: [05numerical.tex]

%%%%%%%%%%%%%%%%%%%%%%%%%%%%%%%%%%%%%%%%%%%%%%%%%%%%%%%%%%%%
% flatex input: [06conc.tex]
\section{Concluding remarks and future work \label{sec:conc}}
In this paper we analyze the systems of boundary integral equations which arise 
when solving the Laplace transmission problem in composite media consisting of
regions with polygonal boundaries. Our discussion is focused on the particular 
case of composite media with triple junctions (points at which three distinct
 media meet) though our analysis extends to higher-order junctions in a natural way.  

We show that under some restrictions the solutions to the boundary integral equations 
corresponding to a triple junction is well-approximated by a linear combination 
of powers $t^{\beta_j}$ where $t$ denotes the distance from the corner along the edge,
and the $\beta_j,$ $j=1,2,\dots$ is a countable collection of real numbers obtained by
solving a certain equation depending only on the material properties of the media 
and the angles at which the interfaces meet.

In addition to the theoretical interest of the result, our analysis also enables 
an easy construction of near-optimal discretizations for triple junctions.
In particular, RCIP which is the leading method for solving electrostatic problems
on multiple junction interfaces, requires approximately $71$ discretization nodes per edge
to compute solutions to near machine precision accuracy, where as our proposed discretization
achieves an accuracy of $5 \times 10^{-12}$ using roughly $88$ discretization nodes per edge.
Finally, we illustrate the properties of this discretization with a number of 
numerical examples.

The results of this paper admit a number of natural extensions and generalizations. Firstly,
the analysis outlined in this paper extends almost immediately to junctions involving greater
numbers of media. However, the construction of an efficient Nystr\"{o}m discretization
of higher-order junctions requires special care since the 
solutions to corresponding integral equations are not $\mathbb{L}^{2}$ functions on the boundary.
(in fact the solutions are known to be $\mathbb{L}^{1}$ functions on the boundary~\cite{helsing2011effective}).
Secondly, with a small modification a 
similar analysis should be possible for boundary integral equations arising from triple 
junction problems for other partial differential equations such as the Helmholtz equation,
Maxwell's equations, and the biharmonic equation. This line of inquiry is being vigorously 
pursued and will be reported at a later date.
  
Finally, a similar approach will also work for generating discretizations of triple junctions in 
three dimensions. This is particularly valuable since geometric singularities in 
three-dimensions can often result in prohibitively large linear systems. Accurate discretization 
with few degrees of freedom would greatly improve the size and complexity of systems which 
could be simulated.

% flatex input end: [06conc.tex]

%%%%%%%%%%%%%%%%%%%%%%%%%%%%%%%%%%%%%%%%%%%%%%%%%%%%%%%%%%%%
% flatex input: [07ackn.tex]
\section{Acknowledgments}
The authors would like to thank Alex Barnett, Charles Epstein, 
Leslie Greengard, Vladimir Rokhlin, and Kirill Serkh 
for many useful discussions. 

% flatex input end: [07ackn.tex]

%%%%%%%%%%%%%%%%%%%%%%%%%%%%%%%%%%%%%%%%%%%%%%%%%%%%%%%%%%%%
\appendix
% flatex input: [appendix-a.tex]
\newpage
\section{Analysis of $\Kdir$ \label{sec:appendix-dir}}

First we present the proof of~\cref{thm_fwd}. 
In order to do so, we require the following technical lemma
which describes the double layer potential defined on 
a straight line segment with density $s^{\beta}$ 
at an arbitrary point near the boundary.
Here $s$ is the distance along the segment.
\begin{lemma}
\label{lem:dledge}
Suppose that $\Gamma$ is an edge of unit length oriented
along an angle $\theta$, parameterized by 
$s(\cost,\sint)$, $0<s<1$. 
Suppose that $\bx= t(\cos{(\theta+\theta_{0})}, \sin{(\theta+\theta_{0})})$
(see~\cref{fig_illus_ints})
where $0<t<1$, and $\bx \not \in \Gamma$.
Suppose that $\sigma(s) = s^{\beta}$ for $0<s<1$, where
$\beta \geq 0$.
If $\beta$ is not an integer, then
\begin{equation}
\cD_{\Gamma}[\sigma](\bx) = \frac{\sin{(\beta(\pi-\theta_{0}))}}{2\sinb}t^{\beta}
+\frac{1}{2\pi}\sum_{k=1}^{\infty} \frac{\sin{(k\theta_{0})}}{\beta-k} t^{k} \, .
\end{equation}
If $\beta=m$ is an integer, then
\begin{equation}
\cD_{\Gamma}[\sigma](\bx) = \frac{(\pi-\theta_{0})\cos{(m \theta_{0})}}{2\pi}t^{m}
-\frac{\sin{(m\theta_{0})}}{2\pi} t^{m} \log{(t)}
+\frac{1}{2\pi}\sum_{\substack{k=1\\k \neq m}}^{\infty} 
\frac{\sin{(k\theta_{0})}}{m-k} t^{k} \, .
\end{equation}
\end{lemma}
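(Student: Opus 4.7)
The plan is to reduce the double-layer evaluation to a one-dimensional integral with an algebraic kernel, expand it as a series in $t$, and identify the singular $t^{\beta}$ coefficient via a classical trigonometric identity. I would begin by parameterizing $\Gamma$ as $\bx'(s) = s(\cost, \sint)$ with induced normal $\bn = (-\sint, \cost)$. Two short trigonometric computations give $\bn \cdot (\bx - \bx'(s)) = t \sin{\theta_{0}}$ and $\|\bx'(s) - \bx\|^{2} = s^{2} - 2st\cos{\theta_{0}} + t^{2}$, so after factoring the denominator as $(s - te^{i\theta_{0}})(s - te^{-i\theta_{0}})$ and taking the imaginary part of a partial-fraction decomposition one arrives at
\begin{equation}
\cD_{\Gamma}[\sigma](\bx) = \frac{1}{2\pi} \operatorname{Im} \int_{0}^{1} \frac{s^{\beta}}{s - te^{i\theta_{0}}} \, ds.
\end{equation}

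Next I would split the interval at $s = t$ and expand the factor $(s - te^{i\theta_{0}})^{-1}$ in an absolutely convergent geometric series on each piece: $-\sum_{k \geq 0} s^{k} (te^{i\theta_{0}})^{-k-1}$ for $s \in (0,t)$ and $\sum_{k \geq 0} (te^{i\theta_{0}})^{k} s^{-k-1}$ for $s \in (t,1)$. Integrating term by term and taking imaginary parts produces, for $\beta \notin \mathbb{Z}_{\geq 1}$, the decomposition
\begin{equation}
\cD_{\Gamma}[\sigma](\bx) = \frac{1}{2\pi} \sum_{k=1}^{\infty} \frac{\sin(k \theta_{0})}{\beta - k} t^{k} + \frac{t^{\beta}}{2\pi} \sum_{k=1}^{\infty} \sin(k \theta_{0}) \left( \frac{1}{\beta + k} - \frac{1}{\beta - k} \right).
\end{equation}
The coefficients of the regular power series already match the claim. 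The $t^{\beta}$ coefficient collapses by partial fractions to $-\pi^{-1} \sum_{k \geq 1} k \sin(k \theta_{0})/(\beta^{2} - k^{2})$, and invoking the classical Mittag-Leffler/Fourier identity $\sin(\beta(\pi - \theta_{0}))/\sin(\pi \beta) = (2/\pi) \sum_{k \geq 1} k \sin(k\theta_{0})/(k^{2} - \beta^{2})$, valid for $0 < \theta_{0} < 2\pi$ and non-integer $\beta$, identifies this coefficient as $\sin(\beta(\pi - \theta_{0}))/(2 \sin(\pi \beta))$. This completes the non-integer case.

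The integer case $\beta = m$ then follows by taking the limit $\beta \to m$ in the identity just obtained. The $k = m$ term of the power series and the $t^{\beta}$ term each develop a simple pole in $\beta - m$, but their residues cancel exactly. Writing $t^{\beta} = t^{m}(1 + (\beta - m) \log t + O((\beta - m)^{2}))$ and applying L'Hopital to $\sin(\beta(\pi - \theta_{0}))/\sin(\pi \beta)$ near $\beta = m$, using $\sin(m(\pi - \theta_{0})) = (-1)^{m+1} \sin(m \theta_{0})$ and $\cos(m(\pi - \theta_{0})) = (-1)^{m} \cos(m \theta_{0})$, leaves precisely the claimed logarithmic term $-\sin(m \theta_{0}) t^{m} \log(t)/(2\pi)$ and the analytic correction $(\pi - \theta_{0}) \cos(m \theta_{0}) t^{m}/(2\pi)$.

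The main technical subtlety will be the interchange of summation and integration through the split point $s = t$, where the two geometric series degenerate. I would handle this by introducing a small gap $[t - \delta, t + \delta]$, integrating termwise on $[0, t - \delta] \cup [t + \delta, 1]$, and then letting $\delta \to 0^{+}$ under dominated convergence, using that $s^{\beta}$ remains integrable. A secondary concern is the invocation of the Mittag-Leffler identity; while classical, it could alternatively be derived in situ by computing $\int_{0}^{\infty} u^{\beta}/(u^{2} - 2u \cos{\theta_{0}} + 1) \, du$ via contour integration (convergent in the strip $-1 < \beta < 1$) and extending by analyticity in $\beta$.
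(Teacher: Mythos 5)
The paper never actually proves \cref{lem:dledge} --- it is stated as a technical ingredient and used via \cref{lem:pot_fwd}, with the computation left to the reader (and to the reference on the two-media transmission problem) --- so there is no internal argument to compare against; judged on its own, your derivation is correct and lands exactly on the stated formulas. The reduction to $\tfrac{1}{2\pi}\operatorname{Im}\int_0^1 s^{\beta}(s-te^{i\theta_0})^{-1}\,ds$ is right (the direction of the normal, equivalently the sense in which $\theta_0$ is measured, is fixed by \cref{fig_illus_ints} rather than by the lemma text, and your choice is the one consistent with the stated signs), the two geometric expansions give $\tfrac{1}{2\pi}\sum_{k\ge1}\tfrac{\sin(k\theta_0)}{\beta-k}t^{k}+\tfrac{t^{\beta}}{2\pi}\sum_{k\ge1}\sin(k\theta_0)\bigl(\tfrac{1}{\beta+k}-\tfrac{1}{\beta-k}\bigr)$, the Fourier/Mittag--Leffler identity $\sum_{k\ge1}\tfrac{2k\sin(k\theta_0)}{k^{2}-\beta^{2}}=\pi\sin(\beta(\pi-\theta_0))/\spt{\beta}$, valid for $0<\theta_0<2\pi$ and non-integer $\beta$, converts the singular coefficient to $\sin(\beta(\pi-\theta_0))/(2\sinb)$, and your $\beta\to m$ limit (pole of the $k=m$ term cancelling the pole of the $t^{\beta}$ term, with $t^{\beta}=t^{m}(1+(\beta-m)\log t+\cdots)$) reproduces the logarithmic case, including $m=0$.

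Two technical points should be tightened. First, the $\delta$-gap-plus-dominated-convergence justification of the termwise integration is not quite the right tool: after integrating term by term on $[0,t-\delta]$ you must send $\delta\to0$ through a series whose limit, $t^{\beta}\sum_{k}e^{-i(k+1)\theta_0}/(\beta+k+1)$, converges only conditionally, so there is no summable dominating sequence; the radial limit is instead handled by Abel's theorem (or summation by parts). Simpler still, no gap is needed: since $\bx\notin\Gamma$ forces $\theta_0\neq 0,2\pi$, one has $|s-te^{i\theta_0}|\ge t\,|\sin\theta_0|$, so on each of $(0,t)$ and $(t,1)$ the partial sums of the geometric series times $s^{\beta}$ are uniformly bounded by $2s^{\beta}/(t\,|\sin\theta_0|)\in\mathbb{L}^{1}(0,1)$, and dominated convergence applied to the partial sums legitimizes the interchange directly. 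Second, obtaining the integer case by letting $\beta\to m$ tacitly uses that $\cD_{\Gamma}[s^{\beta}](\bx)$ is continuous in $\beta$ at $\beta=m$ for the fixed off-boundary target; this follows at once from dominated convergence in the defining integral (the kernel is bounded because $\bx\notin\Gamma$), together with uniform-in-$\beta$ convergence of the $k\neq m$ part of the series for fixed $t<1$, but it should be stated since it is what licenses equating the limit of the formula with the value of the potential.
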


\begin{figure}
\begin{center}
\includegraphics[width=0.3\linewidth]{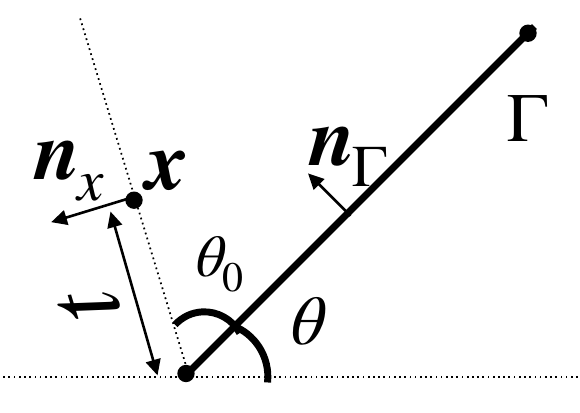}
\caption{Illustrative figure for geometry in~\cref{lem:dledge}}
\label{fig_illus_ints}
\end{center}
\end{figure}

In the following lemma, 
we compute the potential $\Kdir[\bv t^{\beta}]$, 
in the vicinity of triple junction with angles $\theta_{1},\theta_{2}, 
\theta_{3}$, material parameters $\bd = 
(d_{(1,2)},d_{(2,3)},d_{(3,1)})$, 
where $\bv \in \mathbb{R}^{3}$ and $\beta$ are constants
(see~\cref{fig:trip_geom}).

\begin{lemma}
\label{lem:pot_fwd}
Consider the geometry setup of the single vertex problem
presented in~\cref{sec:mainres}.
For a constant vector $\bv \in \mathbb{R}^{3}$, suppose 
that the density on the edges is of the form
\begin{equation}
\sigma =
\begin{bmatrix}\sigma_{1,2} \\ \sigma_{2,3} \\ \sigma_{3,1}
\end{bmatrix} = 
\bv t^{\beta}
\end{equation}
If $\beta$ is not an integer, then 
\begin{equation}
\Kdir[\sigma] = 
-\frac{1}{2\sinb} \Adir(d_{3,1},d_{1,2},\beta) \bv
t^{\beta}
+ \sum_{k=1}^{\infty} \frac{1}{\beta-k}\bC(\bd,k) \bv 
t^{k} \, ,
\end{equation}
where
$\Adir$ is defined in~\cref{eq:defAdir} 
and 
\begin{equation}
\label{eq:defcsmooth}
\bC(\bd,k)
=
\frac{1}{2 \pi}
\begin{bmatrix}
0 & -d_{(1,2)}\sin{(k\theta_{2})} & d_{(1,2)}\sin{(k\theta_{1})} \\
d_{(2,3)}\sin{(k\theta_{2})} & 0 & -d_{(2,3)}\sin{(k\theta_{3})} \\
-d_{(3,1)}\sin{(k\theta_{1})} & d_{(3,1)}\sin{(k\theta_{3})} & 0
\end{bmatrix} \, .
\end{equation}
If $\beta = m$ is an integer, then
\begin{equation}
\begin{aligned}
\Kdir[\sigma] &= 
-\frac{(-1)^{m}}{2\pi}\Adir(d_{3,1},d_{1,2},m) \bv
t^{m} \log{(t)}
+ \sum_{\substack{k=1\\k\neq m}}^{\infty} \frac{1}{m-k}\bC(\bd,k)\bv t^{k} + 
\bCdiag(\bd,m) \bv t^{m}\, ,
\end{aligned}
\end{equation}
where
{\small
\begin{equation}
\label{eq:defcsmoothint}
\bCdiag(\bd,m) =
-\frac{1}{2\pi}
\begin{bmatrix}
\pi & d_{(1,2)}(\pi-\theta_{2}) \cos{(m\theta_{2})} & -d_{(1,2)}(\pi-\theta_{1}) 
\cos{(m \theta_{1})} \\
-d_{(2,3)}(\pi-\theta_{2}) \cos{(m\theta_{2})} &  \pi &
d_{(2,3)}(\pi-\theta_{3})\cos{(m\theta_{3})} \\
d_{(3,1)}(\pi-\theta_{1})\cos{(m\theta_{1})} & -d_{(3,1)}(\pi-\theta_{3})
\cos{(m\theta_{3})} & \pi
\end{bmatrix} \, .
\end{equation}
}
\end{lemma}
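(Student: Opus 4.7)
The proof will be a direct computation from the definition
\begin{equation*}
\Kdir[\sigma]_{(i,j)} = -\frac{1}{2}\sigma_{(i,j)} + d_{(i,j)} \sum_{(\ell,m)\in \mathcal{J}} \mathcal{D}_{(\ell,m);(i,j)}[\sigma_{(\ell,m)}] \, ,
\end{equation*}
applied to $\sigma_{(\ell,m)}(t) = v_{(\ell,m)} t^{\beta}$. The plan is to expand each of the nine operators $\mathcal{D}_{(\ell,m);(i,j)}$ by invoking~\cref{lem:dledge}. The three self-interactions $\mathcal{D}_{(i,j);(i,j)}$ vanish identically, since for $\bx,\by$ on the same straight segment the factor $\bn(\bx)\cdot(\by-\bx)$ in the kernel is zero. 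Thus only the six cross-interaction terms contribute and each is of the precise form handled by~\cref{lem:dledge} after identifying the signed angle $\theta_0$ from the source edge $\Gamma_{(\ell,m)}$ to the observation point on $\Gamma_{(i,j)}$. Reading this off from~\cref{fig:trip_geom}, $\theta_0$ lies in $\{\pm(\pi-\theta_1), \pm(\pi-\theta_2), \pm(\pi-\theta_3)\}$, with the sign dictated by which side of the source edge's normal the target edge is on.

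For non-integer $\beta$, I would collect the $t^{\beta}$ coefficients. The identity contribution is $-v_{(i,j)}/2 = -\sin(\pi\beta) v_{(i,j)}/(2\sin(\pi\beta))$, while each cross-interaction contributes $d_{(i,j)} v_{(\ell,m)} \sin(\beta(\pi-\theta_0))/(2\sin(\pi\beta))$ by~\cref{lem:dledge}. Factoring $-1/(2\sin(\pi\beta))$ and matching row by row against~\cref{eq:defAdir}, using the relation $d_{(2,3)} = -(d_{(3,1)}+d_{(1,2)})/(1+d_{(3,1)}d_{(1,2)})$ from~\cref{rem:abc} for the middle row, recovers the leading term $-\Adir(d_{(3,1)},d_{(1,2)},\beta)\bv\, t^{\beta}/(2\sin(\pi\beta))$. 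The higher-order $t^{k}$ contributions from the tail of~\cref{lem:dledge} are each of the form $d_{(i,j)} v_{(\ell,m)} \sin(k\theta_0)/(2\pi(\beta-k))$, and summing over $(\ell,m)$ reproduces exactly the entries of $\bC(\bd,k)/(\beta-k)$ from~\cref{eq:defcsmooth} (the diagonal zeros of $\bC$ reflecting again the vanishing of self-interactions).

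For integer $\beta=m$, I would apply the second part of~\cref{lem:dledge}. The logarithmic $-\sin(m\theta_0)/(2\pi)\, t^{m}\log(t)$ contributions aggregate across the six cross-interactions, and the identity $\sin(m(\pi-\theta_0)) = (-1)^{m+1}\sin(m\theta_0)$ converts them into the claimed $-(-1)^{m}\Adir(d_{(3,1)},d_{(1,2)},m)\bv\, t^{m}\log(t)/(2\pi)$. The remaining $t^{m}$ contributions come from the $(\pi-\theta_0)\cos(m\theta_0)/(2\pi)$ prefactor in~\cref{lem:dledge} and from the identity term $-1/2$, and collecting them row by row produces $\bCdiag(\bd,m)\bv$ as defined in~\cref{eq:defcsmoothint}. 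The $k\neq m$ tail terms match $\bC(\bd,k)\bv/(m-k)$ exactly as in the non-integer case.

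The main obstacle is the sign and angle bookkeeping for the six cross-interactions. Each pair $((\ell,m),(i,j))$ requires careful determination of $\theta_0$ in the sense of~\cref{lem:dledge}: its magnitude is $\pi-\theta_{j}$ for the appropriate $j \in \{1,2,3\}$, but its sign depends on the orientation of $\bn_{(\ell,m)}$ and on which side of $\Gamma_{(\ell,m)}$ the edge $\Gamma_{(i,j)}$ lies. Getting all six signs right is the only subtle step; once they are catalogued, the remainder of the argument is purely mechanical term-matching against~\cref{eq:defAdir,eq:defcsmooth,eq:defcsmoothint}, aided by the trigonometric identities $\sin(\beta(\pi-\theta)) = -\sin(\beta(\theta-\pi))$ and $\sin(m(\pi-\theta)) = (-1)^{m+1}\sin(m\theta)$.
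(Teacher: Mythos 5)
Your approach is the same as the paper's: the printed proof is a single line stating that the result follows from repeated application of \cref{lem:dledge} to compute the cross terms $\cD_{(\ell,m);(i,j)}$, with the self-interactions vanishing on straight segments, exactly as you lay out. One minor correction to your angle bookkeeping: the angle $\theta_{0}$ from a source edge to a target edge at the junction equals $\theta_{j}$ or $2\pi-\theta_{j}$ (so that $\pi-\theta_{0}=\pm(\pi-\theta_{j})$ and $\sin(k\theta_{0})=\pm\sin(k\theta_{j})$), not $\pm(\pi-\theta_{j})$ itself; with this identification your term-by-term matching against \cref{eq:defAdir,eq:defcsmooth,eq:defcsmoothint} goes through as described.
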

\begin{proof}
The result follows from repeated application of~\cref{lem:dledge}
for computing $\cD_{(l,m):(i,j)} \sigma_{(i,j)}$.
\end{proof}
The proof of~\cref{thm_fwd} then follows immediately 
from~\cref{lem:pot_fwd}.

We now turn our attention to the 
proof of~\cref{lem:betavs_existence_dir}, which
provides a construction of $\beta,\bv$ satisfying the
conditions of~\cref{thm_fwd}.
In order to do that, we first observe that if one of $a,b$, or $c$ is $0$,
then the expression of $\det{\Adir}$ simplifies significantly, 
and there exists an explicit construction of $\beta$ satisfying
$\det{\Adir}(a,b,\beta) = 0$.
Recall that we use interchangeably use the following variables
for the material properties $(a,b,c) = (d_{3,1},d_{1,2},d_{2,3})$.
Having established the existence of analytic $\beta,\bv$ on a 
1-D manifold which is a subset $(a,b) \in (-1,1)^2$, 
we now analytically continue these values of $\beta,\bv$
to carve out the open region $S$ on which $\beta,\bv$ can be
analytically extended. 
This proof is discussed in~\cref{subsec:beta_v_existence}.

\subsection{Existence of $\beta,\bv$ satisfying~\cref{thm_fwd}}
\label{subsec:beta_v_existence}
The determinant of the matrix $\Adir(a,b,\beta)$ 
is given by
\begin{equation}
\det{\Adir(a,b,\beta)} = \spt{\beta} \alpha\left(a,b,c; 
\beta \right) \, , 
\label{eq:detAdir}
\end{equation}
where
$c = -(a+b)/(1+ab)$, and 
\begin{equation}
\alpha(a,b,c;\beta) = \sptsq{\beta} + bc \sinsq{\beta(\pi-\theta_{2})} +
ac \sinsq{\beta(\pi-\theta_{3})} + ab \sinsq{\beta(\pi-\theta_{1})} \, .
\label{eq:defalpha}
\end{equation}
Given the formula above, 
for  all $(a,b) \in (-1,1)^2$
when $\beta=m\geq 0$ is an integer, $\det{\Adir(a,b,\beta)} = 0$.
When $m\neq 0$, the matrix $\Adir$ has rank-2, since the matrix is
similar to an anti-symmetric matrix and is not identically zero.
The null vector $\bv$ of $\Adir(a,b,m)$ is given by
$\bv_{m}= [\sin{(m\theta_{3})},\sin{(m\theta_{1})},\sin{(m\theta_{2})}]^{T}$,
i.e., the pair $(m,\bv_{m})$ always satisfies~\cref{eq:defAdir}.
When $\beta=0$, $\Adir(a,b,\beta) = 0$ 
and hence for any $\bv \in \bR^{3}$, the pair $\beta,\bv$
satisfies~\cref{eq:defAdir}.
Based on this observation we set
\begin{equation}
\begin{aligned}
\beta_{m,0} = m\, , &\quad \bv_{m,0} = 
[ 
\sin{(m\theta_{3})},
\sin{(m\theta_{1})},
\sin{(m\theta_{2})}]^{T} \, , \quad S_{m,0} = (-1,1)^2\\
\beta_{0,0} = 0 \, , &\quad \bv_{0,0} = [1,0,0]^{T} \, , \quad S_{0,0} = (-1,1)^2 \\
\beta_{1,0} = 0 \, ,  &\quad \bv_{1,0} = [0,1,0]^{T} \, , \quad S_{0,1} = (-1,1)^2\\
\beta_{2,0} = 0 \, , &\quad \bv_{2,0} = [0,0,1]^{T} \, , \quad S_{0,2} = (-1,1)^2.
\end{aligned}
\end{equation}

We now turn our attention to constructing the remaining 
$\beta_{i,j}$, 
the corresponding vectors $\bv_{i,j}$,
and their regions of analyticity
$S_{i,j}$, $i=1,2,\ldots $, $j=1,2$. 
From~\cref{eq:detAdir}, 
the remaining values of $\beta_{i,j}$ as a function of the
material parameters $(a,b)$ are defined implicitly
via the roots of the equation $\alpha(a,b,c(a,b);\beta_{i,j}(a,b)) = 0$,
where $c=-(a+b)/(1+ab)$ and $\alpha$ is defined in~\cref{eq:defalpha}.

It turns out that the implicit solutions $\beta(a,b)$ of
$\alpha(a,b,c(a,b);\beta(a,b)) = 0$,
are known when $a=0$, $b=0$, or $c=0$. 
This gives us an initial value for defining $\beta_{i,j}$  
in order to apply the implicit function theorem, and 
extend it to a region containing the segments $a=0$, $b=0$, or $c=0$.
Given this strategy, 
let $R_1,\dots,R_6 \subset (-1,1)\times(-1,1)$ be defined as follows (see~\cref{fig:power_analytic})
\begin{align}
R_1 &= \{(x,0) : x>0 \},\\
R_2 &= \{(-x,0) : x>0 \},\\
R_3 &= \{(0,x) : x>0 \},\\
R_4 &= \{(0,-x) : x>0 \},\\
R_5 &= \{(-x,x) : x>0 \},\\
R_6 &= \{(x,-x) : x>0 \}.
\end{align}
\begin{figure}[!h]
\centering
\includegraphics[width=0.4\linewidth]{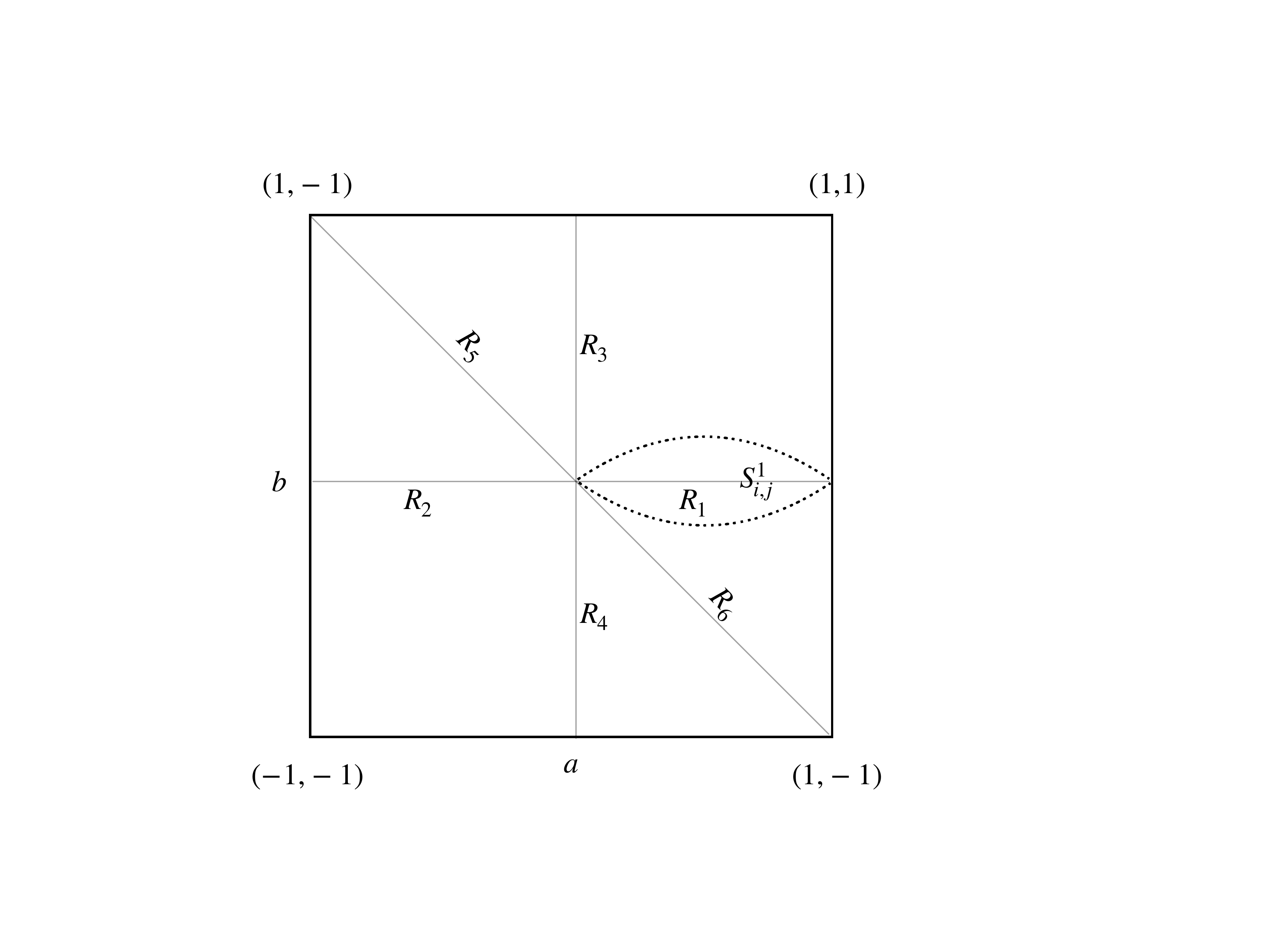}
\caption{Illustration of the edge segments $R_{i}$, $i=1,2,\ldots 6$, and
a typical region of analyticity of $\beta_{i,j}$ denoted by 
$S_{i,j}^{1}$.}
\label{fig:power_analytic}
\end{figure}

In the following, we will consider only the segment $R_1$; 
and construct an open region $S_{i,j}^{1} \subset (-1,1)^2$ which contains
$R_{1}$ on which we define a family of functions 
$\beta_{i,j}(a,b): S^{1}_{i,j} \to \mathbb{R}$,
$j=1,2$, which satisfy the conditions of~\cref{lem:betavs_existence_dir}.
Analogous results hold for the open sets containing the
remaining segments $R_{2}, R_{3}, \ldots R_{6}$ with almost identical proofs.
The region of analyticity for $\beta_{i,j}$ is then given by
$S_{i,j} = \cup_{k=1}^{6} S_{i,j}^{k}$.

\begin{definition}
For $(a,0) \in R_1$ and $i=1,2\dots$ 
let $\beta_{i,1}(a,0)$ be the solution to the equation
\begin{align}
\sin{(\pi \beta_{i,1})} =  -a \sin{(\beta_{i,1}(\pi-\theta_3))}
\end{align}
such that 
\begin{align}
\lim_{a \to 0} \beta_{i,1}(a,0) = i.
\end{align}
Similarly, for $i=1,2,\dots$ let $\beta_{i,2}(a,0)$ 
be the solution to the equation
\begin{align}
\sin{(\pi \beta_{i,2})} =  a \sin{(\beta_{i,2}(\pi-\theta_3))}
\end{align}
such that 
\begin{align}
\lim_{a \to 0} \beta_{i,2}(a,0) = i.
\end{align}
\end{definition}

The existence of $\beta_{i,j}$ for $i=1,2,\dots$ and $j=1,2$ 
satisfying these conditions is guaranteed by the following 
~\cref{lem_transm}, proved in \cite{hoskins2018numerical}.

\begin{lemma}\label{lem_transm}
Suppose that $\delta \in \mathbb{R}$, $0<|\delta| <1$ and 
$\theta \in (0,2\pi)$ and $\theta/\pi$ is irrational. 
Consider the equations
$$\sin(\pi z) = \pm\delta \sin(z(\pi-\theta)) \, .$$
Then there exist an countable collection of 
functions $z_i^\pm(\delta),$ $i=1,2,\dots$ such that
\begin{enumerate}
\item $\sin^2(\pi z_i^\pm(\delta)) = \delta^2 \sin^2(z_i^\pm(\delta)(\pi-\theta))$ for all $\delta\in [0,1],$ and $i=1,2,\dots$
\item the functions $z_i^\pm$ are analytic in $(0,1)$,
\item $\lim_{\delta\to 0} z_i^\pm(\delta) = i,$
\item $z_i^+(\delta)>i$ and $z_i^-(\delta)<i$ for all $\delta \in (0,1)\,.$
\end{enumerate}
\end{lemma}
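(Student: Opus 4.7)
The plan is to apply the implicit function theorem to the two holomorphic equations $F^\pm(z,\delta) := \sin(\pi z) \mp \delta\sin(z(\pi-\theta))$ at each base point $(z,\delta)=(i,0)$, and then to extend the resulting local branches to all $\delta\in(0,1)$ by ruling out critical points via a Pythagorean-identity computation.

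First, I would set up the local existence. Since $F^\pm(i,0)=0$ and $\partial_z F^\pm(i,0)=(-1)^i\pi\neq 0$, the implicit function theorem produces analytic functions $\hat z_i^\pm(\delta)$ in a complex neighborhood of $\delta=0$ satisfying $F^\pm(\hat z_i^\pm(\delta),\delta)=0$ and $\hat z_i^\pm(0)=i$. Differentiating implicitly and using the identity $\sin(i(\pi-\theta))=-(-1)^i\sin(i\theta)$ gives $d\hat z_i^\pm/d\delta\big|_{\delta=0}=\mp\sin(i\theta)/\pi$, which is nonzero because $\theta/\pi$ is irrational and $i\neq 0$. The two branches therefore leave $z=i$ in opposite real directions, and I would relabel them as $z_i^+$ and $z_i^-$ according to whether the branch satisfies $z>i$ or $z<i$ for small $\delta>0$.

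Second, to extend analytically to all of $(0,1)$, I would show that $F^\pm$ and $\partial_z F^\pm$ cannot vanish simultaneously for any real $z$ when $\delta\in(0,1)$. If they did, then squaring yields $\sin^2(\pi z)=\delta^2\sin^2(z(\pi-\theta))$ and $\pi^2\cos^2(\pi z)=\delta^2(\pi-\theta)^2\cos^2(z(\pi-\theta))$. Substituting $\cos^2(\pi z)=1-\sin^2(\pi z)$ and using the factorization $\pi^2-(\pi-\theta)^2=\theta(2\pi-\theta)>0$ leads to
\begin{equation}
\sin^2(z(\pi-\theta)) \;=\; \frac{\pi^2-\delta^2(\pi-\theta)^2}{\delta^2\,\theta(2\pi-\theta)},
\end{equation}
and a direct check shows the right-hand side strictly exceeds $1$ whenever $\delta<1$, a contradiction. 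Hence every real root of $F^\pm$ is a simple root, and the analytic branch $\hat z_i^\pm$ continues through any $\delta_0\in(0,1)$; a standard open–closed argument then extends it to all of $(0,1)$.

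Third, to pin down the branch globally and finish the proof, I would apply the intermediate value theorem to $F^\pm(\cdot,\delta)$ on $[i-\tfrac12,\,i+\tfrac12]$: at the endpoints $|\sin(\pi z)|=1$ dominates $|\delta\sin(z(\pi-\theta))|<1$, so a real root exists in this interval, and by simplicity it coincides with the continued branch. The branch cannot cross $z=i$ on $(0,1)$ because that would force $\delta\sin(i(\pi-\theta))=0$, impossible since $\delta>0$ and $\sin(i(\pi-\theta))\neq 0$ by irrationality of $\theta/\pi$. Item 1 of the lemma then follows by squaring $F^\pm=0$, item 2 from the implicit function theorem, item 3 from $\hat z_i^\pm(0)=i$, and item 4 from the preceding sign argument. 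The main obstacle is the global non-vanishing of $\partial_z F^\pm$, which is precisely where the hypothesis $|\delta|<1$ is essential: the Pythagorean estimate collapses at $\delta=1$, consistent with branches potentially merging or developing critical points at the boundary of the parameter interval.
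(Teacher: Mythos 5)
The paper itself does not prove \cref{lem_transm}: it is quoted from the reference \cite{hoskins2018numerical}, so there is no in-paper argument to compare against. Your proposal supplies a correct, self-contained proof along the natural lines. The local step is right: $F^\pm(i,0)=0$, $\partial_z F^\pm(i,0)=(-1)^i\pi\neq 0$, and the first-order coefficient $\mp\sin(i\theta)/\pi$ is nonzero because $\theta/\pi$ is irrational, so the two branches leave $z=i$ in opposite directions and may be labeled by their position relative to $i$ (item 4 only constrains position, so this relabeling is legitimate). Your exclusion of simultaneous vanishing of $F^\pm$ and $\partial_z F^\pm$ is also correct: it forces $\sin^2\bigl(z(\pi-\theta)\bigr)=\bigl(\pi^2-\delta^2(\pi-\theta)^2\bigr)/\bigl(\delta^2\theta(2\pi-\theta)\bigr)$, which exceeds $1$ exactly when $\delta^2<1$, so all real roots are simple; combined with the endpoint inequality $|\sin(\pi(i\pm\tfrac12))|=1>|\delta\sin(\cdot)|$, which confines the branch to $(i-\tfrac12,i+\tfrac12)$ and hence keeps it bounded, and with the no-crossing argument at $z=i$ (which would force $\sin(i(\pi-\theta))=0$, impossible by irrationality), the open--closed continuation gives items 1--4 on $(0,1)$.

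Two points to tighten. First, the clause ``by simplicity it coincides with the continued branch'' is not justified: simplicity does not preclude several roots of $F^\pm(\cdot,\delta)$ in $[i-\tfrac12,i+\tfrac12]$, so the IVT root need not be the continued branch. This claim is also unnecessary; the endpoint estimate should be used only as the confinement/boundedness ingredient feeding the continuation argument, not to identify the branch. Second, item 1 of the statement asks for the squared identity for all $\delta\in[0,1]$; your argument delivers it on $[0,1)$ (at $\delta=0$ via item 3), and, as your own remark notes, the non-degeneracy estimate collapses at $\delta=1$, so the endpoint $\delta=1$ would require a separate limiting argument (boundedness plus passage to a limit along a subsequence) or should simply be read as a looseness in the quoted statement. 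Neither point affects the way the lemma is used in the paper, which only needs the branches on $(0,1)$.
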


The following lemma extends the domain of definition of the 
functions $\beta_{i,j}$, $j=1,2$, 
to some open subset $S_{i,j}^{1}$ containing $R_1$.

\begin{lemma}
Suppose $\theta_1,$ $\theta_2$ and $\theta_3$ are positive numbers
summing to $2\pi$, and $\theta_{1}/\pi$, $\theta_{2}/\pi$, and $\theta_{3}/\pi$ 
are irrational numbers. 
Suppose that $\beta_{i,j}$ are defined as above for $i=1,2,\dots$ and $j=1,2$. 
For $a\in (0,1)$, 
the function $\beta_{i,j}$ satisfies
\begin{align}
\alpha(a,0,-a;\beta_{i,j})= 0.
\end{align}
Moreover, there exists a unique extension of 
$\beta_{i,j}$ to an analytic function of $(a,b)$ 
on an open neighborhood $R_{1}\subset S_{i,j}^{1} \subset (-1,1)^2$ 
which satisfies
\begin{align}
\alpha(a,b,c(a,b);\beta_{i,j}) = 0 \, .
\end{align}
\end{lemma}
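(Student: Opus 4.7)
The plan is two-pronged: verify the identity on $R_1$ by direct substitution, then use the analytic implicit function theorem to obtain the extension.

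On $R_1$, we have $b = 0$ and $c(a,0) = -a$. Substituting into~\cref{eq:defalpha}, the $b$-weighted terms drop and $ac = -a^2$, leaving the factorization
\[
\alpha(a, 0, -a; \beta) = \sin^2(\pi \beta) - a^2 \sin^2(\beta(\pi - \theta_3)) = G_{+}(a, \beta)\, G_{-}(a, \beta),
\]
where $G_{\pm}(a, \beta) := \sin(\pi\beta) \pm a \sin(\beta(\pi - \theta_3))$. The zero sets of $G_{+}$ and $G_{-}$ are precisely the graphs of $\beta_{i,1}(\cdot, 0)$ and $\beta_{i,2}(\cdot, 0)$ respectively, which proves the first claim.

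For the extension, let $F(a, b, \beta) := \alpha(a, b, c(a,b); \beta)$, a real-analytic function on $(-1, 1)^2 \times \mathbb{R}$. To invoke the analytic implicit function theorem at each point $(a_0, 0, \beta_{i,j}(a_0, 0))$ with $a_0 \in (0, 1)$, I must verify $\partial_\beta F(a_0, 0, \beta_{i,j}(a_0, 0)) \neq 0$. Since $F|_{b = 0} = G_{+} G_{-}$, on the zero locus of $G_{+}$ (the $\beta_{i,1}$ branch) we have $\partial_\beta F = G_{-} \, \partial_\beta G_{+}$, and analogously with the roles swapped on the $\beta_{i,2}$ branch. Hence it suffices to show (a) that the two branches do not intersect along $R_1$, and (b) that each branch's zero is simple. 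For (a), a simultaneous zero of $G_{+}$ and $G_{-}$ with $a > 0$ would force $\sin(\pi\beta) = 0$ and $\sin(\beta(\pi - \theta_3)) = 0$; combined with the irrationality of $\theta_3/\pi$ this pins $\beta = 0$, which is excluded by the strict inequalities $z_i^{-} < i < z_i^{+}$ from~\cref{lem_transm}. For (b), simplicity of the zero (equivalently $\partial_\beta G_{\pm} \neq 0$ on $G_{\pm} = 0$) follows from the analyticity of $z_i^{\pm}$ on $(0,1)$ asserted in~\cref{lem_transm}, since a double root would create a branch point and destroy single-valued analyticity of $z_i^{\pm}$ there.

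With $\partial_\beta F$ nonvanishing along the curve $\{(a, 0, \beta_{i,j}(a, 0)) : a \in (0,1)\}$, the analytic implicit function theorem supplies, for each $a_0 \in (0, 1)$, an open neighborhood $U_{a_0} \subset (-1,1)^2$ of $(a_0, 0)$ and a unique analytic function $\beta_{i,j}: U_{a_0} \to \mathbb{R}$ satisfying $F(\cdot, \cdot, \beta_{i,j}) \equiv 0$ and matching the original value at $(a_0, 0)$. Uniqueness forces these local patches to agree on overlaps, so they glue to a single analytic extension on $S_{i,j}^{1} := \bigcup_{a_0 \in (0, 1)} U_{a_0} \supset R_1$. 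The principal technical point is verifying nonvanishing of $\partial_\beta F$; once this is bootstrapped from the factorization, the irrationality hypothesis, and the prior analyticity of the $z_i^{\pm}$, the rest is a routine application of the implicit function theorem.
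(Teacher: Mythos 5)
Your outline agrees with the paper's: verify the identity on $R_1$ by restricting to $b=0$, then apply the analytic implicit function theorem after checking that $\partial_\beta$ of $\alpha$ does not vanish along the branch. The first claim and the gluing of local IFT patches are fine. The genuine gap is in your verification of the IFT hypothesis, specifically point (b). The assertion that a double root of $G_{\pm}(a_0,\cdot)$ ``would create a branch point and destroy single-valued analyticity of $z_i^{\pm}$'' is not a valid inference: a multiple root is perfectly compatible with analytic, single-valued root branches (consider $(z-a)(z+a)$, where two analytic branches cross and produce a double root at $a=0$, or $(z-g(a))^2$, where the unique branch $z=g(a)$ is analytic yet every root is double). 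Since \cref{lem_transm} asserts only analyticity of $z_i^{\pm}$, simplicity of the zeros does not follow from it --- and this nonvanishing is precisely the nontrivial content of the lemma, so it cannot be bootstrapped from the cited result. Point (a) also has a smaller hole: the inequality $z_i^{-}<i$ does not by itself exclude $z_i^{-}(a_0)=0$ for the lower branch (one needs in addition that the branch cannot cross a nonzero integer together with a separate argument at $\beta=0$, or a bound such as $z_i^{\pm}>1/2$, which the paper imports from elsewhere but which is not in \cref{lem_transm} as stated).

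The paper closes exactly this step by an explicit computation rather than an appeal to analyticity: it multiplies $\frac{\partial\alpha}{\partial\beta}(a,0,-a;\beta_{i,j})$ by the auxiliary factor $g(a;\beta)=(\pi-\theta_3)a^2\sin(\beta(\pi-\theta_3))\cos(\beta(\pi-\theta_3))+\pi\sin(\pi\beta)\cos(\pi\beta)$ and uses the relation $\sin^2(\pi\beta_{i,j})=a^2\sin^2(\beta_{i,j}(\pi-\theta_3))$ to reduce the product to
\begin{equation*}
-2\sin^2(\pi\beta_{i,j})\left(\pi^2-a^2(\pi-\theta_3)^2-\bigl(\pi^2-(\pi-\theta_3)^2\bigr)\sin^2(\pi\beta_{i,j})\right),
\end{equation*}
whose bracket is at least $\pi^2(1-a^2)>0$ (since $\sin^2(\pi\beta_{i,j})\le a^2$ and $(\pi-\theta_3)^2\le\pi^2$), while $\sin(\pi\beta_{i,j})\neq 0$ because an integer value of $\beta_{i,j}$ would force $\sin(\beta_{i,j}(\pi-\theta_3))=0$, impossible for irrational $\theta_3/\pi$ when $a>0$ unless $\beta_{i,j}=0$. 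You can repair your factorized version in the same spirit: assume $G_{\pm}(a,\beta)=0$ and $\partial_\beta G_{\pm}(a,\beta)=0$ simultaneously, square both relations, eliminate the cosines, and you arrive at $\pi^2-a^2(\pi-\theta_3)^2-(\pi^2-(\pi-\theta_3)^2)\sin^2(\pi\beta)=0$, contradicting the same lower bound $\pi^2(1-a^2)>0$. In other words, simplicity must be proved by this trigonometric estimate (which is essentially the paper's computation in disguise), not inferred from the analyticity of $z_i^{\pm}$.
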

\begin{proof}
We begin by observing that for $j=1,2,$ $\beta_{i,j}$ satisfies
\begin{equation}\label{eqn:lem_an_eq1}
\alpha(a,0,-a;\beta_{i,j}) = 
-a^2\sin^2(\beta_{i,j} (\pi-\theta_3)) + \sin^2(\pi \beta_{i,j})=0 \, .
\end{equation}
$$\frac{\partial \alpha}{\partial \beta}(a,0,-a;
\beta_{i,j}) = 
2 \left( -(\pi-\theta_3)a^2\sin(\beta_{i,j} (\pi-\theta_3)) \cos(\beta_{i,j} 
(\pi-\theta_3)) + \pi\sin(\pi \beta_{i,j}) \cos(\pi \beta_{i,j})\right).$$
Upon multiplication by 
$$g(a;\beta) = (\pi-\theta_3)a^2\sin(\beta (\pi-\theta_3)) \cos(\beta 
(\pi-\theta_3)) + \pi\sin(\pi \beta) \cos(\pi \beta)$$
and using~\cref{eqn:lem_an_eq1} we get 
$$g(a;\beta_{i,j})
\frac{\partial \alpha}{\partial \beta}(a,0,-a;\beta_{i,j})= 
-2\sin^2(\pi\beta_{i,j}) 
\left(\pi^2-a^2(\pi-\theta_3)^2  -(\pi^2-(\pi-\theta_3)^2)\sin^2(\pi \beta_{i,j})\right) \, ,$$
which does not vanish for all $a>0$. 
Thus by the implicit function theorem,
there exists an analytic extension of $\beta_{i,j}$ 
to a neighborhood $(a,b)\in R_{1} \subset S_{i,j}^{1} \subset (-1,1)^2$ 
which satisfies
$\alpha(a,b,c(a,b);\beta_{i,j}) = 0$.
\end{proof}

The following theorem establishes 
the analyticity of the null vectors of $\Adir(a,b;\beta)$ 
in a neighborhood of $R_1$ when $\beta=\beta_{i,j}.$
\begin{theorem}
For each $j=1,2$, and $i=1,2,\dots$, the matrix
$\Adir(a,b,\beta_{i,j})$ defined in~\cref{eq:defAdir}
has a null-vector $\bv_{i,j}$ whose entries are analytic
functions of $(a,b)$ on $S_{i,j}^{1}$.
\end{theorem}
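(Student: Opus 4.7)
The plan is to construct $\bv_{i,j}$ as an appropriate column of the classical adjugate matrix. Set $M(a,b) := \Adir(a,b,\beta_{i,j}(a,b))$; since $\beta_{i,j}$ is analytic on $S_{i,j}^{1}$ and the entries of $\Adir$ are analytic functions of $(a,b,\beta)$, the matrix $M$ has entries that are analytic functions of $(a,b)$ on $S_{i,j}^{1}$. By the defining property of $\beta_{i,j}$, we have $\alpha(a,b,c(a,b);\beta_{i,j})\equiv 0$, so $\det M \equiv 0$ on $S_{i,j}^{1}$. The identity $M\cdot\mathrm{adj}(M) = \det(M)\,I \equiv 0$ then shows that every column of $\mathrm{adj}(M)$ is a null-vector of $M$, and the entries of $\mathrm{adj}(M)$ are $2\times 2$ minors of $M$, hence themselves analytic in $(a,b)$. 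Thus analyticity is automatic; the only thing one must prove is that some \emph{fixed} column of $\mathrm{adj}(M)$ is not identically zero on $S_{i,j}^{1}$.

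To verify this, I would restrict attention to the slice $R_{1}$, where $b=0$ and $c(a,0)=-a$. Along $R_{1}$, the matrix $M(a,0)$ becomes block-lower-triangular with top-left entry $\sin(\pi\beta_{i,j}(a,0))$ and trivial first row in positions $(1,2)$ and $(1,3)$. A direct computation of the $(2,2)$-minor (obtained by deleting row $2$ and column $2$) gives
\begin{equation}
\det M_{22}(a,0) \;=\; \sin^{2}\!\big(\pi\beta_{i,j}(a,0)\big),
\end{equation}
so the $(2,2)$ cofactor of $M$ along $R_{1}$ is precisely $\sin^{2}(\pi\beta_{i,j}(a,0))$. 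The main step is then to check that this quantity does not vanish for $a\in(0,1)$, i.e. that $\beta_{i,j}(a,0)$ never hits an integer. Lemma~\ref{lem_transm} guarantees $\beta_{i,j}(a,0)\neq i$ on $(0,1)$, and the irrationality of $\theta_{3}/\pi$ rules out any other integer value: if $\beta_{i,j}(a,0) = k\in\mathbb{Z}$ then the defining equation $\sin(\pi\beta) = \pm a\sin(\beta(\pi-\theta_{3}))$ forces $\sin(k(\pi-\theta_{3}))=0$, i.e. $k\theta_{3}\in\pi\mathbb{Z}$, contradicting the irrationality assumption unless $k=0$, and $k=0$ is also excluded since $\beta_{i,j}(a,0)\to i\geq 1$ as $a\to 0$ together with continuity.

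With the cofactor $\det M_{22}$ established to be a non-vanishing analytic function on $R_{1}$, analyticity extends the non-vanishing to an open neighborhood $R_{1}\subset \tilde S_{i,j}^{1}\subset S_{i,j}^{1}$, which we take as the new $S_{i,j}^{1}$ (shrinking if necessary). On $S_{i,j}^{1}$ I then define
\begin{equation}
\bv_{i,j}(a,b) \;=\; \bigl(-\det M_{21}(a,b),\ \det M_{22}(a,b),\ -\det M_{32}(a,b)\bigr)^{T},
\end{equation}
i.e. the second column of $\mathrm{adj}(M(a,b))$. Its entries are manifestly analytic on $S_{i,j}^{1}$, the vector is non-zero throughout because its middle component $\det M_{22}$ is, and the identity $M\,\mathrm{adj}(M)=0$ yields $\Adir(a,b,\beta_{i,j})\,\bv_{i,j}=0$, completing the construction.

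The main obstacle is not the analytic extension, which is essentially bookkeeping once one knows that some minor is nonzero, but the rank-lower-bound step, i.e. certifying that $\beta_{i,j}(a,0)$ avoids all integers on $(0,1)$. The verification above uses only the defining equation and the irrationality of $\theta_{3}/\pi$; if one were to weaken the irrationality hypothesis, alternative non-vanishing minors would have to be tracked, and the rank could conceivably drop to $1$ at isolated points, which is precisely the kind of exceptional locus referenced in the conjecture section of the paper.
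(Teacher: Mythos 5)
Your construction is correct and, at bottom, follows the same skeleton as the paper's own (very terse) proof: the whole matter reduces to showing that $\Adir(a,b,\beta_{i,j})$ has rank exactly two, so that the zero eigenvalue is simple and the null vector can be chosen to depend analytically on $(a,b)$. Where the paper merely asserts the rank bound and the analytic dependence, you make both concrete: the null vector is exhibited as a column of $\mathrm{adj}(M)$, whose entries are $2\times 2$ minors and hence automatically analytic, and the rank statement is certified along $R_1$ by the explicit cofactor $\sin^2(\pi\beta_{i,j}(a,0))$, whose non-vanishing rests on the non-integrality of $\beta_{i,j}(a,0)$ --- exactly the fact the paper implicitly uses (it is also what makes the implicit-function-theorem step of the preceding lemma go through). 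What your version buys is a formula and a precise localization of where the construction can fail, namely where the chosen cofactor vanishes and the rank drops, which is the exceptional locus alluded to in the conjecture section; the price is that you shrink $S_{i,j}^{1}$ to keep that cofactor nonzero, which is harmless since these sets are only ever used as open neighborhoods of $R_1$ (the paper's own argument tacitly needs the same restriction).

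Two small repairs. First, the second column of $\mathrm{adj}(M)$ is $\bigl(-\det M_{21},\ \det M_{22},\ -\det M_{23}\bigr)^{T}$; your third entry $-\det M_{32}$ is the $(3,2)$ cofactor, which sits in a different column of the adjugate, and the mixed vector you wrote need not satisfy $M\bv=0$ --- the intended column does, by $M\,\mathrm{adj}(M)=\det(M)\,I$. Second, excluding the integer value $0$ "by continuity from $\beta\to i$" only works for $i\geq 2$, where the intermediate value theorem would force a crossing of a nonzero integer already ruled out by irrationality; for $i=1$ on the decreasing branch you need one more line, e.g.\ divide the defining relation $\sin(\pi\beta)=\mp a\sin(\beta(\pi-\theta_3))$ by $\beta$ and let $\beta\to 0$ to get $\pi=\mp a(\pi-\theta_3)$, impossible since $a\in(0,1)$ and $|\pi-\theta_3|<\pi$; alternatively invoke the bound $\beta_{i,j}>1/2$ that the paper uses later in~\cref{lem:betahalf}. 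With these fixes the argument is complete.
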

\begin{proof}
Since $\beta_{i,j}$ is such that the matrix $\Adir(a,b,\beta_{i,j})$
is singular, it has a null vector $\bv_{i,j}$.
Moreover, as long as $(a,b) \neq (0,0)$ and $\beta_{i,j}$ 
is not an integer, the matrix $\Adir$ has rank at least $2$.
Thus $0$ is an eigenvalue of $\Adir(a,b,\beta_{i,j}(a,b))$ with
multiplicity $1$ for all $(a,b) \in S_{i,j}^{1}$.
Since the entries of the matrix $\Adir$ are analytic functions
of $(a,b)$, we conclude that the entries of $\bv_{i,j}$ are analytic
on $S_{i}^{1}$.
\end{proof}

Finally, each $S_{i,j}^{k}$ is an open subset containing the segments
$R^{k}$, $k=1,2,\ldots 6$.
Then $S_{i,j} = \cup_{k=1}^{6} S_{i,j}^{k}$ is an open subset of 
$(-1,1)^2$ containing $\cup_{k=1}^{6} R_{k}$. 
Thus, for any finite $N$, $|\cap_{i=0}^{N}\cap_{j=0}^{2} S_{i,j}|>0$.

\subsection{Completeness of density basis}
\label{subsec:smooth_onto}
Recall that for any $\beta,\bv$ which satisfy
the conditions of~\cref{thm_fwd}, and $\sigma = \bv t^{\beta}$,
the potential $\Kdir[\sigma]$ corresponding to any of these densities
is an analytic function. 
In order to show that, the potential corresponding to a particular
collection of $\beta,\bv$ span all polynomials of a fixed degree 
on all the three edges meeting at the triple junction, we 
explicitly write down the linear map 
from the coefficients of the density in the $\bv t^{\beta}$ basis
to the coefficients of Taylor series of the potentials on each of 
the edges using~\cref{lem:pot_fwd}.
We then observe that this mapping is invertible along 
the line segments corresponding to $a=0$, $b=0$, or
$c=0$, and since the mapping is an analytic function
of the parameters $(a,b)$, it must also be invertible
in an open region containing the segments $a=0$, $b=0$ or $c=0$.
This part of the proof is discussed in~\cref{subsec:smooth_onto}.

For any integer $N>0$, let $\hat{S}_{N}$, 
denote the common region of analyticity of $\beta_{i,j},\bv_{i,j}$,
$j=0,1,2$, $i=0,1,2\ldots N$, i.e.
$S_{N} = \cup_{k=1}^{6} S^{k}_{N}$, where
$S_{N}^{k} = \cap_{i=0}^{N} \cap_{j=0}^{2} S_{i,j}^{k}$. 
By construction, $R_{j} \subset S^{j}_{N}$ for all $N$.
We now prove the result~\cref{thm_inv} 
in one of the components of $S_{N}$, say $S_{N}^{1}$.
The proof for the other components follows in a similar manner. 

Let $\bp_{i} = [p_{i,0},p_{i,1},p_{i,2}]^{T}$, and 
suppose that 
\begin{equation}
\sigma(t) = \sum_{i=0}^{N} \sum_{j=0}^{2} 
p_{i,j} \bv_{i,j} |t|^{\beta_{i,j}} \, .
\end{equation}
Then, using~\cref{lem:pot_fwd}, 
since $\beta_{i,j},\bv_{i,j}$ are such that
$\Adir(a,b,\beta_{i,j}) \cdot \bv_{i,j} = 0$, 
the potential corresponding to this density on the
boundary $(\Gamma_{(1,2)},\Gamma_{(2,3)},\Gamma_{(3,1)})$
is given by
\begin{equation}
\begin{bmatrix}
u_{(1,2)}(t) \\
u_{(2,3)}(t) \\
u_{(3,1)}(t)
\end{bmatrix}= \sum_{i=0}^{N} \left(\sum_{j=0}^{N} B_{i,j} \cdot \bp_{j} \right) |t|^{i} 
+ O(|t|^{N+1})\, ,
\end{equation}
where
$B_{i,j}$ are the $3 \times 3$ matrices given by
\begin{equation}
B_{i,j} = 
\begin{cases}
\left[
\begin{array}{c;{2pt/2pt}c;{2pt/2pt}c}
\frac{1}{\beta_{j,0}-i}\bC(\bd,i) \bv_{j,0} &
\frac{1}{\beta_{j,1}-i}\bC(\bd,i) \bv_{j,1} &
\frac{1}{\beta_{j,2}-i}\bC(\bd,i) \bv_{j,2}
\end{array}
\right] & \quad \text{if } i\neq j \vspace*{1.5ex}\\
\left[
\begin{array}{c;{2pt/2pt}c;{2pt/2pt}c}
\bCdiag(\bd,i) \bv_{j,0} &
\frac{1}{\beta_{j,1}-i}\bC(\bd,i) \bv_{j,1} &
\frac{1}{\beta_{j,2}-i}\bC(\bd,i) \bv_{j,2}
\end{array}
\right] & \quad \text{if } i=j\neq 0 \vspace*{1.5ex}\\ 
\left[
\begin{array}{c;{2pt/2pt}c;{2pt/2pt}c}
\bCdiag(\bd,i) \bv_{j,0} &
\bCdiag(\bd,i) \bv_{j,1} &
\bCdiag(\bd,i) \bv_{j,2}
\end{array}
\right] & \quad \text{if } i=j=0 \\ 
\end{cases} \, .
\end{equation}
Let $\bB$ denote the $3(N+1)\times 3(N+1)$ matrix
whose $3\times 3$ blocks are given by $B_{i,j}$, 
$i,j=0,1,2,\ldots N$.

Recall that on $R_{1}\subset S_{N}^{1}$, $b=0$, 
$\beta_{i,0} = i$,
$\beta_{i,1}$,
satisfies $\spt{\beta_{i,1}} = -a \sin{(\beta_{i,1}(\pi-\theta_{3}))}$,
$\beta_{i,2}$ satisfies $\spt{\beta_{i,2}} = a \sin{(\beta_{i,2}(\pi-\theta_{3}))}$\,,
$i=0,1,2\ldots$, 
and the corresponding vectors $\bv_{i,j}$, $i=0,1,2\ldots$, 
$j=0,1,2$, are given by 
\begin{equation}
\bv_{i,0} = \frac{1}{\eta_{i}}\begin{bmatrix} \sin{(i\theta_{3})} \\
\sin{(i\theta_{1})} \\ \sin{(i\theta_{2})} 
\end{bmatrix} \, ,
\quad
\bv_{i,1} = \frac{1}{\sqrt{2}}\begin{bmatrix} 0 \\ 1 \\ 1 
\end{bmatrix} \, ,
\quad
\bv_{i,2} = \frac{1}{\sqrt{2}}\begin{bmatrix} 0 \\ 1 \\ -1 
\end{bmatrix} \, ,
\end{equation}
where
\begin{equation}
\eta_{i} = \sqrt{\sinsq{i\theta_{1}} + \sinsq{i\theta_{2}} + \sinsq{i\theta_{3}}}
\end{equation}
Furthermore, the matrices $\bC$ and $\bCdiag$ defined in~\cref{eq:defcsmooth},
and~\cref{eq:defcsmoothint} respectively, also simplify to
\begin{equation}
\bC = \frac{a}{2\pi} 
\begin{bmatrix}
0 & 0 & 0 \\
-\sin{(m\theta_{2})} & 0 & -\sin{(m\theta_{3})} \\
-\sin{(m\theta_{1})} & \sin{(m\theta_{3})} & 0
\end{bmatrix} \, ,
\end{equation}
and 
\begin{equation}
\bCdiag = -\frac{1}{2\pi} 
\begin{bmatrix}
\pi & 0 & 0 \\
a(\pi-\theta_{2}) \cos{(m\theta_{2})} &\pi  & -a(\pi-\theta_{3}) \cos{(m\theta_{3})} \\
a(\pi-\theta_{1}) \cos{(m\theta_{1})} & -a(\pi-\theta_{3})\cos{(m\theta_{3})} & \pi
\end{bmatrix}
\end{equation}

Let $u_{(1,2),i}, u_{(2,3),i}, u_{(3,1),i}$ denote
the coefficient of $|t|^{i}$ in the Taylor expansions of
$u_{(1,2)},u_{(2,3)},u_{(3,1)}$ respectively.
Let $P$ denote the permutation matrix whose action is given by
\begin{equation}
P 
\begin{bmatrix}
p_{0,0} \\ p_{0,1} \\ p_{0,2} \\ \hdashline[0.5pt/1pt]
p_{1,0} \\ p_{1,1} \\ p_{1,2} \\ \hdashline[0.5pt/1pt]
\vdots \\ \hdashline[0.5pt/1pt]
\vdots \\ \hdashline[0.5pt/1pt]
\vdots \\ \hdashline[0.5pt/1pt]
p_{N,0} \\ p_{N,1} \\ p_{N,2}
\end{bmatrix}
= 
\begin{bmatrix}
p_{0,0} \\ p_{1,0} \\
\vdots \\
p_{N,0} \\ \hdashline[0.5pt/1pt]
p_{0,1} \\ 
p_{1,1} \\ 
\vdots \\
p_{N,1} \\ \hdashline[0.5pt/1pt]
p_{0,2} \\
p_{1,2} \\
\vdots  \\
p_{N,2}
\end{bmatrix}
\end{equation}
Then along $R_{1}$, the matrix $PBP^{T}$ is demonstrated in~\cref{fig:smooth_decomp}.
\begin{figure}[!h]
\centering
\includegraphics[width=0.7\linewidth]{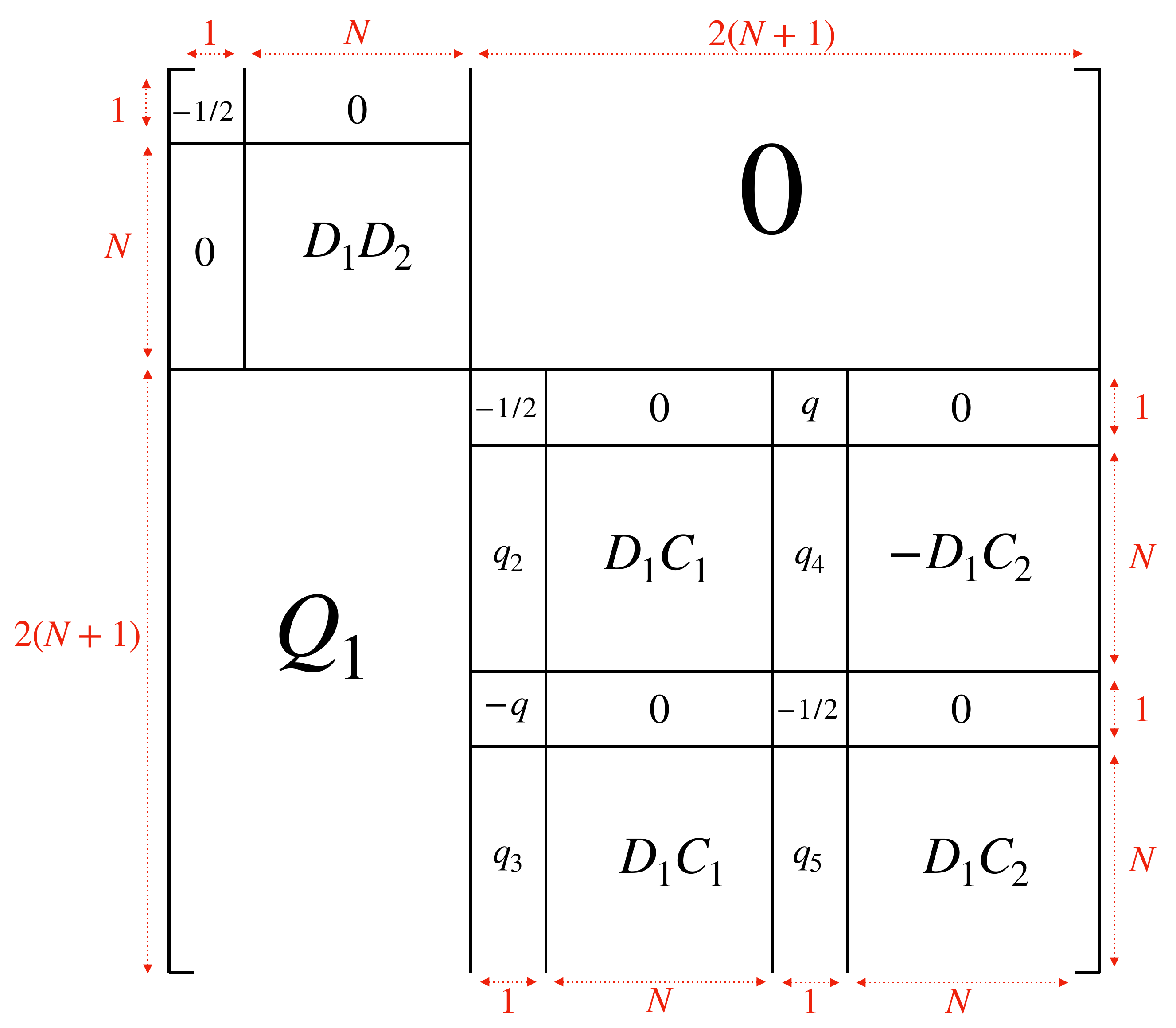}
\caption{Structure of the matrix $PBP^{T}$}
\label{fig:smooth_decomp}
\end{figure}
The matrices $D_{1},D_{2}$ are diagonal and are given by
\begin{equation}
D_{1} = \begin{bmatrix}
\sin{(\theta_{3})} & & & & \\
& \sin{(2 \theta_{3})} & & &  \\
& & \ddots & & \\
& & & \sin{((N-1) \theta_{3})} & \\
& & & & \sin{(N \theta_{3})}
\end{bmatrix} \, ,
\quad
D_{2} = -\frac{1}{2}\begin{bmatrix}
\eta_{1} & & & & \\
& \eta_{2} & & & \\
& & \ddots & & \\
& & & \eta_{N-1} & \\
& & & & \eta_{N}
\end{bmatrix} \, .
\end{equation}
The matrices $C_{1},C_{2}$ are Cauchy matrices whose entries are given by
\begin{equation}
C_{1,i,j} = \frac{1}{\beta_{i,1}-j} \, , C_{2,i,j} = \frac{1}{\beta_{i,2}-j} \, .
\end{equation}
Since we have assumed $\theta_{1}/\pi,\theta_{2}/\pi,\theta_{3}/\pi$, to be
irrational, we note that $\eta_{i}>0$ and that
$\sin{(m\theta_{3})} \neq 0$ for all $m\neq 0$. 
Thus, the diagonal matrices $D_{1},D_{2}$ are invertible.
Furthermore on $(a,0)$, neither of $\beta_{i,1}$ or $\beta_{i,2}$,
take on integer values~\cref{lem_transm}. 
Thus, the Cauchy matrices $C_{1},C_{2}$ are invertible.

Let $T$ denote the bottom-right $2(N+1)\times 2(N+1)$ block.
Then from the structure of $P\bB P^{T}$ and the fact that
the diagonal matrix $D_{1}D_{2}$ is invertible, it is clear that $B$ is invertible
if and only if $T$ is invertible. 
\begin{remark}
The matrix $T$ is the mapping from the coefficients of the singular
basis of solutions for the transmission problem with angle $\pi \theta_{3}$
and material parameter $a$ to the corresponding coefficients of
the Taylor expansion of the potential on the edges $(2,3),(3,1)$.
The invertibility of $T$ follows from the analysis 
in~\cite{hoskins2018numerical}.
We present the proof here in terms of the notation used in this paper.
\end{remark}

Upon applying an appropriate permutation matrix $P_{2}$
to $T$ from the right and the left, we note that 
\begin{equation}
P_{2} T P_{2}^{T} = 
\begin{bmatrix}
-1/2 & q & 0 & 0 \\
-q & -1/2 & 0 & 0 \\
q_{2} & q_{4} & D_{1}C_{1} & -D_{1}C_{2} \\
q_{3} & q_{5} & D_{1}C_{1} & D_{1} C_{2}
\end{bmatrix} \, .
\end{equation}
The matrix $P_{2}TP_{2}^{T}$ is invertible if 
and only if it's bottom right $2N \times 2N$ 
corner is invertible. 
Let $I_{N}$ denote the $N\times N$ identity matrix, 
then the bottom right corner of $P_{2}TP_{2}^{T}$ factorizes as
as 
\begin{equation}
\begin{bmatrix}
D_{1} & 0 \\
0 & D_{1} 
\end{bmatrix}
\begin{bmatrix}
I_{N} & -I_{N} \\
I_{N} & I_{N} 
\end{bmatrix}
\begin{bmatrix}
C_{1} & 0 \\ 
0 & C_{2}
\end{bmatrix} \, ,
\end{equation}
which is clearly invertible since the matrices $D_{1},C_{1},C_{2}$ 
are invertible.

Finally, using all of these results, it follows that the matrix $\bB$ 
is invertible for all $(a,0) = R_{1}$.
Since all of the quantities involved are analytic, on every compact subset
of $S^{1}_{N}$, we conclude that the matrix $\bB$ is invertible in
an open neighborhood $R_{1}\subset \tilde{S}^{1}_{N} \subset S^{1}_{N}$.
By construction $|\tilde{S}^{1}_{N}|>0$.

% flatex input end: [appendix-a.tex]

%%%%%%%%%%%%%%%%%%%%%%%%%%%%%%%%%%%%%%%%%%%%%%%%%%%%%%%%%%%%
% flatex input: [appendix-b.tex]
\section{Analysis of $\Kneu$ \label{sec:appendix-neu}}
All the proofs for the analysis of $\Kneu$ are similar to
the corresponding proofs of $\Kdir$.
We only present the analogs of~\cref{lem:dledge,lem:pot_fwd}.

In the following lemma we present the directional derivative 
of a single layer potential defined on straight line segment
with density $s^{\beta}$ at an arbitrary point near the boundary.
Here $s$ is the distance along the segment,
at an arbitrary point 
near the boundary. 
\begin{lemma}
\label{lem:dastedge}
Suppose that $\Gamma$ is an edge of unit length oriented
along an angle $\pi\theta$, parameterized by 
$s(\cost,\sint)$, $0<s<1$. 
Suppose that $\bx= t(\cos{(\theta+\theta_{0})}, \sin{(\theta+\theta_{0})}$
and $\bn = (-\sin{(\theta+\theta_{0})},\cos{(\theta+\theta_{0})})$
(see~\cref{fig_illus_ints})
where $0<t<1$, and $\bx \not \in \Gamma$.
Suppose that $\sigma(s) = s^{\beta-1}$ for $0<s<1$, where
$\beta \geq 1/2$.
If $\beta$ is not an integer, then
\begin{equation}
\nabla \cS[\sigma](\bx) \cdot \bn = 
-\frac{\sin{(\beta(\pi-\theta_{0}))}}{2\sinb}t^{\beta-1}
-\frac{1}{2\pi}\sum_{k=1}^{\infty} \frac{\sin{(k\theta_{0})}}{\beta-k} t^{k-1} \, .
\end{equation}
If $\beta=m$ is an integer, then
\begin{equation}
\nabla \cS_{\Gamma}[\sigma](\bx) \cdot \bn = -\frac{(\pi-\theta_{0})\cos{(m \theta_{0})}}{2\pi}t^{m-1}
+\frac{\sin{(m\theta_{0})}}{2\pi} t^{m-1} \log{(t)}
-\frac{1}{2\pi}\sum_{\substack{k=1\\k \neq m}}^{\infty} 
\frac{\spt{k\theta_{0}}}{m-k} t^{k-1} \, .
\end{equation}
\end{lemma}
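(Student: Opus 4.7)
The plan is to reduce Lemma~\ref{lem:dastedge} to the already-proved Lemma~\ref{lem:dledge} by identifying the gradient-of-single-layer kernel against the same scalar integral that appears in the double-layer calculation. Specifically, starting from the integral representation
\begin{equation}
\nabla \cS_{\Gamma}[\sigma](\bx)\cdot\bn \;=\; \frac{1}{2\pi}\int_{0}^{1}\frac{\bn\cdot(\by(s)-\bx)}{\|\by(s)-\bx\|^{2}}\,\sigma(s)\,ds,
\end{equation}
with $\by(s)=s(\cost,\sint)$, I would first carry out the elementary trigonometric computation
\begin{equation}
\bn\cdot(\by(s)-\bx) \;=\; -s\spt{\theta_{0}/\pi}\; \text{(i.e.\ } -s\sin\theta_{0}\text{)},\qquad \|\by(s)-\bx\|^{2}=s^{2}+t^{2}-2st\cos\theta_{0},
\end{equation}
which uses only the definitions of $\bx$ and $\bn$ given in the statement. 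Substituting $\sigma(s)=s^{\beta-1}$ produces an extra factor of $s$ in the numerator, so the integrand reduces to $s^{\beta}/(s^{2}+t^{2}-2st\cos\theta_{0})$ times the constant $-\sin\theta_{0}$.

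Next, I would compare this with the integral that appeared in the proof of Lemma~\ref{lem:dledge}. There the kernel carries the source-normal $\bn(\by)=(-\sint,\cost)$, and the analogous computation yields $\bn(\by)\cdot(\bx-\by)=t\sin\theta_{0}$, so with density $s^{\beta}$ one obtains
\begin{equation}
\cD_{\Gamma}[s^{\beta}](\bx)\;=\;\frac{t\sin\theta_{0}}{2\pi}\int_{0}^{1}\frac{s^{\beta}}{s^{2}+t^{2}-2st\cos\theta_{0}}\,ds.
\end{equation}
Hence the two potentials are related by the remarkably simple identity
\begin{equation}
\nabla\cS_{\Gamma}[s^{\beta-1}](\bx)\cdot\bn \;=\; -\frac{1}{t}\,\cD_{\Gamma}[s^{\beta}](\bx).
\end{equation}

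With this identity in hand, the lemma follows immediately by dividing the non-integer expansion of Lemma~\ref{lem:dledge} by $-t$: the $t^{\beta}$ leading term becomes $-t^{\beta-1}\sin(\beta(\pi-\theta_{0}))/(2\sinb)$, and each $t^{k}$ in the remainder series becomes $-t^{k-1}$, yielding the claimed formula. In the integer case $\beta=m$, the same division carries the $t^{m}\log(t)$ term to $t^{m-1}\log(t)$ (with the sign flip absorbed into the stated $+\sin(m\theta_{0})/(2\pi)$ coefficient), converts the $(\pi-\theta_{0})\cos(m\theta_{0})/(2\pi)\,t^{m}$ term to the corresponding $t^{m-1}$ term, and shifts every $t^{k}$ in the sum to $t^{k-1}$; no additional analytic work is required.

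The only real obstacle is bookkeeping: one has to verify the sign in $\bn\cdot(\by-\bx)=-s\sin\theta_{0}$ (versus $+t\sin\theta_{0}$ for $\bn(\by)\cdot(\bx-\by)$) and track the $1/t$ cleanly across the non-integer, integer-smooth, and integer-logarithmic contributions. If one preferred a self-contained derivation rather than reduction to Lemma~\ref{lem:dledge}, the technical core would be evaluating $\int_{0}^{1}s^{\beta}(s^{2}+t^{2}-2st\cos\theta_{0})^{-1}ds$ by partial fractions over the complex poles $s=te^{\pm i\theta_{0}}$ followed by geometric series expansion on $\{s<t\}$ and $\{s>t\}$, with the logarithmic case arising from the double pole when $\beta$ coincides with an integer exponent in the expansion; but the reduction above sidesteps this and is much shorter.
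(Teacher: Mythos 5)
Your reduction is correct, and it takes a genuinely different (and shorter) route than the paper, which offers no separate argument for this lemma at all: the appendix simply states it as the Neumann analog of \cref{lem:dledge}, the implied proof being a parallel direct evaluation of the integral $\int_0^1 s^{\beta}(s^2+t^2-2st\cos\theta_0)^{-1}\,ds$ (the partial-fraction/series computation you mention as the fallback). Your observation that, with the target direction $\bn$ given in the statement, one has $\bn\cdot(\by(s)-\bx)=-s\sin\theta_0$ and $\bn\cdot\bx=0$, so that
\begin{equation}
\nabla\cS_{\Gamma}[s^{\beta-1}](\bx)\cdot\bn \;=\; -\frac{\sin\theta_0}{2\pi}\int_0^1\frac{s^{\beta}\,ds}{s^2+t^2-2st\cos\theta_0}\;=\;-\frac{1}{t}\,\cD_{\Gamma}[s^{\beta}](\bx),
\end{equation}
checks out, and dividing the two expansions of \cref{lem:dledge} by $-t$ reproduces both the non-integer and the integer ($\log$) cases of \cref{lem:dastedge} verbatim, including signs. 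What your approach buys is that the Neumann-side kernel computation is never repeated and the integer/logarithmic case needs no new analysis; what the paper's implicit approach buys is independence from \cref{lem:dledge} and from the orientation convention hidden in it. That convention is the one point to make explicit in a writeup: the identity above carries the factor $-1/t$ only if the double layer in \cref{lem:dledge} uses the source normal obtained by rotating the tangent by $+\pi/2$ (so that $\bn(\by)\cdot(\bx-\by)=+t\sin\theta_0$), which is the convention consistent with the stated formula of \cref{lem:dledge} and with the target normal given here, though it is opposite to the perp convention $(x_1,x_2)^{\perp}=(x_2,-x_1)$ written in the remark of Section 2; you flagged exactly this bookkeeping, and once it is pinned down the proof is complete.
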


In the following lemma, 
we compute the potential
at a triple junction with angles $\pi \theta_{1},\pi \theta_{2}, 
\pi \theta_{3}$, and material parameters $\bd = 
(d_{(1,2)},d_{(2,3)},d_{(3,1)})$
(see~\cref{fig:trip_geom}).

\begin{lemma}
\label{lem:pot_fwd_neu}
Consider the geometry setup of the single vertex problem
presented in~\cref{sec:mainres}.
For a constant vector $\bv \in \mathbb{R}^{3}$, suppose 
that the density on the edges is of the form
\begin{equation}
\sigma =
\begin{bmatrix}\sigma_{1,2} \\ \sigma_{2,3} \\ \sigma_{3,1}
\end{bmatrix} = 
\bw t^{\beta-1}
\end{equation}
If $\beta$ is not an integer, then 
\begin{equation}
\Kdir[\sigma] = 
-\frac{1}{2\sinb} \Aneu(d_{3,1},d_{1,2},\beta) \bw
t^{\beta}
- \sum_{k=1}^{\infty} \frac{1}{\beta-k}\bC(\bd,k) \bw 
t^{k-1} \, ,
\end{equation}
where
$\Aneu$ is defined in~\cref{eq:defAneu}, and $\bC(\bd,k)$
is defined in~\cref{eq:defcsmooth}.
If $\beta = m$ is an integer, then
\begin{equation}
\begin{aligned}
\Kneu[\sigma] &= 
-\frac{(-1)^{m}}{2\pi}\Aneu(d_{3,1},d_{1,2},m) \bw
t^{m} \log{(t)}
- \sum_{\substack{k=1\\k\neq m}}^{\infty} \frac{1}{m-k}\bC(\bd,k)\bw t^{k-1} - 
\bCdiag(\bd,m) \bw t^{m-1}\, ,
\end{aligned}
\end{equation}
where $\bCdiag$ is defined in~\cref{eq:defcsmoothint}.
\end{lemma}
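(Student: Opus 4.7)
The plan is to mirror exactly the proof strategy of~\cref{lem:pot_fwd}, substituting~\cref{lem:dastedge} for~\cref{lem:dledge} at each step. Concretely, I would expand
\begin{equation*}
\Kneu[\sigma]_{(i,j)} = -\tfrac{1}{2}\,w_{(i,j)}\,t^{\beta-1} + d_{(i,j)} \sum_{(\ell,m)\in \mathcal{J}} \cD^{*}_{(\ell,m);(i,j)}\bigl[w_{(\ell,m)}\,t^{\beta-1}\bigr]\,,
\end{equation*}
and evaluate each cross-edge contribution $\cD^{*}_{(\ell,m);(i,j)}[w_{(\ell,m)} t^{\beta-1}]$ individually via~\cref{lem:dastedge}. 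The diagonal terms $\cD^{*}_{(i,j);(i,j)}$ vanish identically since the kernel $\bn(\by)\cdot(\bx-\by)$ is zero when $\bx,\by$ lie on the same straight segment. Thus only the six off-diagonal pairs contribute.

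For each off-diagonal pair I would identify the angle $\theta_0$ appearing in~\cref{lem:dastedge}, which is the angle between the source edge $\Gamma_{(\ell,m)}$ (parameterized outward from the triple junction) and the ray from the triple junction along the target edge $\Gamma_{(i,j)}$. Because $\theta_1,\theta_2,\theta_3$ sum to $2\pi$ and the normal $\bn_{(i,j)}$ points into the appropriate adjacent region, these six angles produce precisely the entries $\pi-\theta_1$, $\pi-\theta_2$, $\pi-\theta_3$, $\theta_1$, $\theta_2$, $\theta_3$ already tabulated in the $\Kdir$ analysis. The only change relative to the $\Kdir$ bookkeeping is that the normal in $\cD^{*}$ lives at the target point $\by$ rather than at the source $\bx$, which flips the sign in the $\bn\cdot(\bx-\by)$ numerator; this is exactly the sign flip between the first formula of~\cref{lem:dledge} and that of~\cref{lem:dastedge}, and is also the sign flip separating the off-diagonal entries of $\Aneu$ in~\cref{eq:defAneu} from those of $\Adir$ in~\cref{eq:defAdir}.

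Having computed each of the six contributions, I would collect coefficients. The $t^{\beta-1}$ terms (or $t^{m-1}\log(t)$ terms in the integer case $\beta=m$) assemble row-by-row into the product $\tfrac{-1}{2\sin(\pi\beta)}\Aneu(d_{(3,1)},d_{(1,2)},\beta)\,\bw$ (respectively $\tfrac{-(-1)^m}{2\pi}\Aneu(d_{(3,1)},d_{(1,2)},m)\,\bw$), where the diagonal $\sin(\pi\beta)$ entries come from the $-\tfrac12 w_{(i,j)}$ jump term combined with the prefactor $\sin(\pi\beta)/(2\sin(\pi\beta))$ times $-2\sin(\pi\beta)$, exactly paralleling the identity used in~\cref{lem:pot_fwd}. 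The smooth polynomial remainder $\sum_{k\neq\beta}\frac{-1}{\beta-k}\sin(k\theta_0)t^{k-1}/(2\pi)$ from~\cref{lem:dastedge} assembles into $-\sum_{k}\tfrac{1}{\beta-k}\bC(\bd,k)\bw\,t^{k-1}$, where the matrix $\bC(\bd,k)$ is the same one from~\cref{eq:defcsmooth}: observe that the $\sin(k\theta_0)$ entries depend only on the geometric angle at the junction and not on whether the kernel is $\cD$ or $\cD^{*}$, hence $\bC$ is unchanged. In the integer case, the $k=m$ term is replaced by $-\bCdiag(\bd,m)\bw\,t^{m-1}$ arising from the $(\pi-\theta_0)\cos(m\theta_0)/(2\pi)$ contribution in~\cref{lem:dastedge}.

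The only real obstacle is sign discipline: systematically verifying that the six off-diagonal entries of $\Aneu$ have signs opposite to those of $\Adir$ while the diagonal $\sin(\pi\beta)$ entries remain the same, and that the smooth coefficient matrix $\bC(\bd,k)$ and diagonal matrix $\bCdiag(\bd,m)$ pick up the correct overall minus sign (relative to the $\Kdir$ expansion) coming from the $\by$-versus-$\bx$ swap in the normal. Once the orientation and signs are tabulated once, the remaining assembly is mechanical and identical in form to the proof of~\cref{lem:pot_fwd}.
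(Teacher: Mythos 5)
Your proposal is correct and follows essentially the same route as the paper, whose proof is simply the repeated application of~\cref{lem:dastedge} to the cross-edge terms $\cD^{\ast}_{(\ell,m);(i,j)}[\sigma_{(\ell,m)}]$ in direct analogy with~\cref{lem:pot_fwd}. Your additional bookkeeping of the angles, the sign flip from the normal at the target point, and the assembly into $\Aneu$, $\bC$, and $\bCdiag$ are exactly the details the paper leaves implicit.
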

\begin{proof}
The result follows from repeated application of~\cref{lem:dastedge}
for computing $\cD^{\ast}_{(l,m):(i,j)} \sigma_{(i,j)}$.
\end{proof}
The proof of~\cref{thm_fwd_neu} then follows immediately 
from~\cref{lem:pot_fwd_neu}.

In the following lemma, we prove that $\beta_{i,j},S_{i,j}$, 
$i=1,2,\ldots $, $j=0,1,2$, defined in~\cref{subsec:beta_v_existence}
satisfy $\beta_{i,j}(a,b)>1/2$ for all $(a,b)$ in an open subset 
$T_{i,j}\subset S_{i,j}$.

\begin{lemma}
\label{lem:betahalf}
Suppose that $\beta_{i,j},S_{i,j}$, $i=1,2,\ldots$, $j=0,1,2$,
are as defined in~\cref{subsec:beta_v_existence}.
Then there exists an open subset $T_{i,j}\subset S_{i,j}$,
such that $\beta_{i,j}(a,b)>1/2$ for all $(a,b) \in T_{i,j}$. 
Moreover for any $N>0$, $\cap_{i=1}^{N+1}\cap_{j=0}^{2} |T_{i,j}|>0$.
\end{lemma}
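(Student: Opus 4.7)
The plan is to split the lemma into the trivial case $j=0$ and the non-trivial case $j\in\{1,2\}$, and then use a uniform minimum over the finite index set to establish the simultaneous-intersection claim.

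For $j=0$, the construction in~\cref{subsec:beta_v_existence} assigns $\beta_{i,0}(a,b) \equiv i$ on $S_{i,0} = (-1,1)^2$, so the inequality $\beta_{i,0} > 1/2$ holds trivially for every $i\geq 1$, and I would simply take $T_{i,0} = S_{i,0}$.

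For $j=1,2$ the essential ingredient is the boundary limit $\lim_{a \to 0^+} \beta_{i,j}(a,0) = i$ furnished by~\cref{lem_transm} along the segment $R_1 \subset S_{i,j}^1$. Because $i\geq 1 > 1/2$, there is some $\delta_{i,j} > 0$ with $\beta_{i,j}(a,0) > 1/2$ for all $a \in (0,\delta_{i,j})$. Since $\beta_{i,j}$ is analytic on the open set $S_{i,j}^1 \subset S_{i,j}$, the preimage
\begin{equation*}
T_{i,j} := \{(a,b) \in S_{i,j}^1 : \beta_{i,j}(a,b) > 1/2\}
\end{equation*}
is open, and by the previous estimate contains at least the non-trivial arc $\{(a,0): 0<a<\delta_{i,j}\}$, so it is non-empty.

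Finally, to obtain $\left|\bigcap_{i=1}^{N+1}\bigcap_{j=0}^{2} T_{i,j}\right| > 0$, I would exploit the finiteness of the index set: setting $\delta := \min_{(i,j)} \delta_{i,j} > 0$ over the finite collection $1\leq i\leq N+1$, $j=1,2$ and fixing any $a_0 \in (0,\delta)$, every $\beta_{i,j}$ in the collection strictly exceeds $1/2$ at $(a_0,0)$. Continuity of each $\beta_{i,j}$ on a common open neighborhood of $(a_0,0)$ then supplies an open ball around $(a_0,0)$ lying in $\bigcap_{i,j} T_{i,j}$, and any such ball has positive Lebesgue measure. The only point requiring any care is verifying that the limiting value $i$ strictly exceeds $1/2$, which is automatic under the hypothesis $i\geq 1$; apart from this, the argument is a direct consequence of the analyticity established in~\cref{subsec:beta_v_existence} together with~\cref{lem_transm}, and no substantial obstacle is expected.
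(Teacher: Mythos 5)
Your proof is correct and follows essentially the same route as the paper: use the values of $\beta_{i,j}$ on the material-parameter segments supplied by~\cref{lem_transm}, invoke analyticity/continuity to pass to open sets, and use finiteness of the index set to get positive measure of the intersection. The only (harmless) difference is that you localize near $a\to 0^+$ on $R_1$ and take a ball around a single common point, whereas the paper takes $T_{i,j}$ to be an open neighborhood of the entire union of segments $a=0$, $b=0$, $c=0$; your localization even avoids having to check $\beta_{i,j}>1/2$ along the full segments.
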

\begin{proof}
Since $\beta_{i,0} = i$, the statement is trivially true 
with $T_{i,j} = (-1,1)^2$.
Since $\beta_{i,j} = z_{i}^{\pm}(\delta,\theta)$ 
on $a=0,b=0$, or $c=0$, for appropriate parameters
$\delta,\theta$, we conclude that $\beta_{i,j}>1/2$,
on $a=0,b=0$, or $c=0$, for $i=1,2,\ldots$, $j=1,2$.
Since $\beta_{i,j}$ are analytic on $S_{i,j}$, 
there exists an open subset containing the segments
$a=0,b=0$, or $c=0$, which we denote by $T_{i,j}$,
such that $\beta_{i,j}(a,b)>1/2$ for all
$(a,b) \in T_{i,j}$.
Since each $T_{i,j}$ is an open subset of $(-1,1)^2$, 
containing $\cup_{k=1}^{6} R_{k}$, we conclude that
$|\cap_{i=1}^{N+1}\cap_{j=0}^{2} T_{i,j}|>0$.

\end{proof}

% flatex input end: [appendix-b.tex]

%%%%%%%%%%%%%%%%%%%%%%%%%%%%%%%%%%%%%%%%%%%%%%%%%%%%%%%%%%%%

%*flatex input: [main.bbl]

% flatex input end: [main.bbl]
%FLATEX-REM:\bibliographystyle{ieeetr}
%FLATEX-REM:\bibliography{refs}

\end{document}